\newcommand{\footremember}[2]{%
    \footnote{#2}
    \newcounter{#1}
    \setcounter{#1}{\value{footnote}}%
}
\newtheorem{theorem}{Theorem}[section]
\newtheorem{lemma}[theorem]{Lemma}
\newtheorem{proposition}[theorem]{Proposition}
\theoremstyle{remark}
\newtheorem{remark}[theorem]{Remark}
\newtheorem{notation}[theorem]{Notation}
\theoremstyle{definition}
\newtheorem{definition}[theorem]{Definition}
\newtheorem{example}[theorem]{Example}
\newtheorem*{thm*}{Theorem}
\newtheorem*{cor*}{Corollary}
\newtheorem*{conj*}{Conjecture}
\theoremstyle{remark}
\newtheorem{step}{Step}
\newcounter{notes}%
\DeclareFontFamily{OMX}{yhex}{}
\DeclareFontShape{OMX}{yhex}{m}{n}{<->yhcmex10}{}
\DeclareSymbolFont{yhlargesymbols}{OMX}{yhex}{m}{n}
\DeclareMathAccent{\wip}{\mathord}{yhlargesymbols}{"F3}
\title{Polyhedral realisations of finite arc complexes using strip deformations}
\author{%
  François Guéritaud\footremember{stras}{Université de Strasbourg}%
  \and Pallavi Panda\footremember{uspn}{Université Paris 13}
  }
\date{}
\begin{document}
\maketitle

\abstract{ 
We study infinitesimal deformations of complete hyperbolic surfaces with boundary and with ideal vertices, possibly decorated with horoballs.
``Admissible'' deformations are the ones that pull all horoballs apart; they form a convex cone of deformations.
We describe this cone in terms of the arc complex of the surface: specifically, this paper focuses on the surfaces for which that complex is finite.
Those surfaces form four families: (ideal) polygons, once-punctured polygons, one-holed polygons (or ``crowns''), and M\"obius strips with spikes.
In each case, we describe a natural simplicial decomposition of the projectivized admissible cone and of each of its faces, realizing them as appropriate arc complexes.
}

\section{Introduction}

A hyperbolic surface $\Pi$ comes with a deformation space $\mathfrak{D}(\Pi)$, sometimes called its Fricke-Klein space (or Teichm\"uller space in the context of conformal geometry), the space of hyperbolic metrics on $\Pi$ up to isotopy. 
This space $\mathfrak{D}(\Pi)$ has been known since at least the time of Koebe and Poincaré to be diffeomorphic to a smooth open ball, whose dimension depends on the topological type of $\Pi$. 
It carries a wealth of geometric structures, both differential and large-scale, that have been studied by many authors.

We may also talk about \emph{infinitesimal} deformations of a hyperbolic metric on $\Pi$. 
While falling squarely in the differential study of $\mathfrak{D}(\Pi)$ (see e.g.\ Kerckhoff's solution to the Nielsen realization problem~\cite{kerck}), this idea has also proven crucial, for instance, in the study of $3$-manifolds with complete flat Lorentzian metrics~\cite{glm}, \cite{dgk}. 
The hyperbolic surfaces $\Pi$ of interest in that context are typically compact with geodesic boundary, and the deformations that give rise to ``good'' Lorentzian metrics are the ones that \emph{infinitesimally lengthen} every geodesic loop in~$\Pi$, in a uniform sense. 
These are traditionally called \emph{admissible} deformations;  they form a convex cone in any fiber of $\mathrm{T} \mathfrak{D}(\Pi)$.

The main result of \cite{dgk} is a realization theorem of admissible deformations in terms of the \emph{arc complex} of $\Pi$. This clique simplicial complex, of dimension one less than $\mathfrak{D}(\Pi)$, first introduced by Harvey \cite{Harvey}, is naturally defined in terms of the topology of $\Pi$. In general, it is an infinite complex without any nice piece-wise linear manifold structure. Nonetheless, Hatcher \cite{hatcher} proved that it is always contractible. Furthermore, Harer \cite{harer} proved that there exists an open dense subset, called the pruned arc complex, that is homeomorphic to an open ball. The latter is used in \cite{dgk} to parametrize admissible deformations up to scaling.


\smallskip

In this paper we study those hyperbolic surfaces $\Pi$ that give rise to \emph{finite} arc complexes, and interpret these complexes precisely in terms of a generalized notion of admissible deformation. 
In order to capture many finite cousins of the associahedron, we must allow the hyperbolic surface $\Pi$ to have so-called ``spikes'', i.e.\ regions isometric to a corner of an ideal hyperbolic polygon.
An ideal polygon in $\HP$ is itself a possible (and important) case of such a surface~$\Pi$, hence our somewhat unusual choice of letter.
We will also ``decorate'' some of these spikes with \emph{horoballs}: the horoball data is seen as part and parcel of the ``metric'' $m$ on~$\Pi$. This was first introduced by Penner in his work on Decorated Teichmüller Theory \cite{Pennerbordered},\cite{Pennerpunc}. 
A deformation of $m$, or vector of $\mathrm{T}_m \mathfrak{D}(\Pi)$, will be called admissible if it lengthens not only every loop, but also every inter-horoball distance. 
(This will be our working definition of admissibility, mostly ignoring here the connections to Lorentzian geometry.)
Our main theorem is:
\begin{theorem}\label{thm:mainA}
Let $\Pi$ be a (decorated) surface with finite arc complex, and $m\in \mathfrak{D}(\Pi)$ a hyperbolic metric. 
There is a natural parametrization of the closed polyhedron of (weakly) admissible deformations 
$\padm$
in the positive projectivization  $\mathbb{P}^+ \mathrm{T}_m\mathfrak{D}(\Pi)$, by an appropriate arc complex in $\Pi$.
Moreover, there is an explicit description of the faces of $\padm$ in terms both of lengthening properties, and of arc subcomplexes.
\end{theorem}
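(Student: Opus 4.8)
The plan is to realise $\padm$ as the image of an explicit \emph{strip deformation map} and then to read off its faces from the combinatorics of arcs. First I would attach to every vertex $\alpha$ of the relevant (decoration-aware) arc complex $\mathcal{A}(\Pi)$ its infinitesimal strip deformation $f_\alpha\in\mathrm{T}_m\mathfrak{D}(\Pi)$, obtained by cutting $\Pi$ along $\alpha$ and regluing with an infinitesimally thin hyperbolic strip inserted, as in~\cite{dgk}. The key input is the length-variation formula, proved there for closed geodesics and to be extended here to inter-horoball distances: for any geodesic representative $\gamma$ of a loop or of an arc between two horoballs one has $d\ell_\gamma(f_\alpha)\ge 0$, with equality if and only if $\gamma$ is disjoint from $\alpha$. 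Hence, for every simplex $\{\alpha_1,\dots,\alpha_k\}$ of $\mathcal{A}(\Pi)$ (pairwise disjoint arcs) and all weights $w_i>0$, the vector $\sum_i w_i f_{\alpha_i}$ is weakly admissible, and $\Phi\big([\sum_i w_i\alpha_i]\big):=[\sum_i w_i f_{\alpha_i}]$ defines a continuous map $\Phi\colon|\mathcal{A}(\Pi)|\to\padm$ that is projective-linear on each simplex.

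The next step is injectivity of $\Phi$ together with the dictionary between faces and subcomplexes. For each arc $\alpha$ I would exhibit a \emph{dual} test curve $\gamma_\alpha$ — a loop or an inter-horoball arc — that crosses $\alpha$ but is disjoint from every arc disjoint from $\alpha$. Since $d\ell_{\gamma_\alpha}(f_\beta)=0$ exactly when $\beta$ is disjoint from $\alpha$, the strip vectors of any simplex are linearly independent, and the collection of test curves that are \emph{not} strictly lengthened by an image vector $v=\sum_i w_i f_{\alpha_i}$ recovers the support $\{\alpha_i\}$. This gives injectivity, and simultaneously shows that the face of $\padm$ obtained by turning a prescribed family $\mathcal{C}$ of lengthening inequalities into equalities is carried by $\Phi$ onto the subcomplex of $\mathcal{A}(\Pi)$ spanned by the arcs disjoint from all of $\mathcal{C}$; that subcomplex is the (decoration-aware) arc complex of the subsurface(s) cut out by $\mathcal{C}$, again a member (or a disjoint union of members) of the four families, which is what lets the statement close up under passage to faces.

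The heart of the argument is surjectivity onto the \emph{closed} polyhedron $\padm$. As $\mathcal{A}(\Pi)$ is finite, I would combine a dimension count with a properness argument. By Harvey/Harer-type facts and direct inspection of the four families, $|\mathcal{A}(\Pi)|$ is a finite simplicial sphere or ball whose dimension matches that of $\padm$, so the injective, simplexwise projective-linear map $\Phi$ is, by invariance of domain, an open map onto $\padm$ (interiors to interiors, faces to faces via the dictionary above). It then suffices to see that $\Phi$ is proper onto $\padm$, hence has closed image: a sequence of weighted multiarcs whose images converge in $\padm$ cannot run off $\mathcal{A}(\Pi)$, since weights tending to $0$, or the support jumping between simplices, force the limit onto precisely the face where the corresponding lengthening functionals vanish. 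Open plus closed plus connectedness of $\padm$ then yields $\Phi\big(|\mathcal{A}(\Pi)|\big)=\padm$, and with the previous paragraph the face stratifications correspond.

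I expect the main obstacle to be exactly this surjectivity/properness step: one must control \emph{every} way a family of strip deformations can degenerate and match each limiting stratum of $\padm$ with the correct arc subcomplex. This is entangled with the second anticipated difficulty, namely setting up the strip-deformation length-variation formula for inter-horoball distances (equivalently, for lambda lengths) and for the degenerate ``spiked'' regions where a test curve escapes to an ideal vertex, rather than only for closed geodesics. Once that single intersection-number formula is in place, admissibility, injectivity, and the combinatorial face dictionary should all follow, and the remaining work is the topological bookkeeping above, carried out uniformly or, if needed, family by family.
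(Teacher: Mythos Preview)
Your overall architecture --- strip map, positivity of length variation, face dictionary via non-lengthened curves --- matches the paper's. The gap is in injectivity. First, the dual test curve $\gamma_\alpha$ need not exist: for the edge-to-edge arc $\alpha$ from side $(1,2)$ to side $(3,4)$ in the fully decorated pentagon $\dep 5$, every horoball connection crossing $\alpha$ also crosses some arc disjoint from $\alpha$ (run the six candidates against the spike-to-edge arcs $5\!\to\!(1,2)$, $5\!\to\!(3,4)$, $1\!\to\!(4,5)$). More seriously, the claim that ``the collection of non-lengthened test curves recovers the support'' is false on the interior of the complex: any two \emph{filling} arc systems yield the same (empty) collection, so this gives no leverage to separate distinct maximal simplices. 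What is actually needed is local injectivity at codimension-$1$ faces of $\mathcal{A}(\Pi)$: for adjacent top simplices $\sigma,\sigma'$ sharing a facet $\tau$, one must verify that $\Phi(\sigma)$ and $\Phi(\sigma')$ lie on opposite sides of the hyperplane spanned by $\Phi(\tau)$. This no-folding step is the heart of the homeomorphism argument in~\cite{dgk} and in the references the paper cites for Proposition~\ref{prop:striphomeo}; without it, invariance of domain cannot be invoked, and both injectivity and your openness-plus-properness surjectivity remain unproved.

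For the face description, your idea is right but underspecified. The paper packages each family of non-lengthened curves into its filled subsurface --- the \emph{spread subsets} $P$ of Definition~\ref{def: spread} --- and then matches dimensions: $\dim\mathcal{A}(\Pi,P)$ is computed via the ballness results of Proposition~\ref{prop:dimofmac} and shown to equal the dimension of the linear subspace $V_P$ of deformations keeping $P$ rigid. That dimension match, rather than just the combinatorial dictionary, is what pins each face down exactly.
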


In order to state the result precisely (Theorem~\ref{thm:mainB} below), we will need to make a number of definitions in Section~\ref{sec:defs}.
Nevertheless, we can describe right away the class of spiked surfaces $\Pi$ (ignoring decorations) to which Theorem~\ref{thm:mainA} applies: these are 

(1) the spiked disks (ideal polygons), 

(2) once-punctured spiked disks (once-punctured polygons,  

(3) once-holed spiked disks (``crowns''), and 

(4) spiked M\"obius strips. 

\noindent In particular, their fundamental groups are all either trivial or cyclic.
Among complete connected hyperbolic surfaces with nonempty, geodesic boundary and finite, nonzero area, these are the only examples with finite arc complexes that have spikes. In \cite{ppstrip} and \cite{ppdmst}, it was shown that the interior of the arc complex parametrises the projectivised admissible cone. In \cite{ppballs} and \cite{ppstrong}, it was shown that the full arc complexes of these four types of decorated surfaces are closed balls. Consequently, the projectivised strip map was shown to be a homeomorphism between the full arc complex and $\padm$.

If we include spikeless surfaces then there are two more cases with finite arc complexes: the three-holed sphere and two-holed projective plane. These two surfaces have 
$\dim \overline{\Lambda(m)}^+=2$
(or less if we allow some of the holes to be converted to punctures) and are easy to treat separately, as was done in~\cite[\S6]{dgk}.

Note that our surfaces may have punctures (also called ``cusps'': regions isometric to the quotient of a horoball by a parabolic isometry), as exemplified by~(2) in the above list; but we do not consider any ``horoball decorations'' on the cusps themselves, only on the spikes.

For surfaces with infinite arc complexes, a full counterpart of Theorem~\ref{thm:mainA} does exist but it is quite a bit more delicate to state (and prove). 
We will deal with it in future work.


\subsection*{Plan of the paper}
In Section~\ref{sec:defs} we define most objects needed to make Theorem~\ref{thm:mainA} precise.
In Section~\ref{sec:mainresult} we state and prove the main result, Theorem~\ref{thm:mainB}.
The remaining sections explore consequences and special cases.
In Section~\ref{sec:walls} we characterize the facets (top-dimensional faces) of the admissible cone.
In Section~\ref{sec:fully} we describe the vertices of the admissible cone, in the case where it is a properly convex polytope (i.e.\ when all spikes of $\Pi$ are decorated).
In Appendix \ref{sec:app1} we collect a few facts used in the proof of the main result; these are in large part known from the literature but we present them in a unified way useful to our purpose. Finally, in Appendix \ref{sec:app2}, we illustrate Theorem~\ref{thm:mainB} and Proposition~\ref{prop:dimofmac} in the case of a fully decorated 2-crown $\dholed 2$. 


\section{Definitions} \label{sec:defs}

\subsection{Topological type and dimension} \label{sec:toptype}
For $g\geq 0$, let $\mathsf{S}_g$ be the connected sum of $g$ copies of the torus (or standard surface of genus~$g$), with $\mathsf{S}_0$ the $2$-sphere.
For $h\geq 1$, let also $\mathsf{N}_h$ be the connected sum of $h$ copies of the projective plane.
A complete hyperbolic surface $\Pi$ with nonempty geodesic boundary and finite (positive) area can always be obtained, up to homeomorphism, from some (unique) surface $\mathsf{S}_g$ or $\mathsf{N}_h$, by removing the interiors of disjoint closed disks $D_1, \dots, D_\ell$, and then attaching to each boundary loop $\partial D_i$ either a punctured disk $\mathbb{S}^1\times [0,+\infty)$, or an annulus $\mathbb{S}^1\times [0,1]$, or an $n_i$-\emph{crown} which is an annulus minus $n_i\geq 1$ points of $\mathbb{S}^1\times \{1\}$.
(Gluings happen along $\mathbb{S}^1\times \{0\}$.)

The ``missing'' point at infinity of a punctured disk $\mathbb{S}^1\times [0,+\infty)$ gives rise to a so-called puncture of $\Pi$, surrounded by a cusp region. 
The annulus boundaries $\mathbb{S}^1\times \{1\}$ give rise to the compact (geodesic) boundary components of $\Pi$, while the $n_i$ open segments in the boundary of an $n_i$-crown give rise to $n_i$ noncompact geodesic boundary components of $\Pi$: metrically, two consecutive of these $n_i$ segments are infinite geodesics that become asymptotically close near their common end.
(For $n_i=1$ it could be the two ends of one geodesic.)

Finally, when $n_i>0$, we may decorate $r_i$ of the $n_i$ spikes at the $i$-th hole with horoballs.
If $\ell' \leq \ell$ is the number of punctures, then the deformation space of such a surface $\Pi$ has dimension
$$N_\Pi=\left \{ \begin{array}{ll} 
6g-6+2\ell' + 3(\ell-\ell') + \sum_i n_i+r_i, & \text{if $\Pi$ is orientable} \\ 
3h-6+2\ell' + 3(\ell-\ell') + \sum_i n_i+r_i, & \text{otherwise} \end{array} \right .$$
which is non-negative. 
Note that our assumption of nonempty boundary means\ $\ell>\ell'$; in particular $\ell \geq 1$.
In fact our initial assumption that $\Pi$ be a hyperbolic surface with \emph{positive} area is slightly \emph{stronger} than $N\geq 0$ and $\ell>\ell'$:

\noindent --- When $(g,\ell, \ell')=(0,1,0)$ then $\Pi$ is an $n_1$-gon and $N_\Pi=n_1+r_1-3$. 
The demand of positive area requires $n_1\geq 3$, slightly stronger than $N_\Pi \geq 0$.

\noindent --- When $(g,\ell, \ell')=(0,2,0)$, then $\Pi$ is an annulus with $n_1+n_2$ spikes and $N_\Pi = (n_1+n_2)+(r_1+r_2)$: positive area requires $n_1+n_2 \geq 1$, stronger than $N\geq 0$.

\noindent --- When $(h, \ell, \ell')=(1,1,0)$, similarly $\Pi$ is an $n_1$-spiked M\"obius strip and $N_\Pi = n_1+r_1$: positive area requires $n_1 \geq 1$, stronger than $N_\Pi \geq 0$.

\noindent --- In all other cases, one may check that $N_\Pi\geq 0$ already implies that the surface of the corresponding topological type can be realized as a hyperbolic surface with positive area.
In particular, when $(g, \ell, \ell')=(0,2,1)$, then $\Pi$ is a punctured $n_1$-gon and $N_\Pi = n_1+r_1-1$: positive area requires $n_1 \geq 1$, \emph{equivalent} to $N_\Pi \geq 0$ (recall $0\leq r_1\leq n_1$).

\subsection{Notation} \label{sec:notation}

The four special cases just enumerated are in fact, essentially, the ones Theorem~\ref{thm:mainA} will apply to (i.e.\ the surfaces with finite arc complexes). 
Since we will mostly focus on these surfaces, we introduce special notation for them.
Each notation in the table below represents a finite family of topological types, depending on how the $r$ decorations are distributed among the $n$ spikes.
The rightmost column can be ignored for the moment.

$$	\begin{array}{|c|c|c|c|c|c|}
		\hline
	\text{Notation} &\text{\# spikes} &\text{\# decorations} &  \text{Family of surfaces}& N_\Pi & \# \text{ simple } \be\\		\hline
	\pdep nr &n\geq 3 & r\geq 0 &	 \text{ Ideal $n$-gon}&n+r-3 &\dfrac{r(r-1)}{2}\\[0.8em]
	\puncp nr &n\geq 1 & r\geq 0 &	 \text{ Punctured $n$-gon}&n+r-1& r^2 \\[0.5em]
	\pholed nr &n\geq 1  & r\geq 0 & \text{$n$-spiked crown}& n+r & r^2 + 1\\[0.5em]
	\pmob nr & n\geq 1  & r\geq 0 &	 \text{$n$-spiked M\"obius strip}& n+r & \dfrac{r(r+1)}{2}+r^2+1\\[0.5em]
	\hline
	\end{array}$$

(In keeping with the previous section, a \emph{crown} is an annulus with $0$ spikes on one of its boundary components.)



It is straightforward to check from the classification above that these are exactly the hyperbolic surfaces with spikes that do \emph{not} contain any two-sided simple loop other than the boundary.
By contrast, any surface that does  possess such a loop must have infinite mapping class group (because of Dehn twist), hence infinitely many arcs.

For\begin{figure}[h!]
    \centering
    \includegraphics[width=0.7\linewidth]{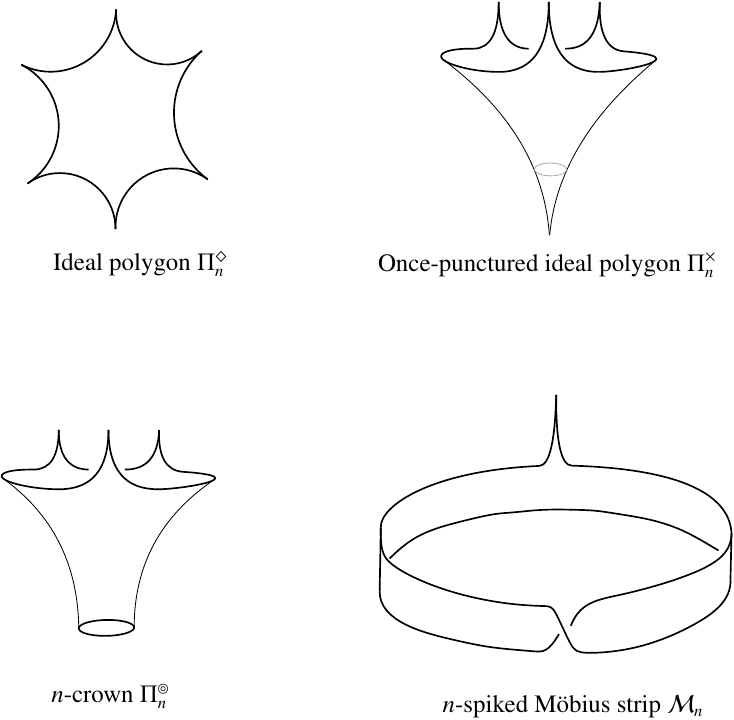}
    \caption{\emph{The four families of surfaces with finite arc complexes. The index $n$ refers to the number of spikes: here $\Pi^\Diamond_6, \Pi^\times_3, \Pi^\circledcirc_3~, \mathcal{M}_1$.}}
    \label{fig:4fam}
\end{figure} concreteness, we now describe the four types of surfaces directly (see Figure~\ref{fig:4fam}).

\paragraph{Ideal polygons.}
An ideal $n$-gon, denoted by $\ip n$, is the convex hull in $\HP$ of $n\geq 3$ points in the boundary of $\HP$.
This surface is homeomorphic to a disk with $n$ points removed from its boundary.
The infinite hyperbolic geodesic joining two consecutive spikes is called an \emph{edge}. 
\paragraph{Once-punctured polygons.}A once-punctured $n$-gon, denoted by $\Pi_n^\times$, with ($n\geq 1$), is obtained from an ideal ($n+2$)-gon by identifying two consecutive edges via a parabolic isometry of $\HP$ fixing their common endpoint.
The resulting surface is homeomorphic to a punctured disk with $n$ points removed from its boundary. 

\paragraph{Crowns.}	The surface obtained by removing $n\geq 1$ points from one boundary component of an annulus $\s1 \times [0,1]$, is called a \emph{crown} and is denoted by $\crown n$.
Its boundary consists of one simple closed curve and $n$ open intervals.
Its fundamental group $\fg{\crown n} \simeq \Z$ is generated by a loop $b$ in the free homotopy class of the simple closed curve.
We can specify a hyperbolic structure on $\holed n$ in the following way:
let $\ga\in \psl$ be a hyperbolic element whose axis is a bi-infinite geodesic, denoted by $l$.
It divides the boundary circle $\HPb$ into two open intervals.
Let $x_1$ be a point in one of them, and choose $(n-1)$ distinct points $x_2,\ldots,x_n$ on the same interval between $x_1$ and its image $\ga\cdot x_1$. 
The points $\gamma^k \cdot x_i$, for $(k,i)\in \mathbb{Z} \times \llbracket 1,n \rrbracket$, all lie on the same side of $l$; their closed convex hull is invariant under $\ang{\gamma}$, and its quotient is a complete finite-area hyperbolic surface with geodesic boundary, homeomorphic to $\holed n$.
If $\rho:\fg {\holed n}\longrightarrow \psl$ is the holonomy representation, then $\rho(b)=\gamma$.
The images of the ideal points $x_1,\ldots,x_n$ in the quotient are called \emph{spikes}.

\paragraph{M\"obius strip with spikes.}
The non-orientable surface obtained by removing $n\geq 1$ points from the boundary of a M\"obius strip, is called a {spiked M\"obius strip}. 
Its orientation double cover is an annulus minus $n$ points on each boundary component. 
One can put a hyperbolic structure on this surface in the following way: 
consider an ideal $n+2$-gon with marked vertices $x_1,\ldots,x_{n+2}$.
Take any two distinct edges $e,e'$ of the polygon.  
Let $l$ be a geodesic that intersects $e$ and $e'$ such that the angles of intersection sum to $\pi$, and let $d$ be the distance between $l\cap e$ and $l\cap e'$. 
Let $h\in \pgl$ be the glide reflection along $l$ with translation length $d$.
Then the quotient $\ip {n+2}/\sim $, where for every $z\in e$ we set $z \sim h\cdot z$, is a metrically complete finite-area hyperbolic surface.


\subsection{Admissible deformations}
\subsubsection{Horoball connections} 

\begin{definition}
Consider a surface $\Pi$ as in the table above, with $r$ of its $n$ spikes decorated by horoballs. A bi-infinite geodesic joining two decorated spikes is called a \emph{horoball connection}.
The \emph{length} of a horoball connection is defined as the hyperbolic length of the geodesic segment subtended by the two horoballs.
A horoball connection is called \emph{simple} if it has no self-intersection. A simple horoball connection is called \emph{separating} if it decomposes the surface into two nonempty subsurfaces.
\end{definition}

\begin{remark}
In the surfaces $\pdep nr, \puncp nr, \pholed nr, \pmob nr$ the number of simple horoball connections is finite;
but except in $\pdep nr$ the number of nonsimple ones is infinite (for $r>0$).
\end{remark}

\begin{definition} \label{def:maximalHC}
     For $\Pi \in \puncp nr, \pholed nr, \pmob nr$, a separating horoball connection is called \emph{maximal} if one of the two subsurfaces is homeomorphic to $\pdep{n+1}{r+1}$. 
     (For $\Pi \in \pdep nr$ there is no corresponding notion.)
\end{definition}

\begin{remark} \label{rem:sep}
Some simple horoball connections can be sides of $\Pi$; we count them as separating.
For $\Pi \in \pdep nr, \puncp nr, \pholed nr$, all simple horoball connections are separating.
For $\Pi \in \pmob nr$, there exist both separating and non-separating simple horoball connections.
\end{remark}

\subsubsection{The admissible cone}
Let $\mathcal{H}(\Pi)$ be the set of all horoball connections and closed loops of the surface
$\Pi$.
Then we can define the following smooth positive function for every $\be\in \mathcal{H}(\Pi)$:
\[
\begin{array}{cccl}
	l_\be:&\tei \Pi & \longrightarrow & \R_{>0} \\
	&m & \longmapsto & \text{length of $l_\be$ with respect to $m$.}
\end{array}
\] 
\begin{definition}
    Given a hyperbolic metric $m$ on $\Pi$, a vector in the tangent space $\mathrm{T}_m \mathfrak{D}(\Pi)$ is called an \emph{infinitesimal deformation} of $\Pi$.
\end{definition}
\begin{definition}
    An infinitesimal deformation $v$ is said to be \emph{admissible} if there exists a constant $K>0$ such that for all $\be\in \mathcal{H}(\Pi)$, 
\[ \mathrm{d}l_\be (v) \geq K l_\be .\] 
(The condition is vacuously true if $\mathcal{H}(\Pi) = \varnothing$, e.g.\ for undecorated polygons $\Pi=\pdep n0$.)
\end{definition}
\begin{definition}
   The \emph{admissible cone}, denoted by $\adm$, of a metric $m\in\tei {\Pi}$ on a partially decorated surface $\Pi$, is defined to be the set of all admissible deformations of $m$. 
\end{definition}

The admissible cone is open and convex in $\mathrm{T}_m \mathfrak{D}(\Pi)$.
In the case of a decorated polygon $\Pi\in \pdep nr$, there are only finitely many horoball connections, say $\be_1,\ldots, \be_{p(r)}$.
Thus the admissible cone is the intersection of finitely many open half-spaces:
\[\adm =\bigcap\limits_{i=1}^{p(r)} \, \{dl_{\be_i}>0\}. \]

\begin{definition}
    A \emph{weakly admissible} infinitesimal deformation is by definition an element of the \emph{closure} of the admissible cone.
\end{definition}

\begin{notation}
   For a horoball connection or nontrivial closed curve $\be$ in $\Pi$, we define $$H_\be(m):=\{ u\in \mathrm{T}_m \mathfrak{D}(\Pi) \mid \mathrm{d}l_{\be}(m)(u)=0 \},$$ 
usually abbreviated to $H_\beta$ when the metric $m$ is obvious and/or irrelevant.
For any subset $W\subset \mathrm{T}_m \mathfrak{D}(\Pi)$, we denote by $W^+$ its positive projectivisation $\{\R_+^* w~|~ w\in W\smallsetminus \{0\}\}$ in the sphere of directions (rays) of $\mathrm{T}_m \mathfrak{D}(\Pi)$: for instance, $H_\beta^+$. 
\end{notation}

\begin{definition} \label{def:filled}
Let $\be$ be a horoball connection of a decorated hyperbolic surface $\Pi$, with universal covering map $\pi:\widetilde{\Pi} \rightarrow \Pi$.
Then the \emph{subsurface filled by} $\be$ is the quotient $\Pi_\beta$ by $\pi_1(\Pi)$ of the union of all convex hulls in $\widetilde{\Pi}$  of connected components of $\pi^{-1}(\beta)$.
This notion depends on a choice of hyperbolic metric $m$ on $\Pi$, but is independent of $m$ up to isotopy of $\Pi$.
\end{definition}
If the horoball connection $\beta$ is simple then the filled ``subsurface'' $\Pi_\beta$ is reduced to $\beta$.
Otherwise, $\Pi_\beta$ is a genuine subsurface with geodesic boundary in $\Pi$, embedded except possibly at some boundary loops of $\Pi_\beta$ that can be matched in pairs (when two of the convex hulls in $\widetilde{\Pi}$ share a boundary geodesic).
We can define similarly the subsurface filled by any finite collection of horoball connections $\beta_1, \dots, \beta_k$, via convex hulls of connected components of $\pi^{-1}(\beta_1 \cup \dots \cup \beta_k)$.

\subsubsection{Subsurfaces}
Given a hyperbolic surface $\Pi\in\pdep nr, \puncp nr, \pholed nr, \pmob nr$, selecting a subset $E$ of $n'\leq n$ spikes of~$\Pi$, including $r'\leq r$ decorated ones, defines another surface $\Pi'\in \pdep {n'}{r'}, \puncp {n'}{r'}, \pholed {n'}{r'}, \pmob {n'}{r'}$.
(Here we assume that $n'$ is large enough that $\Pi'$ is indeed a hyperbolic surface, in the sense of Section~\ref{sec:toptype}).
Restriction of hyperbolic metrics determines a smooth, natural projection $\pi:\mathfrak{D}(\Pi) \rightarrow \mathfrak{D}(\Pi')$.

If this subset $E$ of spikes contains all the decorated spikes, then given any metric $m\in \mathfrak{D}(\Pi)$ and its image $m':=\pi(m) \in \mathfrak{D}(\Pi')$, the linear projection 
$$\mathrm{d}\pi:\mathrm{T}_m\mathfrak{D}(\Pi) \rightarrow \mathrm{T}_{m'}\mathfrak{D}(\Pi')$$ restricts to a map between admissible cones: $(\mathrm{d}\pi)^{-1} (\Lambda(m'))=\Lambda(m)$. 
This is just a way of saying that admissibility of an infinitesimal deformation does not depend on the infinitesimal motions of the undecorated spikes, since these do not span any horoball connections.
For general $E$ one has only $(\mathrm{d}\pi)^{-1} (\Lambda(m')) \supset \Lambda(m)$, i.e.\ an admissible deformation of $\Pi$ projects to an admissible deformation of $\Pi'$.

If $E$ is \emph{equal} to the set of decorated spikes, i.e.\ $n'=r'=r$, then the admissible cone $\Lambda(m') \subset \mathrm{T}_{m'} \mathfrak{D}(\Pi')$ is properly convex, i.e.\ its closure contains no line: this is true because if both a vector $v$ and its negative belong to~$\overline{\Lambda(m')}$, then $\mathrm{d}l_\beta(v)=0$ for every horoball connection~$\beta$, hence $v=0$ because the lengths of horoball connections smoothly determine the metric~$m'$.
One then has $\Lambda(m) \simeq \Lambda(m') \oplus \mathbb{R}^{n-r}$ as convex sets, and likewise $\overline{\Lambda(m)} \simeq \overline{\Lambda(m')} \oplus \mathbb{R}^{n-r}$.
(In general any convex cone has such a decomposition, essentially unique, as a sum of a properly convex cone and an $\mathbb{R}$-vector space.)

\begin{remark} \label{rem:spheres}
When investigating the faces of the convex spherical polytope $\overline{\Lambda(m)}^+$ below, each face will thus be a suspension of a properly convex projective polytope with the sphere $(\mathbb{R}^{n-r})^+ \simeq \mathbb{S}^{n-r-1}$, accounting for free motion of undecorated spikes.
When $n>r$ this includes the smallest-dimensional face $\mathbb{S}^{n-r-1}$ situated ``at infinity'' (suspended with the empty set), representing deformations that \emph{only} move the nondecorated spikes.
All other faces (defined as intersections of $\overline{\Lambda(m)}^+$ with supporting great spheres) are topologically closed balls, containing $\mathbb{S}^{n-r-1}$ in their boundary.
\end{remark}
(Here $\mathbb{S}^0$ is as usual a pair of antipodal points; and we can also set $\mathbb{S}^{-1}:=\varnothing$ in order to extend patterns consistently to the fully decorated case $n=r$.)

\subsubsection{Numbers of horoball connections and closed curves} \label{sec:counting}

We can now count the simple horoball connections and simple closed curves present in the four types of surfaces:

\noindent $\bullet$ In polygons $\pdep nr$ we can index simple horoball connections by unordered pairs of distinct (decorated) spikes, forming their endpoints.  

\noindent $\bullet$ In punctured polygons $\puncp nr$ we can index simple horoball connections in \emph{ordered} pairs of spikes, possibly equal (the order is dictated by which side of the horoball connection contains the puncture).

\noindent $\bullet$ In holed polygons $\pholed nr$ we can use a similar indexation scheme, and the surface also contains a simple closed geodesic.

\noindent $\bullet$ Finally in Möbius strips $\pmob nr$, separating horoball connections are the ones disjoint from the unique simple closed (core) geodesic and can be indexed in ordered pairs of spikes (possibly equal), while nonseparating horoball connections, which cross the core, can be indexed in unordered pairs of spikes (also possibly equal). 

These tallies of simple horoball connections and simple closed curves are summarized in the rightmost column of the table in Section~\ref{sec:notation} above.

\subsection{Arc complexes}

The other main ingredient of Theorem~\ref{thm:mainA} is the arc complex of a (spiked, partially decorated) hyperbolic surface $\Pi$.
We now give the definitions.


An \emph{arc} in $\Pi$ is a proper geodesic embedding $\al$ of a closed interval or ray $I\subset \R$ into~$\Pi$, relative to its boundary (i.e.\ $\alpha(I)\cap \partial \Pi = \alpha(\partial I)$), and subject to the conditions below:
\begin{enumerate}
	\item If $I=[a,b]$, then the arc $\al$ is a finite geodesic segment connecting two sides of~$\Pi$. 
We rule out the case that these two sides are consecutive and share an undecorated spike (i.e.~that $\alpha$ separates off a single, undecorated spike from the rest of the surface).
	\item If $I=[a,\infty)$, then the arc $\alpha$ is a hyperbolic geodesic ray in $\Pi$ such that  $\al(a)\in \partial \Pi$, and the infinite end converges to a spike of $\Pi$.
    We require that that spike be decorated.
\end{enumerate}  
To emphasize these two conditions we will sometimes speak of \emph{permitted} arcs (although by default we use ``arc'' as a strict synonym).

Let $\mathscr A$ be the set of all arcs of the two types above. 
The \emph{arc complex} of the partially decorated hyperbolic surface $\Pi$ is a simplicial complex $\hac{\Pi}$ whose 0-simplices are given by the arcs in $\mathscr A$ seen up to isotopy fixing the boundary (setwise), and for $k\geq 1$, every $k$-simplex is given by a $(k+1)$-tuple of pairwise disjoint and distinct isotopy classes.
A simplex $\sigma$ is said to be \emph{filling} if the arcs corresponding to $\sigma^{(0)}$  decompose the surface into topological disks with at most one decorated spike, and punctured disks with no decorated spikes.
The \emph{pruned arc complex} of a polygon $\Pi$, denoted by $\sac \Pi$, is the union of the interiors of the filling simplices of the arc complex $\hac \Pi$.

\begin{definition} \label{def:coll}
Given a hyperbolic surface $\Pi\in\pdep nr, \puncp nr, \pholed nr, \pmob nr$,  we consider a subset $\vec{\be}=\{ \be_1, \dots, \beta_h \}$ of the collection formed by the boundary horoball connections of $\Pi$ and the closed simple loop of $\Pi$ (if present). 
The \emph{marked} arc complex, denoted by $\acp{\Pi, \vec{\be}}$, is the subcomplex of the arc complex $\acp{\Pi}$ generated by the permitted arcs that do not intersect any $\be_i$.
\end{definition}

\subsection{Strip deformations}
An arc $\alpha:I \rightarrow \Pi$ in $\Pi$ defines an infinitesimal deformation of $\Pi$, as follows.
Choose a point $p_\alpha$ in the interior of $\alpha(I)$.
Suppose, first, that $I$ is a compact interval.
We may cut $\Pi$ along the geodesic $\alpha(I)$ and insert a \emph{strip} of hyperbolic plane along $I$, of minimal width $t\geq 0$ achieved precisely at $p_\alpha$.
(A strip is the region bounded by two bi-infinite geodesics with disjoint closures.)
This moves the ideal vertices and decorations of $\Pi$, but we can use their new positions to define a new hyperbolic metric $m_t\in \mathfrak{D}(\Pi)$.

When $I$ is noncompact, say $I=[0,+\infty)$, we make a similar cut-and-paste definition of the metric $m_t$, except that the shape we insert along $\alpha(I)$ is a \emph{parabolic strip}, isometric to the region of $\HP$ bounded by two mutually asymptotic lines, with horocyclic width $t$ at the depth of $p_\alpha$.

In both cases, the construction can be performed equivariantly on all lifts of $\alpha(I)$ to the universal cover of $\Pi$: this defines also a new holonomy representation for $m_t$.
By definition, the strip deformation associated to $\alpha$ is the class of $t\mapsto m_t$ in $\mathrm{T}_m\mathfrak{D}(\Pi)$. 
Extending linearly, this lets us define, for every metric $m\in \mathfrak{D}(\Pi)$, the \emph{strip map}
\begin{equation} \label{eq:stripmap}
f:\mathcal{A}(\Pi) \longrightarrow \mathbb{P}^+(\mathrm{T}_m \mathfrak{D}(\Pi))
\end{equation}
which takes values in the weakly admissible deformations.
(Weak admissibility follows from the observation that the length of a closed curve or horoball connection can only go \emph{up} if it intersects one or more of the arcs supporting the deformation; in fact this increase can be measured trigonometrically~\cite[\S2.1]{dgk}).
Here is a key fact:
\begin{proposition} \label{prop:striphomeo}
For any $\Pi\in \pdep nr, \puncp nr, \pholed nr, \pmob nr$ endowed with a hyperbolic metric $m$, the 
strip map $f:\mathcal{A}(\Pi) \rightarrow \padm$ is a homeomorphism.
\end{proposition}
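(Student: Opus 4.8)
The plan is to exhibit $f$ as a continuous bijection from the compact finite complex $\mathcal A(\Pi)$ onto the Hausdorff space $\padm$; being such, it is automatically a homeomorphism, so the work splits into continuity, surjectivity, and injectivity. Continuity is routine: the strip deformation attached to a compact or non-compact arc $\alpha$ with chosen basepoint $p_\alpha$ depends smoothly on $\alpha$, on $p_\alpha$ and on $m$, so the linear extension over each closed simplex followed by projectivisation is continuous; since $\mathcal A(\Pi)$ is a finite complex, $f$ is continuous on it, and by the weak-admissibility remark following \eqref{eq:stripmap} it takes values in $\padm$. By the splitting $\Lambda(m)\simeq\Lambda(m')\oplus\mathbb R^{n-r}$ together with Remark~\ref{rem:spheres}, we may either reduce to the fully decorated (properly convex) case or simply carry the suspension factor $\mathbb S^{n-r-1}$ along at every step.

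For surjectivity we invoke the interior statement of \cite{ppstrip}, \cite{ppdmst}: $f$ restricts to a homeomorphism from the pruned arc complex $\sac\Pi$ onto the open cone $\Lambda(m)^+$. Since $\Lambda(m)$ is a nonempty open convex cone, $\Lambda(m)^+$ is dense in $\padm=\overline{\Lambda(m)}^+$; the image $f(\mathcal A(\Pi))$ is compact, hence closed, and it contains this dense subset, so $f$ is onto. The same input, together with a length-monotonicity observation, pins down the boundary behaviour: a simplex $\sigma$ outside $\sac\Pi$ is non-filling, so its arcs can all be isotoped off some $\beta\in\mathcal H(\Pi)$ lying in a complementary piece; a strip deformation supported on arcs disjoint from $\beta$ leaves $l_\beta$ unchanged to first order, so $f(\sigma)\subseteq H_\beta^+$, which misses the open cone $\Lambda(m)^+$, on which $\mathrm{d}l_\beta>0$. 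Hence $f^{-1}(\Lambda(m)^+)=\sac\Pi$ and, in particular, no point outside $\sac\Pi$ collides with a point of $\sac\Pi$.

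Injectivity is the crux, and the remaining case is injectivity of $f$ on $\mathcal A(\Pi)\smallsetminus\sac\Pi$, which we establish by induction on $N_\Pi$; the lowest cases (triangles, the one-spiked crown and M\"obius strip, and the degenerate small-$r$ configurations) are $0$- or $1$-dimensional and checked directly, as in \cite[\S6]{dgk}. By the description of walls in Section~\ref{sec:walls}, each facet of $\padm$ is $\padm\cap H_\beta^+$ for a simple separating horoball connection or the core loop $\beta$; cutting $\Pi$ along such a $\beta$ yields one or two surfaces again in our four families, each with strictly smaller $N$, and realises $\padm\cap H_\beta^+$ as the join of their projectivised admissible polyhedra. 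Its preimage is exactly the subcomplex of arcs disjoint from $\beta$, namely the marked arc complex $\acp{\Pi,\{\beta\}}$ of Definition~\ref{def:coll}, which is the join of the arc complexes of the pieces, and on it $f$ is the join of the corresponding strip maps — a homeomorphism by the inductive hypothesis. As $\beta$ ranges over the walls these preimages cover $\mathcal A(\Pi)\smallsetminus\sac\Pi$, and they agree on overlaps (the lower-dimensional faces are governed by the same cutting operations), so the individual injectivity statements glue to injectivity of $f$ on the whole boundary. Together with the interior case and the non-collision above, $f$ is injective, hence a homeomorphism; as a byproduct $\mathcal A(\Pi)$ inherits the topological type of $\padm$, recovering the ball/sphere statements of \cite{ppballs}, \cite{ppstrong}.

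The real work, and the place I expect resistance, is entirely in the inductive step: verifying that the preimage of a wall is precisely the marked arc complex and nothing larger (an arc crossing $\beta$ forces the strict inequality $\mathrm{d}l_\beta>0$, measurable trigonometrically as in \cite[\S2.1]{dgk}); checking that the four families are stable under cutting along a simple horoball connection or the core loop, so the induction is well-founded; identifying correctly which $\beta$'s give walls as opposed to redundant inequalities; and treating the undecorated spikes, where the relevant face is not cut out by any horoball connection but appears through the suspension structure of Remark~\ref{rem:spheres}. The interior parametrisation \cite{ppstrip}, \cite{ppdmst} and the finiteness of $\mathcal A(\Pi)$ — itself a consequence of the absence of two-sided simple loops noted in Section~\ref{sec:notation} — are used as black boxes.
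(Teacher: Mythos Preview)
The paper does not prove this proposition in the text: it simply cites \cite{ppballs}, noting that the argument there (stated for $\pdep nr$ and $\puncp nr$) only uses finiteness of the arc complex. So there is no in-paper proof to compare against, and your proposal must be judged on its own.

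Your strategy---continuity, surjectivity via the interior homeomorphism plus compactness, boundary-to-boundary, and then injectivity on the boundary by induction on $N_\Pi$---is sound in outline, and the gluing argument is correct once observed carefully: if $f(x)=f(y)$ with $x$ supported off $\beta$, then $\mathrm{d}l_\beta(f(y))=0$ forces every arc in the support of $y$ to miss $\beta$ as well (by nonnegativity and the strict-positivity for crossing arcs), so both $x,y$ lie in $\mathcal A(\Pi,\{\beta\})$ and the inductive injectivity there finishes it. That is the actual content; your ``agree on overlaps'' phrasing undersells it.

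The genuine concern is logical order. You invoke ``the description of walls in Section~\ref{sec:walls}'' to assert that each facet is $H_\beta^+\cap\padm$ for a simple $\beta$. But Section~\ref{sec:walls} is presented in the paper as an \emph{application} of Theorem~\ref{thm:mainB}, whose proof uses Proposition~\ref{prop:striphomeo}. Lemma~\ref{lem: simple} in isolation only shows that certain $H_\beta$ \emph{are} facets (via Proposition~\ref{prop:dimofmac} and linear independence from \cite{ppstrip}), not that all facets arise this way. Fortunately your argument does not actually need the facet classification: you only need the elementary topological fact that every non-filling family of arcs misses some simple horoball connection or the core loop, together with the preimage identity $f^{-1}(H_\beta^+\cap\padm)=\mathcal A(\Pi,\{\beta\})$. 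Rephrase the inductive step to use only these two ingredients and the circularity disappears. Relatedly, your closing remark that ballness of $\mathcal A(\Pi)$ is a \emph{byproduct} runs against the grain of the paper's dependencies: in the cited sources, ballness is an input to (not output of) the homeomorphism statement.
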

We refer to~\cite{ppballs} for a proof (the statement there is limited to $\Pi\in \pdep nr, \puncp nr$, but the proof only uses the finiteness of the arc complex $\mathcal{A}(\Pi)$). Proposition~\ref{prop:striphomeo} \emph{fails} in general for surfaces with infinite arc complexes, although $f$ still establishes a homeomorphism from the pruned arc complex $\sac \Pi$ to the interior of $\padm$.
Proposition~\ref{prop:striphomeo} contains in particular the following facts: 
\begin{itemize}
\item The largest number of vertices in a simplex $\sigma$ of the arc complex $\hac{\Pi}$ is $N_\Pi$, the dimension of the deformation space $\mathfrak{D}(\Pi)$. 
\item For any such $\sigma$, the strip deformations associated to its vertices form a basis of $\mathrm{T}_m \mathfrak{D}(\Pi)$.
\end{itemize}


\section{Main result} \label{sec:mainresult}
\begin{figure}[!h]
    \centering
    \includegraphics[width=0.9\linewidth]{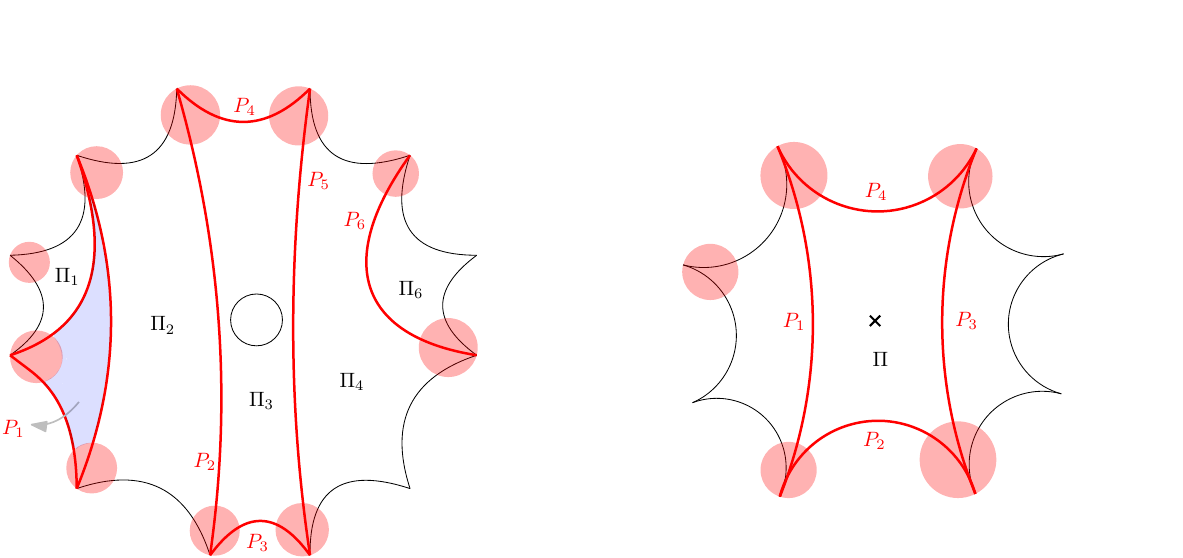}
    \caption{Left panel: A spread subset (red with blue interior) in a partially decorated crown. \\ Right panel: \emph{Not} a spread subset because $\Pi\smallsetminus P$ has a component that does not meet $\partial \Pi$.}
    \label{fig: spread}
\end{figure}

\subsection{Spread subsets}
Theorem~\ref{thm:mainB} below will express the faces of the projectivized admissible cone $\overline{\Lambda(m)}^+$ in terms of certain subcomplexes of the arc complex $\mathcal{A}(\Pi)$.
To describe the subcomplexes of interest, we need to make some definitions.
\begin{definition}\label{def: spread}
Consider $\Pi\in \pdep nr, \puncp nr, \pholed nr, \pmob nr$, and a nonempty closed subset $P\subsetneq \Pi$ with connected components $P_1, \dots, P_k$. 
We say that $P$ is \emph{spread} if 
\begin{itemize}
\item every $P_i$ is either a simple closed geodesic loop, or a horoball connection, or a subsurface with geodesic boundary and positive area whose spikes (taken among those of $\Pi$) are all decorated; and 
\item every component of $\Pi \smallsetminus P$ intersects $\partial \Pi$.
\end{itemize}
\end{definition}
\noindent
See Figure~\ref{fig: spread} for an illustration.
Given a spread subset $P\subset \Pi$, the metric completions of the connected components of $\Pi\smallsetminus P$ are hyperbolic surfaces $\Pi_1, \dots, \Pi_\kappa$.
For each $\Pi_\iota$, we let $\vec{\beta}_\iota$ be the (nonempty) collection of horoball connections and geodesic loop(s) in $\partial \Pi_\iota$ that are contained in $P$.
By construction, the simplicial complex
\begin{equation} \label{eq:suspensionsss}
\mathcal{A}(\Pi, P):=\mathcal{A}(\Pi_1, \vec{\beta}_1) \Join \dots \Join \mathcal{A}(\Pi_\kappa, \vec{\beta}_\kappa)
\end{equation}
embeds simplicially in the arc complex $\mathcal{A}(\Pi)$.
It can be described as the subcomplex of arcs that do not meet $P$.

\subsection{Statements}
Here is the main result of this paper, refining Proposition~\ref{prop:striphomeo}.

\begin{theorem} \label{thm:mainB}
Let $\Pi\in \pdep nr, \puncp nr, \pholed nr, \pmob nr$ be a hyperbolic surface with $n$ spikes and $r\leq n$ decorations, with metric $m$. Then: 
\begin{itemize} 
\item The positively projectivized, weakly admissible deformations in $\mathbb{P}^+(\mathrm{T}_m\mathfrak{D}(\Pi))$ form a finite compact polytope $\padm$, homeomorphic to the arc complex $\mathcal{A}(\Pi)$ via the strip map $f$ of~\eqref{eq:stripmap}.
\item The faces of $\padm$ are exactly the $f(\mathcal{A}(\Pi, P))$, where $P \subset \Pi$ ranges over spread subsets. 
\item For any spread subsets $P,P' \subsetneq \Pi$,
\begin{equation} \label{eq:inkface}
\left . \begin{array}{lll} \mathrm{(i)} & \mathcal{A}(\Pi, P) \subset \mathcal{A}(\Pi, P') & \iff P \supset P' \\ \mathrm{(ii)} & \mathcal{A}(\Pi, P) \subsetneq \mathcal{A}(\Pi, P') & \iff P \supsetneq P'. \end{array} \right \}
\end{equation}
\end{itemize}
\end{theorem}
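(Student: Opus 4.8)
The plan is to take Proposition~\ref{prop:striphomeo} as the analytic input and devote the rest to the combinatorics of the face lattice. The first bullet is essentially Proposition~\ref{prop:striphomeo}: the strip map $f\colon\mathcal A(\Pi)\to\padm$ is already asserted there to be a homeomorphism, and one only needs to observe that $\padm$ is genuinely a spherical polytope. By the last assertion of Proposition~\ref{prop:striphomeo} the $f$-image of every $N_\Pi$-vertex simplex of $\mathcal A(\Pi)$ is the positive projectivisation of a simplicial cone, so $\padm$ is a finite union of spherical simplices; since $\padm=\overline{\Lambda(m)}^+$ is also convex --- $\overline{\Lambda(m)}$ being a closed convex cone, properly convex when $n=r$ and otherwise splitting off a linear factor $\mathbb R^{n-r}$ as recalled in \S\ref{sec:defs} --- it is a spherical polytope: a genuine one inside an open hemisphere when $n=r$, and its suspension with $\mathbb S^{n-r-1}$ in general (Remark~\ref{rem:spheres}). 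The engine for everything else is the following consequence of the trigonometric estimate of \cite{dgk}: for a horoball connection or simple closed geodesic $\beta$ and an arc $\alpha$ one has $\mathrm dl_\beta(f(\alpha))\ge 0$, with equality iff the geometric intersection number $i(\alpha,\beta)$ vanishes. Since each $\mathrm dl_\beta$ is $\ge 0$ on $\overline{\Lambda(m)}$, every $H_\beta$ is a supporting hyperplane; and writing $K_S$ for the full subcomplex of $\mathcal A(\Pi)$ on the arcs disjoint from all members of a set $S$ of such curves, the identity $\mathrm dl_\beta(f(x))=\sum_{\alpha\in\sigma}c_\alpha\,\mathrm dl_\beta(f(\alpha))$ (with $x$ in the open simplex $\sigma$, $c_\alpha>0$, summands $\ge 0$) gives
\[
\padm\cap\bigcap_{\beta\in S}H_\beta^+ \;=\; f(K_S),
\]
which is therefore a face of $\padm$.

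Next I would prove that spread subsets give exactly the faces. For a spread $P$, let $\mathcal B(P)$ be the (in general infinite) set of all horoball connections and closed loops disjoint from \emph{every} arc avoiding $P$. Trivially $\mathcal A(\Pi,P)\subseteq K_{\mathcal B(P)}$. The reverse inclusion is the key topological lemma: an arc disjoint from all of $\mathcal B(P)$ must avoid $P$. Indeed, if it met $P$ it would either cross a boundary curve of $P$ (which lies in $\mathcal B(P)$), or be buried inside a subsurface-component $P_i$ of $P$; but a buried arc separates the decorated spikes of $P_i$ into two nonempty groups, hence crosses the geodesic horoball connection of $P_i$ joining a spike from each group, and that connection --- staying inside $P_i$, since $\partial P_i$ is geodesic --- belongs to $\mathcal B(P)$, a contradiction. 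Thus $\mathcal A(\Pi,P)=K_{\mathcal B(P)}$ and, by the displayed formula, $f(\mathcal A(\Pi,P))=\padm\cap\bigcap_{\beta\in\mathcal B(P)}H_\beta^+$ is a face. Conversely, a proper face $F$ is the intersection of the facets of $\padm$ containing it; running the engine formula backwards shows $F=f(K_U)$ where $U$ is the set of arcs killed by the functionals defining those facets. Here one must identify the facets: each codimension-one face of $\padm$ is $f(\mathcal A(\Pi,\{\beta\}))$ for a \emph{single} simple horoball connection, or for the simple closed geodesic of $\Pi$ when present --- equivalently, the facet functionals are the individual $\mathrm dl_\beta$ --- by matching the codimension-one faces of the polytope $\padm$ with the codimension-one subcomplexes in the boundary of the ball $\mathcal A(\Pi)$, to be checked in each of the four families using the inventory of simple horoball connections and the separating/maximal dichotomies of \S\ref{sec:defs}. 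One then reconstructs a spread $P_F$ --- the union of those pieces of the decomposition of $\Pi$ cut out by a maximal simplex of $K_U$ that are not atomic --- and verifies $\mathcal A(\Pi,P_F)=K_U$, so $F=f(\mathcal A(\Pi,P_F))$.

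The third bullet then follows formally. The assignments $P\mapsto f(\mathcal A(\Pi,P))$ and $F\mapsto P_F$ are mutually inverse, hence give a bijection between spread subsets and faces of $\padm$; and $P\mapsto\mathcal A(\Pi,P)$ is visibly inclusion-reversing (enlarging $P$ forbids more of $\Pi$, so fewer arcs survive), which is the implication ``$\Leftarrow$'' of \eqref{eq:inkface}(i). Bijectivity upgrades this to an order-\emph{isomorphism} onto the face lattice, giving ``$\Rightarrow$'' in (i) together with both directions of (ii): $\mathcal A(\Pi,P)=\mathcal A(\Pi,P')$ forces $P=P'$, and $\mathcal A(\Pi,P)\subsetneq\mathcal A(\Pi,P')$ forces $P\supsetneq P'$ rather than merely $\supseteq$.

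I expect the main obstacle to be the case-by-case geometry underlying the ``spread $\Leftrightarrow$ face'' correspondence of the second paragraph: pinning down that each codimension-one face of $\padm$ is cut out by a single simple curve, and that the reconstructed $P_F$ really is spread (its complementary pieces meet $\partial\Pi$, and its components are of the three permitted types). These rest on the explicit structure of arcs and horoball connections in $\pdep nr,\puncp nr,\pholed nr,\pmob nr$ rather than on soft convexity. The key topological lemma (buried arcs always cross an ``interior'' horoball connection) is the other delicate point, and all the bookkeeping must be carried through uniformly in the presence of the $\mathbb S^{n-r-1}$-suspension produced by undecorated spikes.
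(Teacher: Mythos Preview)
Your strategy for the forward direction---showing each $f(\mathcal{A}(\Pi,P))$ is a face by exhibiting it as $\padm\cap\bigcap_{\beta}H_\beta^+$---is correct and close to the paper's. The divergence, and the gap, is in the converse. You propose to (a) first identify the facets of $\padm$ as being cut by single functionals $\mathrm{d}l_\beta$ for simple $\beta$, then (b) for a general face $F$, write $F$ as an intersection of facets and reconstruct $P_F$ as the union of ``non-atomic'' tiles in the decomposition by a maximal simplex of $f^{-1}(F)$. Step~(a) is the content of \S\ref{sec:walls} (Lemmas~\ref{lem: simple}--\ref{lem:nonsimplefacet}), which the paper proves \emph{after} and \emph{using} Theorem~\ref{thm:mainB}; it is not a free observation, and your formulation is not quite right (maximal separating horoball connections in $\pmob nr$ do \emph{not} define facets, while certain non-simple $\beta$ in $\puncp nr$ define the same facet as a simple one). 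Step~(b) is not shown to be well-defined: different maximal simplices of $f^{-1}(F)$ give different tilings of $\Pi$, and you do not argue why the union of non-atomic tiles is independent of this choice, nor why it is spread, nor why $\mathcal{A}(\Pi,P_F)=f^{-1}(F)$.

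The paper avoids both issues by never touching facets. Given $y=f(x)\in\partial\,\padm$, it lets $\mathcal{H}_x$ be \emph{all} simple horoball connections and closed curves disjoint from $|x|$, and takes $P_x$ to be the subsurface \emph{filled} by $\bigcup\mathcal{H}_x$ (Definition~\ref{def:filled}). Spreadness of $P_x$ is proved by a short contradiction: if a complementary component $\Pi_\iota$ missed $\partial\Pi$, one could launch a new horoball connection from a spike of $\partial\Pi_\iota$, disjoint from $|x|$, contradicting the definition of $P_x$. The matching of dimensions---i.e.\ that $f(\mathcal{A}(\Pi,P_x))$ fills out the whole face $F_x$ rather than a proper subface---comes from Proposition~\ref{prop:dimofmac}, which you never invoke: the paper computes $\dim V_{P}=\sum_\iota(N_{\Pi_\iota}-|\vec\beta_\iota|)=1+\dim\mathcal{A}(\Pi,P)$ by applying that proposition to each complementary piece $(\Pi_\iota,\vec\beta_\iota)$. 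Finally, your ``buried arc'' lemma is stated only for simply connected $P_i$ (``separates the decorated spikes into two nonempty groups''); for $\Pi\in\pholed nr,\pmob nr$ a component $P_i$ can be a crown or M\"obius piece, where that sentence does not apply as written.
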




The dimension and topology of a face $f(\mathcal{A}(\Pi, P))$ can be deduced from homeomorphicity of $f$ and from the following, strictly topological result.
\begin{proposition} \label{prop:dimofmac}
Let $\Pi\in\pdep nr, \puncp nr, \pholed nr, \pmob nr$ be a partially decorated surface, and $\vec{\be}=\{ \be_1, \dots, \beta_h \}$ a collection as in Definition~\ref{def:coll} (hence $h \leq n+1$). 
Let $N:=N_\Pi-1-h$ (see Section~\ref{sec:notation}).
Then:
\begin{enumerate}
\item For $\Pi\in \pdep nr$,
the marked arc complex $\acp{\Pi, \vec{\be}}$ is PL-homeomorphic to:
\begin{enumerate}
\item $\varnothing$ if $(r,h)=(n,n)$;
\item the sphere $\mathbb{S}^{N}$ if $(r,h)=(0,0)$ or $(1,0)$ or $(2,1)$;
\item the closed ball ${\mathbb{B}}^{N}$ otherwise.
\end{enumerate}
\item For $\Pi\in \puncp nr$,
the marked arc complex $\acp{\Pi, \vec{\be}}$ is PL-homeomorphic to:
\begin{enumerate}
\item $\varnothing$ if $(r,h)=(n,n)$;
\item the sphere $\mathbb{S}^{N}=\mathbb{S}^{n-2}$ if $(r,h)=(0,0)$;
\item the closed ball ${\mathbb{B}}^{N}$ otherwise.
\end{enumerate}
\item For $\Pi\in \pholed nr$,
the marked arc complex $\acp{\Pi, \vec{\be}}$ is PL-homeomorphic to: \begin{enumerate}
\item $\varnothing$ if $(r,h)=(n,n+1)$;
\item the sphere $\mathbb{S}^{N}=\mathbb{S}^{n-2}$ if $(r,h)=(0,1)$;
\item the closed ball $\mathbb{B}^{N}$ otherwise.
\end{enumerate}
\item For $\Pi\in \pmob nr$, 
the marked arc complex $\acp{\Pi, \vec{\be}}$ is PL-homeomorphic to: \begin{enumerate}
\item $\varnothing$ if $(r,h)=(n,n+1)$ or $(n,n)$ and $\vec{\beta}$ contains all boundary horoball connections;
\item the sphere $\mathbb{S}^{N}=\mathbb{S}^{n-2}$ if $(r,h)=(0,1)$;
\item the closed ball ${\mathbb{B}}^{N}$ otherwise.
\end{enumerate}
\end{enumerate}
\end{proposition}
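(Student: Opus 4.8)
The plan is to prove this by a unified induction on the complexity $n+r$ of the surface, with the four families treated in parallel since they are governed by the same combinatorial mechanism. The base cases are the polygons $\pdep nr$ with small $n+r$ (in particular the undecorated triangle $\pdep 30$, where $N_\Pi=0$ and the marked arc complex is a point $\mathbb{S}^0$, and $\pdep 31$ where it is $\mathbb{S}^0$ as well), together with the observation that when $r=h=n$ (respectively $r=n$, $h=n$ or $n+1$ with $\vec\beta$ containing all boundary horoball connections) every spike is decorated and cut off by some $\beta_i$, so no permitted arc survives and $\acp{\Pi,\vec\beta}=\varnothing$; one checks separately that the claimed dimension $N=N_\Pi-1-h$ becomes $-1$ exactly in those cases, matching $\mathbb{S}^{-1}=\varnothing$. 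The spherical cases (b) are the undecorated or once-punctured surfaces with $h$ minimal: here $\acp{\Pi,\vec\beta}$ is the \emph{full} (pruned-closure) arc complex of a polygon, once-punctured polygon, crown or Möbius strip with all spikes undecorated, and these are already known to be PL-spheres of the stated dimension by \cite{ppballs},\cite{ppstrong} (the arc complexes of those surfaces are closed balls, and when there are no decorations the ``outer'' boundary sphere is the whole complex — alternatively this is a standard flip-graph/secondary-polytope fact, e.g.\ the $(n-3)$-sphere of triangulations of an $n$-gon).

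For the inductive step I would run the standard ``link and deletion'' argument on a carefully chosen vertex. Fix a permitted arc $\alpha$ not meeting any $\beta_i$ and let $K=\acp{\Pi,\vec\beta}$. One has the decomposition $K = (\overline{\mathrm{St}}\,\alpha) \cup (K \smallsetminus \mathrm{St}\,\alpha)$ glued along $\mathrm{Lk}\,\alpha$, where $\overline{\mathrm{St}}\,\alpha = \alpha \Join \mathrm{Lk}\,\alpha$ is a cone hence collapsible. The key geometric point is to identify both $\mathrm{Lk}\,\alpha$ and the deletion $K\smallsetminus\mathrm{St}\,\alpha$ with marked arc complexes of \emph{simpler} surfaces: cutting $\Pi$ along $\alpha$ produces one or two surfaces $\Pi'$ (resp.\ $\Pi',\Pi''$) still in our four families, each of strictly smaller $n+r$, on which $\alpha$ becomes a new boundary horoball connection or part of a side; then $\mathrm{Lk}\,\alpha \cong \acp{\Pi',\vec\beta'}$ (or a join $\acp{\Pi',\vec\beta'}\Join\acp{\Pi'',\vec\beta''}$ with $\alpha$ added to the marking on each side), and the deletion retracts onto a subcomplex of the same shape with $\vec\beta$ enlarged by one or two elements. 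Feeding the inductive hypothesis into these pieces and using the elementary PL facts that $\mathbb{B}^a\Join\mathbb{B}^b\cong\mathbb{B}^{a+b+1}$, $\mathbb{S}^a\Join\mathbb{B}^b\cong\mathbb{B}^{a+b+1}$, $\mathbb{S}^a\Join\mathbb{S}^b\cong\mathbb{S}^{a+b+1}$, and that a complex which is the union of a cone on $L$ with a collapsible (onto a point of $L$) complex along $L$ is a ball when $L$ is a ball and a sphere when $L$ is a sphere, one deduces the claim for $K$; the arithmetic $N_\Pi = N_{\Pi'}+N_{\Pi''}+\text{(shift)}$ and $h' = h+1$, $h''=h+1$ bookkeeps the dimension drop correctly, so in almost every configuration $K$ comes out a ball, with a sphere surviving only in the exceptional low-$h$, zero/one-decoration rows.

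I expect the main obstacle to be the case analysis at the cutting step: one must choose $\alpha$ so that the resulting pieces stay within the four admissible families (and do not degenerate below positive area), verify that the subset of $\partial\Pi$ along which $\alpha$ gets ``absorbed'' really yields a \emph{marked} arc complex in the sense of Definition~\ref{def:coll} — in particular that the new marking element is either a boundary horoball connection or the core loop, never an interior arc — and track whether cutting separates (two pieces, a join) or not (one piece, as for a nonseparating arc in the Möbius strip or an arc crossing the core loop in a crown). The Möbius case (4) is the delicate one, because both separating and nonseparating permitted arcs occur and the nonseparating ones turn $\pmob nr$ into a crown $\pholed{n'}{r'}$ after cutting, so one must organize the induction to allow moves \emph{between} the families rather than within a single one; checking that every non-exceptional $(r,h)$ is reached by such a move, and that the exceptional spheres are precisely the $h$-minimal undecorated/punctured strata already handled by the cited results, is where the care is needed. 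A clean way to package all of this is to phrase the induction uniformly as: ``$\acp{\Pi,\vec\beta}$ is a PL-ball of dimension $N_\Pi-1-h$ whenever some undecorated spike exists or some $\beta_i$ is a non-separating loop/connection contributing homology, and a PL-sphere of that dimension precisely in the $h$-minimal, no-decoration cases'', which makes the exceptional rows in the statement fall out automatically.
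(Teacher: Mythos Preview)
Your approach differs substantially from the paper's. The paper does not run an induction on $(\Pi,\vec\beta)$: instead it translates each case into a purely combinatorial ``bicolored diagonal complex'' (red vertices for decorated spikes, blue vertices for sides of $\Pi$ not in $\vec\beta$), observes that $\acp{\Pi,\vec\beta}$ is simplicially isomorphic to the subcomplex spanned by non-red-red diagonals, and then cites ballness results for such subcomplexes from \cite{ppballs}, \cite{ppstrong} (reproved in the Appendix via \emph{strong collapsibility} in the sense of Barmak--Minian together with Whitehead's theorem, not via star/link decomposition). For crowns and M\"obius strips the paper first splits on whether the core curve $c$ lies in $\vec\beta$: if so, one reduces to the punctured-polygon case; if not, one uses a model with an extra internal blue vertex (crown) or the non-orientable diagonal complex (M\"obius).

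Your inductive scheme has a genuine gap at the cutting step. When you cut $\Pi$ along a permitted arc $\alpha$, the copies of $\alpha$ in the pieces are \emph{not} boundary horoball connections or closed loops: by definition every permitted arc has at least one endpoint on a side of $\Pi$, so after cutting, $\alpha$ becomes a new boundary edge, not an element eligible for $\vec\beta$ in the sense of Definition~\ref{def:coll}. Thus the pieces do not carry a marking of the form $(\Pi',\vec\beta')$ within your framework, and the induction does not close as written. (The bicolored model is precisely what repairs this: there, cutting along a diagonal produces smaller bicolored disks, with the new edge absorbed into the boundary and no separate marking to transport.) A second gap is the deletion step: $K\smallsetminus\mathrm{St}\,\alpha$ contains all arcs that \emph{cross} $\alpha$, and you give no retraction of this onto any marked arc complex with ``$\vec\beta$ enlarged'' --- indeed it is unclear what the target should be, since $\alpha$ is not a legal marking element. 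The PL gluing principle you invoke (cone on $L$ glued to a collapsible complex along $L$ yields a ball when $L$ is a ball) is also not a theorem without further hypotheses on how $L$ sits in the second piece. Finally, your base case for $\pdep30$ is miscomputed: there are no permitted arcs, so the complex is $\varnothing=\mathbb{S}^{-1}$ (with $N=-1$), not ``a point~$\mathbb{S}^0$''.
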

\begin{remark}
Spherical cases: in case 1.(b), when $(r,h)=(2,1)$ the $r=2$ decorated spikes are necessarily consecutive, connected by the single horoball connection $\beta_1=\beta_h \in \vec{\beta}$. 
In cases 3.(b) and 4.(b), the collection $\vec{\beta}$ is necessarily reduced to the core curve. 
\end{remark}
\begin{remark} \label{rem:spheres2}
To determine $\mathcal{A}(\Pi, P)$ for a spread subset $P\subset \Pi$ as in Theorem~\ref{thm:mainB}, one needs to apply Proposition~\ref{prop:dimofmac} to the components $(\Pi_\iota, \vec{\beta}_\iota)$ of $\Pi\smallsetminus P$, and combine results by suspension as in the formula~\eqref{eq:suspensionsss}. 
In such applications, the ``empty'' cases (a) of the Proposition never arise, since by definition of spread subsets the component of $\Pi\smallsetminus P$ corresponding to $\Pi_\iota$ intersects $\partial \Pi$ (along an edge that cannot belong to $P$) so $\vec{\beta}_\iota$ cannot be all of $\partial \Pi_\iota$. The ``spherical'' cases (b) occur only in very specific situations: 
\begin{itemize}
\item 1.(b) when $\Pi_\iota$ is cut off from $\Pi$ by a single boundary-parallel horoball connection that skips only undecorated horoballs (hence $(r_\iota, h_\iota)=(2,1)$).
This happens for \emph{every} component $\Pi_\iota \subset \Pi-P$ if and only if $r>0$ and $P$ is largest possible, namely equal to the convex hull of all decorated punctures of $\Pi$. 
Then, the corresponding sphere $f(\mathcal{A}(\Pi, P)) \simeq \mathbb{S}^{n-r-1}$ represents all possible motions of the nondecorated spikes. 
See also Remark~\ref{rem:spheres}. 
For any other $P$ the definition of $\mathcal{A}(\Pi, P)$ as iterated suspension~\eqref{eq:suspensionsss} contains at least one ball term, and is therefore a ball.
\item 3.(b) and 4.(b) when $\Pi$ is undecorated ($r=0$) and $P$ is reduced to the core loop $c$.
Then, $\overline{\Lambda(m)}^+$ is a whole hemisphere $\simeq {\mathbb{B}}^{n-1}$, defined by the lengthening of $c$, with boundary the sphere $f(\mathcal{A}(\Pi, P)) \simeq \mathbb{S}^{n-2}$; note that $\mathcal{A}(\Pi, P)$ is then simplicially equivalent to $\mathcal{A}(\puncp n0)$.
\end{itemize} 
Note that our definition of spread sets $P \subsetneq \Pi$ supposes $P\neq \varnothing$.
If we did allow $P=\varnothing$, then $\mathcal{A}(\Pi, \varnothing)$ would be simply the whole arc complex $\mathcal{A}(\Pi)$, which is a sphere exactly when $\Pi \in \pdep n0, \pdep n1, \puncp n0$ (polygons decorated at most once and undecorated punctured polygons), as reflected also in Cases 1.(b) and 2.(b). The admissible cone is then all of $\mathrm{T}_m(\mathfrak{D}(\Pi))$, because there are no closed geodesics or horoball connections.
\end{remark}

\begin{example} To illustrate Theorem~\ref{thm:mainB} and Proposition~\ref{prop:dimofmac}, in Appendix~\ref{sec:app2} we show a situation (the fully decorated 2-crown) where the projectivized admissible cone is a polyhedron of dimension 3, and list all spread subsets and correponding subcomplexes associated to its faces, edges and vertices. 
\end{example}

\subsection{Proofs of Theorem~\ref{thm:mainB} and Proposition~\ref{prop:dimofmac}} \label{sec:mainproofs}

\begin{proof}[Proof of Theorem~\ref{thm:mainB} assuming Proposition~\ref{prop:dimofmac}]
The first statement of Theorem~\ref{thm:mainB} is just a paraphrase of Proposition~\ref{prop:striphomeo}. 
To prove the second and third statement, consider a spread subset $P \subsetneq \Pi$, with connected components $P_1, \dots, P_k$ and complementary components $\Pi_1, \dots, \Pi_\kappa$.
(Although the proof below covers all cases $\Pi \in \pdep nr, \puncp nr, \pholed nr, \pmob nr$, it may be helpful to first focus on the paradigmatic case of polygons, $\Pi \in \pdep nr$, where the $P_i$ are horoball connections and polygons, and the $\Pi_\iota$ are polygons.)

Any infinitesimal deformation $v$ of the metric $m\in \mathfrak{D}(\Pi)$ induces, by restrictions, infinitesimal deformations $v_i$ and $v_\iota$ of the induced metrics $m_i\in \mathfrak{D}(P_i)$ and $m_\iota \in \mathfrak{D}(\Pi_\iota)$.
(If $P_i$ is a simple closed loop or horoball connection then $v_i$ just records its infinitesimal length variation.) 
These deformations $v_i, v_\iota$ are \emph{compatible}, in the following sense: for every component $\Pi_\iota$ and every $\beta \in \vec{\beta}_\iota$, one has
$$\mathrm{d} l_\beta (m_\iota)(v_\iota) = \mathrm{d} l_\beta (m_i)(v_i)$$ where $i$ is the index of the component $P_i$ of $P$ that contains the boundary component $\beta$ of $\Pi_\iota$. 

Conversely, any system of infinitesimal deformations $v_i\in \mathrm{T}_{m_i} \mathfrak{D}(P_i)$ and $v_\iota \in \mathrm{T}_{m_\iota} \mathfrak{D}(\Pi_\iota)$ defines an infinitesimal deformation $v\in \mathrm{T}_m \mathfrak{D}(\Pi)$ if and only if the $v_i$ and $v_\iota$ are compatible.
This vector $v\in \mathrm{T}_m \mathfrak{D}(\Pi)$ is then~\emph{unique}, because there is only one isometric identification between the boundaries of the $P_i$ and $\Pi_\iota$ that matches the decoration horoballs together. 

(\underline{Remark}: when $P_i$ is a simple closed curve, there is a $1$-parameter family of ways to glue it isometrically to the adjacent component $\Pi_\iota$; however these gluings all produce the \emph{same} metric $m$ because there is \emph{only one} adjacent component $\Pi_\iota$. 
This is a special feature of the surfaces $\pdep nr, \puncp nr, \pholed nr, \pmob nr$, because they do not contain any nonperipheral two-sided simple closed curve $\beta$, i.e.\ no ``twist''. 
It would fail in more general surfaces.)

\smallskip

Compatibility imposes $|\vec{\beta}_\iota|$ linearly independent constraints on the infinitesimal deformation $v_\iota$ of $\Pi_\iota$.
The subspace $V_P \subset \mathrm{T}_m \mathfrak{D}(\Pi)$ of infinitesimal deformations that keep $P$ rigid, i.e.\ induce $v_i=0\in \mathrm{T}_{m_i} \mathfrak{D}(P_i)$ for each $P_i$, therefore satisfies 
\begin{align*} \dim (V_P) &=  0 + \textstyle \sum_{\iota=1}^\kappa N_{\Pi_\iota} - |\vec{\beta}_\iota| & \text{by the above compatibility condition} \\
& = \textstyle \sum_{\iota=1}^\kappa 1+\dim \mathcal{A}(\Pi_\iota, \vec{\beta}_\iota) & \text{by Proposition~\ref{prop:dimofmac} applied to $\Pi_\iota$} \\
& = 1+ \dim \mathcal{A}(\Pi, P) & \text{by \eqref{eq:suspensionsss}.}
\end{align*}
(In the second step we used the observation, from Remark~\ref{rem:spheres2}, that the ``empty'' cases~(a) of Proposition~\ref{prop:dimofmac} never arise for the $\Pi_\iota$.)

Recall the strip map $f: \mathcal{A}(\Pi) \rightarrow \mathbb{P}^+(\mathrm{T}_m \mathfrak{D}(\Pi))$ of~\eqref{eq:stripmap}. 
Clearly $$f(\mathcal{A}(\Pi, P)) ~ \subset ~ V_P^+ \cap \overline{\Lambda(m)}^+,$$ because the arcs in $\mathcal{A}(\Pi, P)$ do not meet $P$. 
The above dimension computation shows that $f(\mathcal{A}(\Pi, P))$ has full dimension in $V_P^+$.

Let us now prove the second point of Theorem~\ref{thm:mainB}.
Consider a boundary point $y\in \partial \overline{\Lambda(m)}^+$ of the positively projectivized cone of weakly admissible deformations.
By Proposition~\ref{prop:striphomeo}, we can write $y=f(x)$ for some (unique) boundary point $x\in \mathcal{A}(\Pi) \smallsetminus \mathcal{PA}(\Pi)$ of the projectivized arc complex.
By definition, the support $|x|$ of $x$ is a collection of disjoint arcs that does not fill~$\Pi$.
Let $\mathcal{H}_x$ be the (nonempty) collection of simple horoball connections and simple closed curves disjoint from~$|x|$.
The face $F_x$ of $\overline{\Lambda(m)}^+$ containing $y=f(x)$ is the intersection of all the (positively projectivized) hyperplanes $H_\beta^+$ for $\beta$ ranging over $\mathcal{H}_x$.

Let $P_x \neq \varnothing$ be the subsurface filled by the union of all elements of $\mathcal{H}_x$.
We claim that $P_x$ is a spread subset disjoint from the support $|x|$.
The connected components of $P_x$ are clearly simple closed curves and horoball connections and fully decorated spiked subsurfaces, by Definition~\ref{def:filled} and the discussion that follows it. 
The definition also makes it clear that $P_x \cap |x|=\varnothing$, because if an arc of $|x|$ crosses a component of $P_x$ then it must also cross one of the horoball connections and curves that this component is the convex hull of.
So we only have to show that every component $\Pi_\iota$ of $\Pi \smallsetminus P_x$ intersects $\partial \Pi$.
If (by contradiction) that were not the case, then every boundary component (horoball connection or boundary loop) of $\Pi_\iota$ would be in $P_x$. 
Therefore no arc of $|x|$ could cross $\Pi_\iota$, lest it intersect one of those boundary components and hence also $P_x$ (contradicting the definition of $P_x$). 
At least one of the boundary components of $\Pi_\iota$ must be a horoball connection, hence we can issue out of one of its endpoints another horoball connection $\beta$ in $\Pi_\iota \sqcup P_x$, intersecting (possibly contained in) $\Pi_\iota$ and disjoint from $|x|$.
Such a $\beta$ should definitionally belong to $\mathcal{H}_x$, hence be contained in $P_x$: contradiction. 
Therefore, $P_x$ is a spread subset disjoint from $|x|$.

By the dimension computation above, we now have that $f(\mathcal{A}(\Pi, P_x))$ contains an open subset of the face $F_x \ni f(x)$; in fact the restriction $f|_{\mathcal{A}(\Pi, P_x)} : \mathcal{A}(\Pi, P_x) \rightarrow F_x$ is an open map. 
For every point $x'\in F_x$ we have $P_{x'}=P_x$, because the supports $|x'|$ and $|x|$ cross the same curves and horoball connections, by definition of $F_x$. 
Therefore $f|_{\mathcal{A}(\Pi, P_x)} : \mathcal{A}(\Pi, P_x) \rightarrow F_x$ is a homeomorphism, proving the second point of Theorem~\ref{thm:mainB}.

Finally, let us now prove the last point~\eqref{eq:inkface} of Theorem~\ref{thm:mainB}.
Point~\eqref{eq:inkface}.(ii) follows from~\eqref{eq:inkface}.(i), since $X\subsetneq Y$ is equivalent to $X\subset Y$ and $X\not\supset Y$ (for any sets $X,Y$). 
The ``$\Leftarrow$'' direction of~\eqref{eq:inkface}.(i) is clear, since $\mathcal{A}(\Pi, P)$ is the subcomplex of arcs that do not meet $P$.
For the ``$\Rightarrow$'' direction, let us prove the contrapositive: suppose $P \not \supset P'$, i.e. there exists $x\in P'_i \cap \Pi_\iota$ (for some components $P'_i$ of $P'$ and $\Pi_\iota$ of $\Pi\smallsetminus P$); and let us find an element of $\mathcal{A}(\Pi, P) \smallsetminus \mathcal{A}(\Pi, P')$. 
It suffices to find a permitted arc $\alpha$ in $\Pi_\iota$ that crosses $P'_i$: this can be done by a routine discussion of cases, using the fact that $\Pi_\iota$ intersects $\partial \Pi$ (i.e.\ $P$ is spread) to allow $\alpha$ to exit the surface.
\end{proof}

\begin{proof}[Proof of Proposition~\ref{prop:dimofmac}]
We start with Case 1.
Given a partially decorated polygon $\Pi\in \pdep nr$ and a collection $\vec{\beta}$ of $h$ sides of $\Pi$ that are horoball connections, we first form a disk $\Pi'$ with $r+(n-h)$ points on its boundary, colored red ($r$ points) and blue ($n-h$ points).
Red corresponds to decorated spikes of $\Pi$, while Blue corresponds to sides of $\Pi$ that do \emph{not} belong to $\vec{\beta}$ (regardless of whether their endpoints are decorated).
There is a unique such $\Pi'$ respecting the cyclic order, and it is not hard to check that $\Pi'$ conversely determines $\Pi$ (a pair of consecutive red points corresponds to a side in $\vec{\beta}$, while a pair of consecutive blue points corresponds to an undecorated spike).

The disk $\Pi'$ has $(r+n-h)(r+n-h-3)/2$ diagonals, which form the vertices of a simplicial sphere $S\simeq \mathbb{S}^{r+n-h-4}$, called the \emph{diagonal complex} (dual to the associahedron), when we let each $(k+1)$-tuple of disjoint diagonals span a $k$-simplex. 
Let $S'$ be the subcomplex of $S$ spanned only by blue-blue and red-blue diagonals.
We have the following lemma, taken from~\cite{ppballs}.
\begin{lemma} \label{lem:blured}
We have $S'=\varnothing$ if all vertices are red; $S'=S$ if there is no red-red diagonal; and $S' \simeq {\mathbb{B}}^{r+n-h-4}$ in all other cases.
\end{lemma}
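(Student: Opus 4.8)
The plan is to analyze the subcomplex $S'$ of the diagonal complex $S$ of the polygon $\Pi'$ by relating it to a genuine arc complex of a smaller surface, then invoking known contractibility/sphericity results. First I would dispose of the two degenerate cases: if all $r+n-h$ boundary points are red, then every diagonal is red-red, so $S'$ is empty; and if there is no red-red diagonal at all — which happens precisely when there are at most two red points, or the reds occupy a configuration where every diagonal touches a blue point — then $S'=S\simeq\mathbb{S}^{r+n-h-4}$ by definition. The substantive case is when there is at least one red-red diagonal but the reds do not exhaust all vertices.

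For the main case, the key idea is to recognize $S'$ as the link, or more precisely the "avoiding" subcomplex, of the subcomplex $R$ of $S$ spanned by the red-red diagonals. Concretely, collapse each maximal red arc of consecutive red points to a single red point; this is harmless because no allowed (blue-touching) diagonal separates two consecutive reds. After this collapse one obtains a polygon $\Pi''$ whose red points are "isolated" (no two consecutive), and $S'$ for $\Pi'$ is simplicially isomorphic to the corresponding subcomplex for $\Pi''$. Now for $\Pi''$ I would set up a bijection between blue-blue and red-blue diagonals of $\Pi''$ and the permitted arcs of an auxiliary object: treat each blue point as a side and each isolated red point as a decorated spike, so that $\Pi''$ becomes (a combinatorial model of) a decorated polygon, and blue-blue / red-blue diagonals correspond exactly to permitted arcs in the sense of the two conditions in Section~\ref{sec:defs}. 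Under this correspondence $S'$ becomes a full arc complex $\mathcal{A}(\Pi'')$ of a decorated polygon with at least one decorated spike and at least two undecorated spikes; by Proposition~\ref{prop:striphomeo} (or the cited results in~\cite{ppballs}) this is a closed ball, and a dimension count gives $\dim=r+n-h-4$, yielding $S'\simeq\mathbb{B}^{r+n-h-4}$.

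An alternative, more self-contained route avoids citing the arc-complex ball theorem and instead argues directly on $S$: the red-red diagonals span a subcomplex $R\subset S$, and $S'$ is the subcomplex of simplices disjoint from every vertex of $R$. Using that $S$ is a flag simplicial sphere (the boundary complex of the simplicial associahedron's dual), one can show $S'$ is the complement-deformation-retract of a regular neighborhood of $R$ in $S$; since $R$ is nonempty and not all of $S$, and $R$ is itself contractible (it is a join-like complex of the red sub-polygon's diagonal complexes across the blue gaps), a Mayer–Vietoris / Alexander-duality argument on the sphere $S$ identifies $S'$ up to homotopy with a ball, and one checks it is PL of the stated dimension by exhibiting a top simplex (a maximal triangulation of $\Pi''$ using only blue-touching diagonals, which exists as soon as there is at least one blue point, i.e.\ $h<n$).

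The main obstacle I anticipate is the bookkeeping in the collapse step and the verification that the "no diagonal separates off a single undecorated/blue region" convention matches the exclusion in condition~(1) of the arc definition — i.e.\ making the dictionary between red/blue diagonals of $\Pi'$ and permitted arcs precise enough that the sphericity of the full diagonal complex $S$ transfers correctly to give a ball (and not, say, a sphere-with-a-hole that fails to be a manifold). Getting the boundary cases of the induction on the number of blue points right — especially the transition between the "$S'=S$" regime and the "$S'$ is a ball" regime — is where the argument needs the most care.
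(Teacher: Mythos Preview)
The paper does not actually prove this lemma: it is quoted from~\cite{ppballs} (and restated without proof as Theorem~\ref{thm: bicolpoly} in Appendix~\ref{sec:app1}). So there is no in-paper argument to compare against, and your task is really to supply a self-contained proof.

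Your proposal has a genuine gap. The collapse step in your first approach is false as stated: collapsing a maximal run of consecutive red vertices to a single red vertex does \emph{not} give a simplicial isomorphism on the allowed-diagonal subcomplex. Take a hexagon with vertices $r_1,r_2,r_3,b_1,b_2,b_3$ in cyclic order. The only red-red diagonal is $r_1r_3$, so $S'$ is spanned by the remaining eight diagonals and is a $2$-ball, as the lemma predicts. After your collapse you get a square with one red and three blue vertices; its diagonal complex has only two vertices and is $\mathbb{S}^0$. The point is that distinct allowed diagonals $r_1b$ and $r_2b$ get identified under the collapse, so information is lost; the fact that no allowed diagonal \emph{separates} $r_1$ from $r_2$ is irrelevant. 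There is also a circularity problem: you propose to conclude ballness by invoking~\cite{ppballs} or Proposition~\ref{prop:striphomeo}, but the result you are trying to prove \emph{is} the relevant statement from~\cite{ppballs}, and Proposition~\ref{prop:striphomeo} only transfers the question to knowing that $\padm$ is a ball, which is not established independently of this circle of ideas.

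Your second approach is closer to a viable strategy but is not yet a proof. The subcomplex $R$ spanned by red-red diagonals is not obviously contractible in general (its structure depends on how the red blocks are distributed), and even granting that, Alexander duality in a simplicial sphere gives homological information, not a PL-ball structure; you would still need a manifold-with-boundary argument. The route actually taken in~\cite{ppballs}, and mirrored for the other surface types in Appendix~\ref{sec:app1}, is to prove collapsibility of $S'$ directly (via an explicit sequence of elementary collapses, or strong collapses in the sense of Barmak--Minian) and then invoke Whitehead's theorem that a collapsible combinatorial manifold is a ball. If you want a self-contained argument, that is the template to follow.
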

Case 1 of Proposition~\ref{prop:dimofmac} follows immediately from the lemma and from the definition of the coloring, because $S'$ is by construction simplicially equivalent to $\mathcal{A}(\Pi, \vec{\beta})$. 
(Note that when $(r,h)=(2,1)$ there are two red vertices but they are consecutive, hence do not span a diagonal.)

\smallskip

The proof of Case 2 is similar: given a surface $\Pi\in \puncp nr$ and the collection $\vec{\beta}$ of $h$ boundary horoball connections of $\Pi$, we build a punctured disk $\Pi'$ with $r$ red and $n-h$ blue vertices and define its diagonal complex $S\simeq \mathbb{S}^{r+n-h-2}$, dual to the cyclohedron, with $(r+n-h)(r+n-h-1)$ vertices (loops around the puncture connecting a vertex to itself do count as ``diagonals'', but edges of $\Pi'$ do not). We define similarly $S'$ as the subcomplex spanned by blue-blue and red-blue diagonals. 
The analogue of Lemma~\ref{lem:blured} holds, with only ${\mathbb{B}}^{r+n-h-4}$ changed to ${\mathbb{B}}^{r+n-h-2}$: see~\cite{ppballs}.
Case 2 follows.

\smallskip

For Case 3, we distinguish two subcases. First, if the boundary loop $c$ belongs to $\vec{\beta}$, then 
$\mathcal{A}(\pholed nr, \vec{\beta})$ identifies with 
$\mathcal{A}(\puncp nr, \vec{\beta}\smallsetminus \{c\})$ so we are reduced to Case 2; this covers in particular the implications of 3.(a) and 3.(b). 
If $c \notin \vec{\beta}$, we build from $\Pi \in \pholed nr$ a disk $\Pi'$ with $r$ red and $n-h$ blue vertices on its boundary, and an extra  blue vertex in the center.
The diagonal complex $S$ is now formed by boundary-to-boundary \emph{and} boundary-to-center arcs, $(r+n-h)^2$ arcs in total. 
It is shown in~\cite{ppstrong} (see also Theorem~\ref{strcollhole} below) that the subcomplex $S'\subset S$ spanned by blue-blue and blue-red arcs is always homeomorphic to ${\mathbb{B}}^{r+n-h-1}$ (in particular, it always contains the top-dimensional simplex spanned by all center-to-boundary arcs).
Case 3 follows.

\smallskip

For Case 4, similarly if the core curve $c$ belongs to $\vec{\beta}$, then $\mathcal{A}(\pmob nr, \vec{\beta})$ identifies with $\mathcal{A}(\puncp nr, \vec{\beta}\smallsetminus \{c\})$ because the complement of the core curve of the M\"obius strip is an annulus; and therefore we are reduced to Case 2: this covers in particular the implication in Case 4.(b).
If $c \notin \vec{\beta}$, we consider a M\"obius strip $\Pi'$ with $r$ red and $n-h$ blue vertices on its boundary, and the associated diagonal complex $S$ (containing $(r+n-h)(r+n-h-1)$ separating diagonals, which are isotopic into the boundary without being boundary edges; and $(r+n-h)(r+n-h+1)/2$ nonseparating ones, which cross the core curve). 
It is shown in~\cite{ppstrong} (see also Theorem~\ref{thm: strcoll} below) that unless all vertices are red, the subcomplex $S'\subset S$ spanned by blue-blue and red-blue  diagonals is homeomorphic to~${\mathbb{B}}^{r+n-h-1}$.
Case 4 follows, and Proposition~\ref{prop:dimofmac} is proved.

Note that the triangulations of closed balls given by the above references are all \emph{collapsible}, a strong combinatorial condition. 
We do not use that fact in this paper.
\end{proof}

\section{The facets of $\padm$} \label{sec:walls}
\noindent
Consider a hyperbolic surface $\Pi \in \pdep nr, \puncp nr, \pholed nr, \pmob nr$, a hyperbolic metric $m$ on $\Pi$, and the polyhedron $\overline{\Lambda(m)}^+$ of weakly admissible deformations.
By definition, a \emph{facet} of $\overline{\Lambda(m)}^+$ is a face of codimension $1$.
In this section we study which horoball connections (and geodesic loops) define facets: in other words, we classify the situations in which \eqref{eq:suspensionsss} gives a complex of dimension $N_\Pi-2$.
\begin{figure}[h!]
\centering
\includegraphics[width=0.8\linewidth]{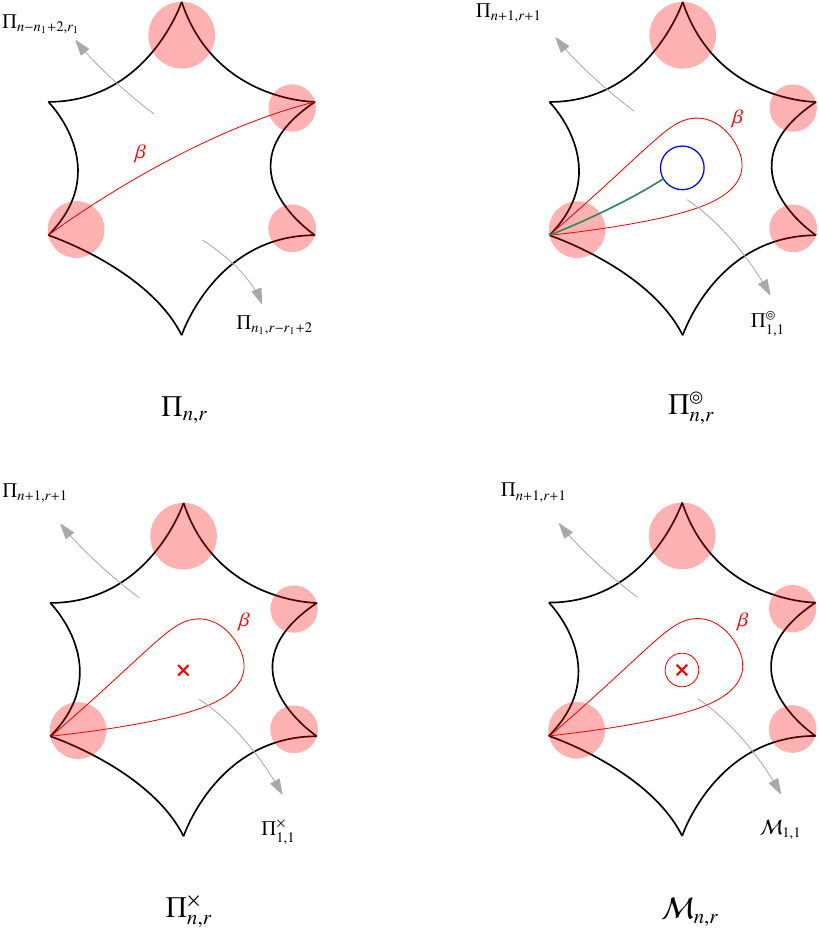}
\caption{Decomposition of the four surfaces by a horoball connection $\be$.}
\label{fig: decompose}
\end{figure}

\begin{lemma}\label{lem: simple}
The hyperplane $H_\be ^+$ intersects $\padm$ along a facet for every simple horoball connection $\be$ except when $\Pi=\pmob nr$ and the horoball connection $\be$ is maximal (Definition~\ref{def:maximalHC}).
 \end{lemma}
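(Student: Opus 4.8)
The plan is to reduce the statement to a dimension count using Theorem~\ref{thm:mainB} and Proposition~\ref{prop:dimofmac}. Recall that a simple horoball connection $\be$ (or the core loop, but the lemma is about horoball connections) gives rise to the spread subset $P=\be$ — indeed, a simple horoball connection is its own filled subsurface, and since $\Pi\smallsetminus \be$ has components that all touch $\partial\Pi$ in each of the four families (this is where we use that $\Pi$ has no interior, using the classification in \S\ref{sec:toptype}), $P=\be$ is spread. By Theorem~\ref{thm:mainB}, $H_\be^+\cap\padm = f(\mathcal{A}(\Pi,\be))$ is the face associated to $P=\be$, and this is a facet precisely when $\dim\mathcal{A}(\Pi,\be)=N_\Pi-2$, equivalently (via the first display in the proof of Theorem~\ref{thm:mainB}) when $\dim V_\be = N_\Pi-1$, i.e. the single compatibility constraint $\mathrm{d}l_\be = 0$ is the only constraint — which is automatic — so really the content is that $\mathcal{A}(\Pi,\be)$ has the expected dimension and is not lower-dimensional or empty.

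Concretely, I would split into the two cases of Remark~\ref{rem:sep}. If $\be$ is a \emph{side} of $\Pi$, then $\Pi\smallsetminus\be$ has one component, namely $\Pi$ itself with $\be$ removed, which lies in $\pdep{n'}{r'}$ or $\puncp{}{}$ etc. with $N_{\Pi\smallsetminus\be}=N_\Pi-1$ (we lose one spike's worth of dimension but the decoration count... here I'd check the bookkeeping carefully against the table in \S\ref{sec:notation}), and $\vec\beta_\iota$ has one element $\be$, so $\mathcal{A}(\Pi,\be)=\mathcal{A}(\Pi\smallsetminus\be,\{\be\})$ has dimension $N_{\Pi\smallsetminus\be}-1-1 = N_\Pi-3$... — so in fact I need to be more careful: the right statement is that removing a side does not change $N_\Pi$ in the relevant way, or rather that $\dim\mathcal A(\Pi,\be) = N_\Pi-2$ is what Proposition~\ref{prop:dimofmac} delivers when applied with $h=1$ to the appropriate decomposition; I would run the formula $N := N_{\Pi_\iota}-1-h_\iota$ summed over components plus suspension as in the proof of Theorem~\ref{thm:mainB} and simply verify it equals $N_\Pi-2$, i.e. codimension $1$. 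If $\be$ is separating but not a side, then $\Pi\smallsetminus\be$ has two components $\Pi_1,\Pi_2$, each carrying $\vec\beta_\iota=\{\be\}$, and $\mathcal{A}(\Pi,\be)=\mathcal{A}(\Pi_1,\be)\Join\mathcal{A}(\Pi_2,\be)$; here $\dim(A\Join B)=\dim A+\dim B+1$, and I sum up $N_{\Pi_1}-2$ and $N_{\Pi_2}-2$ plus $1$, checking against $N_{\Pi_1}+N_{\Pi_2}+1 = N_\Pi$ (the $+1$ accounting for the length-of-$\be$ direction) to land on $N_\Pi-2$.

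The crucial point — and the reason for the exception — is that Proposition~\ref{prop:dimofmac} only gives a ball (or sphere) of the stated dimension in its cases (b) and (c); in its case (a) the marked arc complex is \emph{empty}, which would make $\mathcal{A}(\Pi,\be)$ drop dimension (a join with $\varnothing$ is $\varnothing$) and hence \emph{not} a facet. Case (a) occurs exactly when one of the two components $\Pi_\iota$ has all its spikes decorated and $\vec\beta_\iota$ is all of its boundary horoball connections. For $\Pi\in\pmob nr$ the component cut off by a \emph{maximal} $\be$ is, by Definition~\ref{def:maximalHC}, homeomorphic to $\pdep{n+1}{r+1}$ — a \emph{fully decorated} polygon, and $\be$ being the unique horoball connection on its boundary side, this is precisely case 1.(a) with $(r,h)=(n',n')=(r+1,1)$... wait, one needs $h=n'$ for case 1.(a), so I should double-check: a maximal $\be$ skips all spikes on one side, meaning that side of $\Pi_\iota$ is a single arc $\be$ and $\Pi_\iota$ has no other sides except decorated spikes — so indeed $n'=r'$ and $h$ counts all boundary horoball-connection sides, which is $1$; so this is case 1.(a) iff $n'=1$, i.e. $\Pi_\iota=\pdep{1}{1}$... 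I will need to recheck the precise indexing of Definition~\ref{def:maximalHC} (``$\pdep{n+1}{r+1}$'') against which $(r',h')$ triggers emptiness in Prop.~\ref{prop:dimofmac}, and verify that in the other three families $\pdep{}{}, \puncp{}{}, \pholed{}{}$ no simple horoball connection ever produces such a degenerate component, because either $\Pi$ always retains an undecorated spike or a boundary loop on both sides, or the side/separating dichotomy of Remark~\ref{rem:sep} rules it out. That verification — carefully matching the geometric notion ``maximal'' to the combinatorial trigger of case (a), across all four families — is the main obstacle; the dimension arithmetic is routine once the case analysis is set up.
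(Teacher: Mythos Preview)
Your overall strategy---reduce to a dimension count via Proposition~\ref{prop:dimofmac} on the pieces of $\Pi\smallsetminus\be$---is the same as the paper's, but the execution has a genuine gap that happens to bite exactly at the exceptional cases.

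The claim that $P=\be$ is always a spread subset is false. As the Remark immediately following Lemma~\ref{lem: simple} points out, when $\be$ is maximal in $\pmob nr$ it bounds off a $\pmob 11$ disjoint from $\partial\Pi$, and when $\be$ is maximal in $\puncp nr$ it bounds off a $\puncp 11$ disjoint from $\partial\Pi$: in both cases the second bullet of Definition~\ref{def: spread} fails. So you cannot invoke Theorem~\ref{thm:mainB} with $P=\be$ uniformly. The paper sidesteps this by never appealing to Theorem~\ref{thm:mainB}: it only uses the \emph{inequality} $\dim(H_\be^+\cap\padm)\ge \dim\big(\mathcal A(\Pi^1,\be)\Join\mathcal A(\Pi^2,\be)\big)$ (coming from linear independence of strip deformations), and then checks via Proposition~\ref{prop:dimofmac} that the right-hand side equals $N_\Pi-2$ in all cases except Möbius maximal, where one join factor is empty.

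Your identification of the exception is also off. You look for case~1.(a) of Proposition~\ref{prop:dimofmac} (the polygon side being empty) and get tangled in the indexing; the relevant emptiness is case~4.(a): the $\pmob 11$ side has $(r,h)=(1,1)$ with $\vec\beta=\{\be\}$ equal to all boundary horoball connections. Crucially, your framework would also flag the punctured-polygon maximal case (where $\be$ is likewise not spread) as an exception, but in fact it \emph{is} a facet---the correct spread subset there is the whole rigid monogon $P=\puncp 11$, and $H_\be^+$ coincides with $H_{\partial P}^+$ on $\padm$. Distinguishing these two maximal cases is exactly the content of the lemma, and your sketch does not. Finally, you do not address the nonseparating case (simple $\be$ crossing the core of $\pmob nr$), which the paper treats separately by cutting to a polygon $\pdep{n+2}{r+2}$ with two marked sides.
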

\begin{remark}
There are exactly two types of simple horoball connections that are \emph{not} spread subsets: 

\noindent $\bullet$~ When $\beta$ is maximal in a Möbius strip $\Pi=\pmob nr$, hence bounds off a 1-spiked Möbius strip $\pmob 11$ disjoint from $\partial \Pi$ (i.e. the exceptions in Lemma~\ref{lem: simple});

\noindent $\bullet$~ When $\beta$ is maximal in punctured polygon $\Pi=\puncp nr$, hence bounds off a punctured monogon~$\puncp 11$. Such a $\beta$ \emph{does} define a facet of the polyhedron $\overline{\Lambda(m)}^+$ (associated to the spread subset $P:=\puncp 11$), essentially because $\puncp 11$ is rigid; but see Lemma~\ref{lem:nonsimplefacet} below for a different way in which such $\beta, P$ are exceptional.
\end{remark}
 \begin{proof}[Proof of Lemma~\ref{lem: simple}]
 Let $\beta$ be a simple horoball connection.
 First, we suppose that $\beta$ is separating.
 Then $\beta$ decomposes the surface into two subsurfaces which we denote by $\Pi^1$ and $\Pi^2$. 
  The projective hyperplane $H_\be ^+$, of dimension $N-2$, intersects $\padm$ along a facet if and only if
  \[\dim \left ( H_\be ^+\cap \padm \right ) = N-2 .\]
The left hand side is at least $\dim \left (\acp{\Pi^1,  \be} \Join \acp{\Pi^2, \be} \right )$: indeed, 
any point in $\acp{\Pi^1,  \be} \Join \acp{\Pi^2, \be}$ gives rise to a strip deformation whose support is disjoint from $\beta$, hence which belongs to $H_\be ^+\cap \padm$; and the component strip deformations are linearly independent by \cite{ppstrip}.

Thus, let us compute the dimensions of $\acp{\Pi^1,  \be}$ and $\acp{\Pi^2, \be}$, using Proposition~\ref{prop:dimofmac}.
We treat the four types of surfaces in turn.
\begin{itemize}
\item $\Pi\in \pdep nr$: In this case, $\dim H_\be ^+=N-2=n+r-5$. 
There exist integers $n_1$ and $r_1$ such that $\Pi^1\in \pdep{ n_1}{r-r_1+2}$ and $\Pi^2\in \pdep{n- n_1+2}{r_1}$.
Then we have that $$
\begin{array}{l}
	\acp{\Pi^1, \be} \simeq \ball{n_1+r-r_1+2-4-1} = \ball {n_1+r-r_1-3},\\
	\acp{\Pi^2, \be} \simeq \ball{n-n_1+r_1-3}.
\end{array}
$$
Here, we have that $3\leq n_1 \leq n-1$.
Since $\be$ connects two distinct decorated spikes, we also have that $2\leq r_1\leq r$.
So for $i=1,2$, we get that $\dim \acp{\Pi^i,  \be}\geq 0.$
Hence,
\[  
\begin{array}{l}
\dim \left ( \acp{\pdep{n- n_1+2}{r_1} , \be} \Join \acp{\pdep{n_1}{r-r_1+2}, \be} \right )\\
=(n-n_1+r_1-3)+(n_1+r-r_1-3)+1\\
=n+r-5.
\end{array}
  \] 
Therefore, $ H_\be ^+$ intersects $\padm$ along a facet. 
This actually finishes the case $\Pi\in \pdep nr$, as all simple horoball conections are separating in such $\Pi$.

\item $\Pi\in \puncp nr$: In this case, $\dim H_\be ^+=n+r-3$. 
There exist integers $n_1$ and $r_1$ such that one of the subsurfaces is an ideal polygon $\Pi^1 \in \pdep{n- n_1+2}{r_1}$ and the other one is a once-punctured polygon $\Pi^2\in\puncp{ n_1}{r-r_1+2}$. Again, we have that $$
\begin{array}{l}
\acp{\Pi^1,  \be} \simeq \ball {n-n_1+r_1-3},\\
\acp{\Pi^2,  \be} \simeq \ball{n_1+r-r_1+2-2-1}=\ball{n_1+r-r_1-1}.
\end{array}
$$ 
Here, we have $1\leq r_1\leq r+1$ and $1\leq n_1 \leq n-1$.
 When $n_1=1$ and $r_1=r+1$, we have that $\acp{\Pi^2,  \be}$ is empty.
This is illustrated in the right panel of the first row of Fig.~\ref{fig: decompose}.
In that case, we get $\dim H_\be ^+ \cap \padm=n-1+r+1-3=n+r-3$. 

When $n_1\geq 2$ and $r_1\leq r$, we have
\[  
\begin{array}{l}
	\dim \left ( \acp{\pdep{n- n_1+2}{r_1},  \be} \Join \acp{\puncp{ n_1}{r-r_1+2},  \be} \right )\\
	=(n-n_1+r_1-3)+(n_1+r-r_1-1)+1\\
	=n+r-3.
\end{array}
\] 
Hence we have that $ H_\be ^+$ intersects $\padm$ along a facet.
Again this finishes the case $\Pi\in \puncp nr$, as all simple horoball connections are separating for such $\Pi$.

\item $\Pi\in \pholed nr$:  In this case, $\dim H_\be ^+=n+r-2$.
There exist integers $n_1, r_1$ such that one of the subsurfaces is an ideal polygon $\Pi^1 \in \pdep{n- n_1+2}{r_1}$ and the other one is a one-holed polygon $\Pi^2\in\pholed{ n_1}{r-r_1+2}$.
Again, we have that $$
\begin{array}{l}
	\acp{\Pi^1,  \be} \simeq \ball {n-n_1+r_1-3},\\
	\acp{\Pi^2,  \be} \simeq \ball{n_1+r-r_1+2-1-1}=\ball{n_1+r-r_1}.
\end{array}
$$ 
Here, we have $1\leq r_1\leq r+1$ and $1\leq n_1 \leq n-1$.
 This implies that the two marked arc complexes are always non-empty.
The case $n_1=1, r_1=r+1$ is illustrated in the left panel of the second row of Fig.~\ref{fig: decompose}.
In that case, the marked arc complex $\acp{\Pi^2,  \be}$ is just one point given by the isotopy class of the green arc in the figure. 

Thus, we have
\[  
\begin{array}{l}
	\dim \left (\acp{\pdep{n- n_1+2}{r_1}, \be} \Join \acp{\puncp{ n_1}{r-r_1+2},  \be} \right )\\
	=(n-n_1+r_1-3)+(n_1+r-r_1)+1\\
	=n+r-2.
\end{array}
\] 
Hence we have that $ H_\be ^+$ intersects $\padm$ along a facet.
Again this finishes the case $\Pi\in \pholed nr$, as all simple horoball connections are separating in such $\Pi$.
\begin{figure}[h!]
	\centering
\includegraphics[width=0.9
\linewidth]{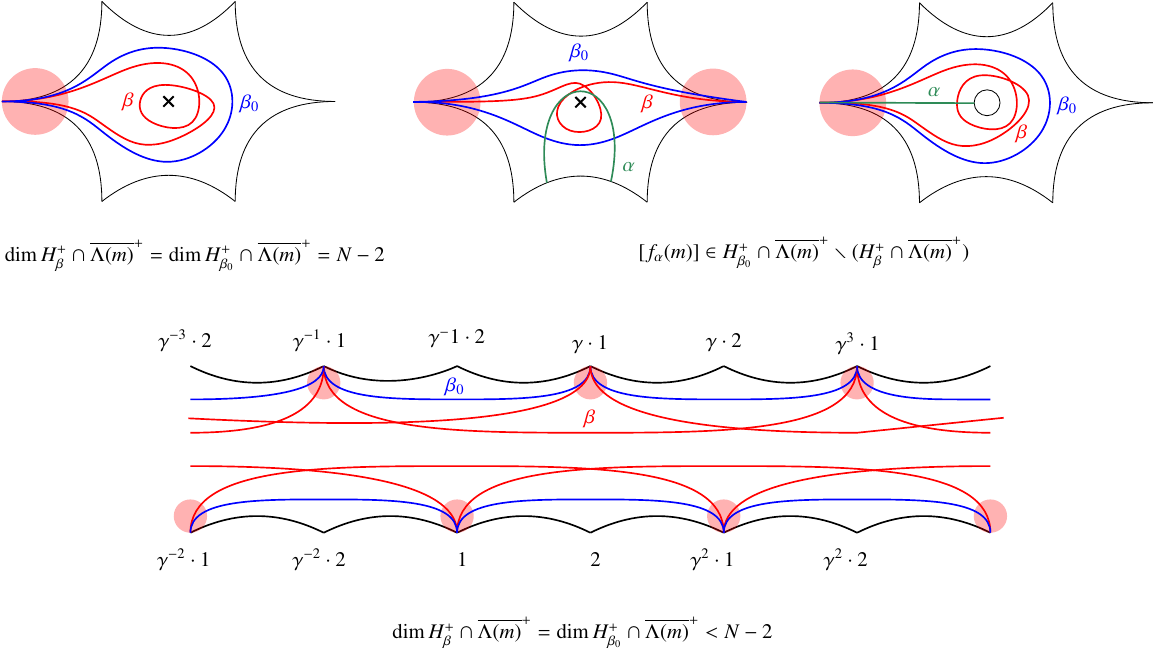}
	\caption{Lemma \ref{lem:nonsimplefacet} -- Top left: The only type of case when $H_\be^+$, for a non-simple horoball connection, forms a facet of $\padm$. Top middle and right: The hyperplane $H_\be^+$ intersects $\padm$ along a non-facet.
Bottom panel: Here the universal cover of $\pmob 21$ is shown, along with the lifts of $\be, \be_0$. Any permitted arc disjoint with $\be$ is disjoint with $\be_0$ and viceversa. But the corresponding hyperplanes $H_\beta^+,H_{\beta_0}^+$ do not form facets. }

\label{fig: fillingsurf}
\end{figure}
\item $\Pi\in \pmob nr$:  In this case, $\dim H_\be ^+=n+r-2$.
There exist integers $n_1, r_1$ such that one of the subsurfaces is an ideal polygon $\Pi^1 \in \pdep{n- n_1+2}{r_1}$ and the the other is a M\"obius strip $\Pi^2\in\pmob{n_1}{r-r_1+2}$.
Again, we have that $$
\begin{array}{l}
	\acp{\Pi^1,  \be} \simeq \ball {n-n_1+r_1-3},\\
	\acp{\Pi^2,  \be} \simeq \ball{n_1+r-r_1+2-1-1}=\ball{n_1+r-r_1}.
\end{array}
$$ 
Here, we have $1\leq r_1\leq r+1$ and $1\leq n_1 \leq n-1$.
When $n_1=1$ and $r_1=r+1$, we have that the marked arc complex $\acp{\Pi^2, \be}$ is empty (illustrated in the right panel of the second row of Fig.~\ref{fig: decompose}).
In that case, we get $\dim H_\be ^+ \cap \padm=n+r-3<\dim H_\be ^+$.
So in this case, the hyperplane $H_\be ^+$ does not meet the boundary of the admissible cone 
along a facet.
When $n_1\geq 1,$ we have that
\[  
\begin{array}{l}
	\dim \left ( \acp{\pdep{n- n_1+2}{r_1},  \be} \Join \acp{\puncp{ n_1}{r-r_1+2}, \be } \right )\\
	=(n-n_1+r_1-3)+(n_1+r-r_1)+1\\
	=n+r-3.
\end{array}
\] 
Hence we have that $ H_\be ^+$ intersects $\padm$ along a facet for every separating horoball connection, except when $\be$ is maximal.
\end{itemize}
\begin{figure}[h!]
	\centering
	\includegraphics[width=0.9\linewidth]{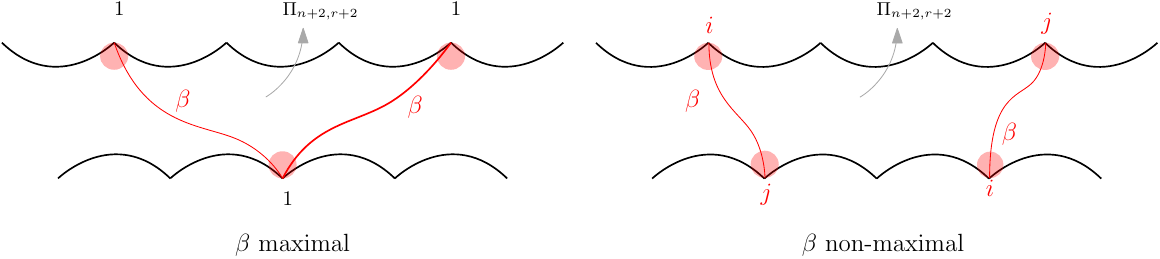}
	\caption{Decomposition of a M\"obius strip by a non-separating horoball connection $\be$.}
	\label{fig: decompose2}
\end{figure}

It remains to treat the case that $\beta$ is non-separating. 
We can then assume that $\Pi\in \pmob nr$ (Möbius strip), and that $\be$ is a horoball connection joining two decorated spikes $1\leq i\leq j\leq n$ across the core loop, and $\dim H_\be ^+=N-2=n+r-2$. See Fig.~\ref{fig: decompose2} for the two cases: $i=j=1$ and $i<j$.
In either case, the complement is a decorated ideal polygon $\pdep_{n+2,r+2}$ with two marked horoball connections.
Then, we have that $\dim \hyp\cap\padm= \dim (\pdep{n+2}{r+2}, \vec{\be})=n+r-2$, where $\vec{\be}$ consists of two copies of $\be$.
Hence $\hyp$ forms a facet of $\padm$ for every non-separating horoball connection.
This concludes the proof.
 \end{proof}

\begin{lemma}
For two distinct simple horoball connections $\be, \be'$ that define facets of $\padm$, these facets are themselves distinct.
\end{lemma}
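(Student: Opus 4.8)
The plan is to show that if $\beta \neq \beta'$ are distinct simple horoball connections both defining facets of $\overline{\Lambda(m)}^+$, then the corresponding facets $H_\beta^+ \cap \overline{\Lambda(m)}^+$ and $H_{\beta'}^+ \cap \overline{\Lambda(m)}^+$ are distinct. By Theorem~\ref{thm:mainB}, a facet is of the form $f(\mathcal{A}(\Pi, P))$ for a unique spread subset $P$, and distinct spread subsets give distinct facets by~\eqref{eq:inkface}. So it suffices to identify, for each simple horoball connection $\beta$ defining a facet, the spread subset $P_\beta$ attached to that facet, and then argue $\beta \neq \beta' \Rightarrow P_\beta \neq P_{\beta'}$.

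First I would identify $P_\beta$. From the proof of Lemma~\ref{lem: simple}: if $\beta$ is separating and the facet is realised, then (unwinding the argument there) a point $x$ in the interior of the facet has support $|x|$ filling both complementary pieces $\Pi^1, \Pi^2$ of $\Pi \smallsetminus \beta$, so the collection $\mathcal{H}_x$ of curves/horoball connections disjoint from $|x|$ is exactly $\{\beta\}$ and hence $P_\beta = \beta$ itself (a simple horoball connection, so $P_\beta$ equals the geodesic $\beta$ as a subset). The one subtlety flagged in the remark before Lemma~\ref{lem: simple} is the case $\Pi = \puncp nr$ with $\beta$ maximal: there $\beta$ is not a spread subset, and the facet it defines corresponds instead to $P_\beta = \puncp 11$, the punctured monogon it bounds off — but this is still a perfectly well-defined spread subset. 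Similarly for $\beta$ non-separating in a Möbius strip: $\Pi \smallsetminus \beta$ is a single polygon, which $|x|$ fills, so again $\mathcal{H}_x = \{\beta\}$ and $P_\beta = \beta$. In every case $P_\beta$ is either the curve $\beta$ itself, or (in the $\puncp nr$ maximal case) the rigid subsurface $\puncp 11$ it cuts off; in the latter case $\beta = \partial(\puncp 11)$ is recovered as the unique horoball connection in the boundary of $P_\beta$.

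Next I would conclude. If $\beta, \beta'$ are distinct simple horoball connections defining facets, I claim $P_\beta \neq P_{\beta'}$. If neither is of the exceptional $\puncp nr$-maximal type, then $P_\beta = \beta \neq \beta' = P_{\beta'}$ as subsets of $\Pi$, immediately. If exactly one, say $P_\beta = \puncp 11$ and $P_{\beta'} = \beta'$, then $P_\beta$ is a positive-area subsurface while $P_{\beta'}$ is a geodesic segment, so they cannot coincide. If both are of the exceptional type, $P_\beta = \puncp 11$ cut off by $\beta$ and $P_{\beta'} = \puncp 11$ cut off by $\beta'$; then $\partial P_\beta = \beta \cup (\text{arc of }\partial\Pi)$ and $\partial P_{\beta'} = \beta' \cup (\text{arc of }\partial\Pi)$, and since $\beta \neq \beta'$ these subsurfaces are distinct. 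In all cases $P_\beta \neq P_{\beta'}$, so by the second and third bullets of Theorem~\ref{thm:mainB} the facets $f(\mathcal{A}(\Pi, P_\beta))$ and $f(\mathcal{A}(\Pi, P_{\beta'}))$ are distinct.

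The only real obstacle is the bookkeeping in the first step: correctly pinning down that $\mathcal{H}_x = \{\beta\}$ for $x$ interior to the facet, i.e.\ that no *other* simple horoball connection or closed geodesic can be disjoint from a filling collection of arcs on the complementary pieces. This follows because any such extra $\gamma$ would, being disjoint from $|x|$, have to live inside the (rigid or polygonal) complementary pieces, and one checks directly that those pieces contain no curve or horoball connection other than their boundary $\beta$ — exactly the kind of routine case analysis already carried out in the proof of Lemma~\ref{lem: simple} and in the proof of the second point of Theorem~\ref{thm:mainB} (the construction of $P_x$). So the lemma reduces cleanly to facts already in hand.
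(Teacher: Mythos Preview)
Your argument is correct and follows the first of the two routes the paper itself suggests (``from the second and third points of Theorem~\ref{thm:mainB}''), carrying out in detail the identification of the spread subset $P_\beta$ attached to each facet --- including the exceptional $\puncp nr$-maximal case --- and then invoking~\eqref{eq:inkface} to conclude. The paper's own proof is a single sentence and also offers a more elementary alternative: simply exhibit a permitted arc $\alpha$ crossing $\beta$ but not $\beta'$; then $[f_\alpha(m)]$ lies in $H_{\beta'}^+\cap\padm$ but not in $H_\beta^+\cap\padm$, so the facets differ. That second route sidesteps the bookkeeping you flag at the end (determining $\mathcal{H}_x$ exactly) and avoids the case split on the exceptional $\puncp{1}{1}$ situation, at the cost of a small existence argument for the arc $\alpha$. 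Both approaches are valid; yours makes the connection to the face lattice of Theorem~\ref{thm:mainB} more explicit, while the paper's alternative is shorter.
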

\begin{proof}
This follows e.g.\ from the second and third points of Theorem~\ref{thm:mainB}, or from the fact that we can find a permitted arc that crosses $\beta$ but does not cross $\beta'$.
\end{proof}

\begin{lemma} \label{lem:nonsimplefacet}
	Suppose $\Pi \in \pdep nr, \puncp nr, \pholed nr, \pmob nr$.
    Given a non-simple horoball connection $\be$ of the surface $\Pi$, the hyperplane $\hyp$ does not form a facet of $\padm$, unless $\Pi\in \puncp nr$ and $\be$ is a horoball connection with identical endpoints that turns $k\geq2 $ times around the puncture.
\end{lemma}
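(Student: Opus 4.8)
The strategy is to compute $\dim(H_\beta^+ \cap \padm)$ directly via the subsurface $\Pi_\beta$ filled by the non-simple horoball connection $\beta$ (Definition~\ref{def:filled}), and compare it to $N_\Pi - 2$. The point is that any arc disjoint from $\beta$ is also disjoint from $\Pi_\beta$, so the arcs contributing to the face $H_\beta^+ \cap \padm$ all live in the complement $\Pi \smallsetminus \Pi_\beta$. More precisely, $H_\beta^+ \cap \padm$ is a face of the polyhedron, and by the second point of Theorem~\ref{thm:mainB} it equals $f(\mathcal{A}(\Pi, P))$ for the spread subset $P$ obtained as the filled subsurface of \emph{all} simple horoball connections and closed curves disjoint from $\beta$ (equivalently, disjoint from $\Pi_\beta$). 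This $P$ contains $\Pi_\beta$ strictly whenever $\Pi_\beta$ is a genuine subsurface (since its decorated boundary spikes support further horoball connections that must be absorbed), and the dimension of the face is then governed by $\dim \mathcal{A}(\Pi, P)$, which I will bound using Proposition~\ref{prop:dimofmac} and the suspension formula~\eqref{eq:suspensionsss}.

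First I would enumerate, using the classification of surfaces and the count of simple horoball connections in Section~\ref{sec:counting}, the possible topological types of the filled subsurface $\Pi_\beta$ for a non-simple $\beta$: it is a subsurface with geodesic boundary, positive area, and all of its spikes decorated (being a convex hull of horoball connections with decorated endpoints). Since $\Pi$ itself lies in one of the four families, $\Pi_\beta$ must also be of one of these types — a decorated polygon, punctured polygon, crown, or spiked Möbius strip — and it is \emph{not} reduced to an arc (that is the simple case, excluded). Then I would observe that, because $\beta$ has self-intersections, $\Pi_\beta$ carries at least two of the convex hulls of lifts of $\beta$, so its ``internal'' structure is nontrivial; in particular $N_{\Pi_\beta} \geq 2$ strictly unless $\Pi_\beta$ is very small.

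The heart of the argument is the dimension count. Write $P$ for the spread subset associated to the face $H_\beta^+ \cap \padm$, with components $\Pi_1, \dots, \Pi_\kappa$ in the complement. The face has dimension $\sum_\iota (1 + \dim \mathcal{A}(\Pi_\iota, \vec\beta_\iota)) - 1$, and by Proposition~\ref{prop:dimofmac} each summand is $N_{\Pi_\iota} - h_\iota$ where $h_\iota = |\vec\beta_\iota|$. One computes that for the face to be a facet — dimension $N_\Pi - 2$ — the "defect'' $N_\Pi - 2 - \dim(H_\beta^+ \cap \padm)$ must vanish; but each component $P_i$ of $P$ with its own internal moduli contributes positively to this defect (a rigid arc or closed curve contributes $1$, a subsurface with positive area contributes $N_{P_i} \geq 1$, and the compatibility constraints are already accounted for). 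Since $\beta$ non-simple forces $P$ to strictly contain a positive-area filled piece $\Pi_\beta$, the defect is at least the "lost'' dimensions inside $\Pi_\beta$. A careful bookkeeping shows this defect is $\geq 2$, hence never a facet — \emph{except} in the one borderline case where $\Pi_\beta$ is a punctured monogon $\puncp 11$ wound $k \geq 2$ times around the cusp: here $\puncp 11$ is \emph{rigid} ($N_{\puncp 11} = 0$), so although $\beta$ is non-simple, its filled subsurface absorbs no moduli beyond what the boundary horoball connection already does, and $H_\beta^+$ coincides with the facet already produced by that boundary-parallel simple connection $\beta_0 = \puncp 11$ (cf.\ the second bullet of the Remark after Lemma~\ref{lem: simple}). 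I would make this explicit by exhibiting, in the universal cover, that an arc is disjoint from $\beta$ if and only if it is disjoint from $\beta_0$.

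The main obstacle I anticipate is the case analysis for the non-orientable and punctured cases: in $\pmob nr$ and $\puncp nr$ there are infinitely many non-simple horoball connections with varied winding behaviour around the core curve or puncture, and one must verify that \emph{all} of them (except the one exceptional family) produce a filled subsurface whose complementary decomposition loses at least two dimensions. The cleanest route is probably to argue structurally — any non-simple $\beta$ either (i) has a filled subsurface of positive area that is not rigid, immediately giving defect $\geq 1$ from that piece plus $\geq 1$ more from a second absorbed horoball connection on its decorated boundary; or (ii) has $\Pi_\beta$ rigid, which by inspection of $N_\Pi = 0$ surfaces with decorated spikes forces $\Pi_\beta \cong \puncp 11$, the stated exception. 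Pinning down that (ii) leaves \emph{only} $\puncp 11$ — and in particular that no rigid fully-decorated subsurface of a Möbius strip can arise this way — is the delicate point, and I would handle it by noting that a non-simple $\beta$ crossing the core of $\pmob nr$ fills a subsurface containing that core, which is never rigid.
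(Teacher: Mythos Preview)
Your strategy of computing the face dimension directly from the filled subsurface $\Pi_\beta$ is sound, but several steps are incorrect as written. Your description of the spread subset $P$ is garbled: ``the filled subsurface of all simple horoball connections and closed curves disjoint from $\beta$'' would lie in the \emph{complement} of $\Pi_\beta$, not contain it. In fact the correct spread subset is simply $P = \Pi_\beta$ itself (which is spread whenever it is a proper subsurface), since an arc is disjoint from $\beta$ if and only if it is disjoint from $\Pi_\beta$; your claim that $P \supsetneq \Pi_\beta$ by ``absorbing'' further boundary horoball connections is wrong. Moreover, $N_{\puncp 11} = 1$, not $0$: the punctured monogon is \emph{not} rigid (the horoball depth is a parameter). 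Consequently your dichotomy ``(i) non-rigid, defect $\geq 2$'' versus ``(ii) rigid, forcing $\Pi_\beta \cong \puncp 11$'' collapses: for instance when $\Pi_\beta = \pmob 11$ one has $N_{\Pi_\beta} = 2$ and the defect is exactly $1$, not $\geq 2$. The computation that does go through is $\text{defect} = N_{\Pi_\beta} - 1$ (from $\dim V_P = N_\Pi - N_{\Pi_\beta}$), and inspection of the $N$-values for fully decorated subsurfaces in the table of Section~\ref{sec:notation} shows this is $\geq 1$ except precisely when $\Pi_\beta = \puncp 11$.

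The paper avoids this bookkeeping entirely. It picks a single simple boundary horoball connection $\beta_0 \subset \partial \Pi_\beta$, observes $H_\beta^+ \cap \padm \subseteq H_{\beta_0}^+ \cap \padm$ (any arc missing $\beta$ also misses $\beta_0$), and reduces to the already-proven Lemma~\ref{lem: simple} for the simple connection $\beta_0$. The inclusion is an equality only when $\Pi_\beta \in \{\puncp 11, \pmob 11\}$; in the second case $\beta_0$ is a maximal horoball connection of a M\"obius strip, hence by Lemma~\ref{lem: simple} not a facet either. This reduction is considerably cleaner than carrying out the full dimension count you propose.
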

\begin{proof}
	We need to prove that for every non-simple horoball connection $\be$, we have 
	\[ \dim \hyp \cap \padm = \dim \hyp,  \] if and only if $\beta$ is a horoball connection of $\puncp nr$ with equal endpoints that turns around the puncture.

    Firstly, we remark that non-simple horoball connections exist only for $\Pi\in \{\puncp nr, \pholed nr, \pmob nr\}$. These are the horoball connections that turn $k$ times around the puncture or core curve. 
	Let $S$ be the subsurface filled by the non-simple horoball connection $\be$.
If $S$ is the entire surface $\Pi$ then $\hyp \cap \adm^+ =\varnothing$, in which case the lemma holds trivially.
So we may assume that $S$ is strictly contained in $\Pi$. 
Then $S$ is one of the four types of surfaces. See Fig.~\ref{fig: fillingsurf}.
Let $\be_0$ be a boundary horoball connection of $S$.
Then $H_\beta^+ \cap \padm \subset H_{\be_0} \cap \padm$, because if the support of a strip deformation is disjoint from $\beta$ it is disjoint from $\beta_0$.
Therefore \[ \dim \hyp \cap \padm \leq \dim H_{\be_0}^+ \cap \padm .\] The equality holds only when any permitted arc disjoint with $\be_0$ is disjoint with $\be$. In Fig.~\ref{fig: fillingsurf}, this is illustrated in the top left panel and in the bottom panel.  in either of the two cases:
	\begin{itemize}
\item When $S=\puncp 11$ and $\be$ is a horoball connection that turns $k\geq1 $ times around the puncture (with equal endpoints).
\item When $S=\pmob 11$ and $\be$ is a horoball connection that turns $k\geq1$ times around the core curve (with equal endpoints).
	\end{itemize}
	In the second case, from the proof of Lemma \ref{lem: simple} we know that $\dim H_{\be_0}^+ \cap \padm < \dim H_{\be_0}^+$, so neither $\hyp$ nor $H_{\be_0}^+$ forms a facet of $\padm$. 
	
	When $ \dim \hyp \cap \padm < \dim H_{\be_0}^+ \cap \padm,$ from Lemma \ref{lem: simple}, we get that $ \dim \hyp \cap \padm < N-2$, where $N$ is the dimension of $\mathrm{T}_m \mathfrak{D}(\Pi)$.
So $\hyp$ does not form a facet of $\adm$.
\end{proof}

\section{Fully decorated surfaces} \label{sec:fully}
In the fully decorated cases, the admissible cone is a properly convex projective polyhedron, i.e.\ a convex polyhedron that is contained and bounded in some affine chart. 
We show that the vertices of these polytopes correspond to spike-to-edge arcs. 
We also show that for $n$ large, all the vertices are non-simple, i.e.\ have more incident facets than the ambient dimension.
\subsection{Fully decorated ideal polygons}
\begin{lemma}\label{lem: vip}
    Let $\Pi=\dep n$, for $(n\geq 3)$ and $m\in \tei \Pi$.
Then all the vertices of the projectivised admissible cone $\padm$ are given by $[f_\al(m)]$, where $\al$ is a spike-to-edge arc.  
\end{lemma}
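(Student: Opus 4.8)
The plan is to use the combinatorial description from Theorem~\ref{thm:mainB} to identify the faces of $\padm$ with marked arc complexes $\mathcal{A}(\Pi,P)$ for spread subsets $P$, and then to show that the $0$-dimensional faces (vertices) are exactly those for which the unique arc not meeting $P$ is a spike-to-edge arc. First I would recall that for $\Pi=\dep n$ a spread subset $P$ has all components which are either horoball connections or fully-decorated subpolygons, and that every component of $\Pi\smallsetminus P$ meets $\partial\Pi$. By the second bullet of Theorem~\ref{thm:mainB}, the face $f(\mathcal{A}(\Pi,P))$ is a vertex precisely when $\mathcal{A}(\Pi,P)$ is a single point, i.e.\ there is exactly one permitted arc in $\Pi$ disjoint from $P$ (up to isotopy), and no two disjoint such arcs.

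Next I would translate this into the ``red/blue'' diagonal model of the proof of Proposition~\ref{prop:dimofmac}, Case 1: a face corresponds to decomposing $\Pi$ along a spread subset, and via the suspension formula~\eqref{eq:suspensionsss} the complex $\mathcal{A}(\Pi,P)$ is the join of the complexes $\mathcal{A}(\Pi_\iota,\vec\beta_\iota)$ over the complementary pieces. A join is a single point iff exactly one factor is a single point ($\mathbb{S}^0$ being two points, $\mathbb{B}^0$ a point) and all others are empty — but since spread subsets never produce the empty case (Remark~\ref{rem:spheres2}), a vertex forces there to be a \emph{single} complementary component $\Pi_\iota$, with $\mathcal{A}(\Pi_\iota,\vec\beta_\iota)$ a point, i.e.\ $\dim=0$, which by Proposition~\ref{prop:dimofmac}(1) means $N_{\Pi_\iota}-1-h_\iota=0$ and the ball case, so $\Pi_\iota$ is a polygon with $N_{\Pi_\iota}=h_\iota+1$ and at least one red-red diagonal. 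The unique non-blue-blue diagonal is then the one arc: I would check by the $n=h+r$ bookkeeping that this configuration forces $\Pi_\iota$ to be a polygon admitting exactly one spike-to-edge permitted arc, and conversely that every spike-to-edge arc $\alpha$ does arise this way, by taking $P=\Pi\smallsetminus(\text{open neighbourhood of }\alpha)$, which is readily checked to be spread with $\mathcal{A}(\Pi,P)=\{\alpha\}$.

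Concretely, I expect the cleanest argument goes in two directions. For the ``every vertex is a spike-to-edge arc'' direction: a vertex $v=f(x)$ of $\padm$ has $|x|$ a single arc $\alpha$ by homeomorphicity of $f$ (Proposition~\ref{prop:striphomeo}); if $\alpha$ were a spike-to-spike arc it would cut $\Pi$ into two polygons each still carrying enough decorated spikes to admit further disjoint permitted arcs, so $\mathcal{A}(\Pi)$ would contain an edge through $x$, contradicting dimension $0$ of the face — here I must be careful to exclude the degenerate small polygons, using $n\geq 3$ and the permittedness conditions. Hence $\alpha$ is spike-to-edge. For the converse: given a spike-to-edge arc $\alpha$, cutting along it yields a single polygon, and one checks directly (again by the arc-counting) that no permitted arc is disjoint from $\alpha$, so $\{\alpha\}$ is a maximal non-filling configuration, i.e.\ $f(\alpha)$ spans a $0$-dimensional face.

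The main obstacle I anticipate is the careful case analysis ensuring that a spike-to-\emph{spike} arc always leaves room for a second disjoint permitted arc (so that it never gives a vertex), and dually that a spike-to-edge arc never does: this hinges on the ``permitted'' conditions (no arc cutting off a single undecorated spike, infinite ends must land on decorated spikes) together with the constraint $n\ge 3$, and it is exactly where the small-polygon edge cases $\dep 3, \dep 4$ need to be inspected by hand. Everything else is a formal consequence of Theorem~\ref{thm:mainB} and Proposition~\ref{prop:dimofmac} specialized to $\pdep n n$.
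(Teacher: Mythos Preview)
Your high-level plan to invoke Theorem~\ref{thm:mainB} is sound, but the concrete execution contains genuine errors that prevent the argument from going through.

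First, a terminological slip: there are no ``spike-to-spike'' arcs in the arc complex. In $\dep n$ the permitted arcs are either \emph{edge-to-edge} (a compact segment joining two sides) or \emph{spike-to-edge} (a ray from a side into a decorated spike). A bi-infinite geodesic joining two spikes is a horoball connection, not an arc. Presumably you meant ``edge-to-edge'' where you wrote ``spike-to-spike''.

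The real gap is in your converse direction. You claim that for a spike-to-edge arc $\alpha$, ``cutting along it yields a single polygon, and one checks directly \ldots\ that no permitted arc is disjoint from $\alpha$''. Both assertions are false. Cutting a polygon along a spike-to-edge arc yields \emph{two} subpolygons (the arc is separating), and in general each piece carries many permitted arcs disjoint from $\alpha$: already in $\dep 5$ any spike-to-edge arc admits several disjoint permitted arcs. Being a vertex of the polytope $\padm$ is \emph{not} the same as being an isolated $0$-simplex of $\mathcal{A}(\Pi)$; rather, by Theorem~\ref{thm:mainB} it means there exists a spread subset $P$ for which $\alpha$ is the \emph{unique} permitted arc disjoint from $P$. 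Equivalently (and this is what the paper actually verifies): $\alpha$ is the unique arc disjoint from every horoball connection $\beta$ that is itself disjoint from $\alpha$. This is a statement about the family $\mathcal{H}_\alpha$ of horoball connections avoiding $\alpha$, not about arcs disjoint from~$\alpha$.

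Your forward direction has the same defect. To rule out an edge-to-edge arc $\alpha$, you must exhibit another arc $\alpha'$ avoiding every $\beta\in\mathcal{H}_\alpha$, not merely some arc disjoint from $\alpha$. The paper does exactly this, by sliding one finite endpoint of $\alpha$ into an adjacent decorated spike to obtain a spike-to-edge arc $\alpha_0$ with $\mathcal{H}_\alpha\subset\mathcal{H}_{\alpha_0}$; then for spike-to-edge $\alpha$ it runs a short case analysis showing that any candidate $\alpha'$ is separated from $\alpha$ by some horoball connection in $\mathcal{H}_\alpha$.

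The route through Proposition~\ref{prop:dimofmac} that you sketch in your second paragraph could in principle be completed, but you would have to actually solve $N_{\Pi_\iota}-1-h_\iota=0$ in the ball case~1.(c), determine the shape of the unique complementary region $\Pi_\iota$, and then identify which single arc survives in it; none of this is carried out, and it is not as immediate as ``$n=h+r$ bookkeeping'' suggests. The paper's direct argument with horoball connections is shorter and avoids invoking the spread-subset formalism altogether.
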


\begin{figure}[ht!]
    \centering
    \includegraphics[width=0.7\linewidth]{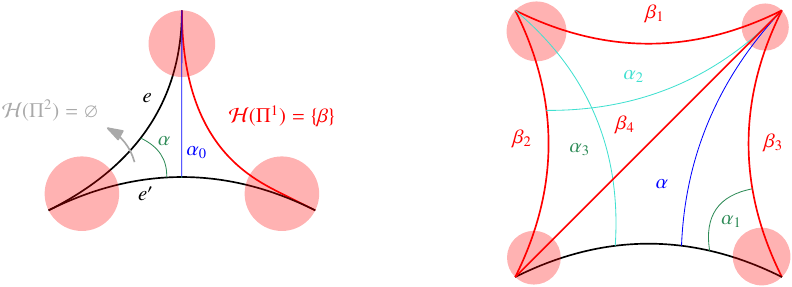}
    \caption{Left-panel: Any horoball connection disjoint from an edge-to-edge arc is disjoint from a spike-to-edge arc.
Right panel: The spike-to-edge arc $\al$ is the only arc disjoint from the horoball connections $\be_1,\ldots, \beta_4$. }
    \label{fig: e2e}
\end{figure}
\begin{proof}
Any vertex of $\padm$ is given by $[f_\al(m)]$ where $\al$ is either spike-to-edge or edge-to-edge.
Firstly, we prove that the second case is not possible.
Let $\al \in \partial\ac {\dep n}$ be an edge-to-edge arc, 
separating the surface into $\Pi^1, \Pi^2$.
Let $\mathcal{H}_{f_{\al}}$ be the maximal family of hyperplanes containing $[f_\al(m)]$ and defined by horoball connections:
\[ \mathcal{H}_{f_{\al}}=\{H_\be\mid \be \in \mathcal{H}(\Pi^1)\sqcup \mathcal{H}(\Pi^2)
\}, \]
where $\mathcal{H}(\Pi^i)$ is the set of all horoball connections contained in the surface $\Pi^i$. 
Without loss of generality, suppose that $\mathcal H(\Pi^1)\neq \emptyset$.
See the left panel in Fig.~\ref{fig: e2e}.
Suppose that $\al$ joins the edges $e, e'$ of $\Pi$. 
Let $\al_0$ be a (permitted) spike-to-edge arc obtained from $\alpha$ by sliding the endpoint that lies in $e$ into the adjacent spike that lies in $\Pi^1$. 
Then any horoball connection that is disjoint from $\al$ is also disjoint from~$\alpha_0$.
Consequently, $\bigcap_{H\in \mathcal{H}_{f_\alpha}} H$ contains at least two distinct points $[f_\al(m)]$, $[f_{\al'}(m)]$; therefore it is not a vertex of $\padm$.

Next we prove that any spike-to-edge arc forms a vertex.
In the right panel of Fig.~\ref{fig: e2e}, the set of all horoball connections from which the spike-to-edge arc $\al$ is disjoint is given by $\{ \be_1,\ldots, \beta_4\}$.
None of the arcs $\al_1,\al_2, \al_3$ are disjoint from all the $\be_i$. 
If possible, let $\al'$ be another arc such that both $[f_\al(m)]$ and $[f_{\al'}(m)] $ belong to $\bigcap_{H\in \mathcal H_{f_\al}} H$.
Letting $F_\alpha$ denote the face of $\padm$ containing $[f_\alpha(m)]$, we may assume that $(\alpha, \alpha')$ is an edge in the simplicial decomposition of $F_\alpha$ provided by Theorem~\ref{thm:mainB}.
Then $\al'$ is disjoint from $\al$ as well as from every $\be$ disjoint from $\al$.
Suppose that $\al'$ is an edge-to-edge arc.
Then at least one of its finite endpoints lies on a boundary horoball connection $\be$ that is different from the one containing the finite endpoint of $\al$.
This $\beta$ is disjoint from $\al$ but not from $\al'$, contradiction: so $\al'$ cannot be of edge-to-edge type.
Next suppose that $\al'$ is of spike-to-edge type.
If the finite endpoints of $\al$ and $\al'$ lie on different boundary components, we can conclude as in the previous case.
Hence, suppose that the finite ends of $\al,\al'$ lie on the same boundary horoball connection $\be$.
So their infinite ends converge to distinct decorated spikes.
In Fig.~\ref{fig: e2e}, this is the case for $\al_3$.
Since $\al_3$ and $\al$ are disjoint isotopy classes, the polygon must have at least four decorated spikes.
Let $v_0$ be a decorated spike lying on the opposite side of $\al'$ as $\al$.
Then the horoball connection ($\beta_4$ in the picture) joining the decorated spike of $\al$ with $v$ intersects $\al'$, while being disjoint from $\al$.
This concludes the proof.
\end{proof}

\begin{lemma}\label{lem: sip}
Let $\Pi=\dep n$ and $m\in \tei \Pi$.
Then for $n\geq 6$, all the vertices of $\padm$ are non-simple.
\end{lemma}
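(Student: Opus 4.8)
The plan is to translate ``non-simple vertex'' into a counting problem and then settle it with a one-line convexity estimate. Recall from Lemma~\ref{lem: vip} that every vertex of $\padm$ is $[f_\al(m)]$ for a spike-to-edge arc $\al$; in the fully decorated case $\padm$ is a bounded, full-dimensional convex polytope, of dimension $\dim\padm = N_\Pi-1 = 2n-4$. A vertex is non-simple precisely when it lies on strictly more than $\dim\padm$ facets, so it suffices to produce, for every spike-to-edge arc $\al$, more than $2n-4$ facets of $\padm$ through $[f_\al(m)]$. Each simple horoball connection $\be$ that is disjoint from $\al$ yields one such facet: $H_\be^+\cap\padm$ is a facet by Lemma~\ref{lem: simple} (there is no M\"obius-type exception for $\dep n$), it contains $[f_\al(m)]$ because a strip deformation supported on $\al$ fixes the length of any horoball connection it does not cross, and distinct $\be$'s cut out distinct facets by the lemma following Lemma~\ref{lem: simple}. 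Hence the number of facets through $[f_\al(m)]$ is at least the number $F(\al)$ of simple horoball connections disjoint from $\al$, and we are reduced to showing $F(\al) > 2n-4$ for all spike-to-edge $\al$ when $n\geq 6$.

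Next I would compute $F(\al)$. Label the spikes $1,\dots,n$ cyclically and write $\al$ as the arc running from the spike $a$ to the edge joining two consecutive spikes $b,b+1$, where $a\notin\{b,b+1\}$. The arc $\al$ splits $\dep n$ into two sub-polygons, and its ideal endpoint $a$ belongs to \emph{both}. Let $P$ and $Q$ be the resulting two blocks of spikes, each containing $a$; then $P\cap Q=\{a\}$ and $P\cup Q=\{1,\dots,n\}$, so $|P|+|Q| = n+1$, and $|P|,|Q|\geq 2$ since $a\notin\{b,b+1\}$. A simple horoball connection $\{i,j\}$ can be realised disjointly from $\al$ exactly when $i$ and $j$ lie in a common block: every connection with an endpoint at the shared spike $a$ qualifies (it can be drawn inside one sub-polygon), whereas the ``landing'' edge $\{b,b+1\}$ does not --- its endpoints lie in different blocks and a strip along $\al$ lengthens it. Counting connections inside $P$, inside $Q$, and inside $P\cap Q$ (of which there are $\binom{1}{2}=0$), we get
\[ F(\al) = \binom{|P|}{2} + \binom{|Q|}{2}. \]

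It remains to bound $\binom{p}{2}+\binom{q}{2}$ from below, subject to $p+q=n+1$ and $p,q\geq 2$. Since $x\mapsto\binom{x}{2}=\frac{x(x-1)}{2}$ is convex,
\[ \binom{p}{2}+\binom{q}{2} \;\geq\; 2\binom{(n+1)/2}{2} \;=\; \frac{(n+1)(n-1)}{4} \;=\; \frac{n^2-1}{4}. \]
Finally $\frac{n^2-1}{4} > 2n-4$ is equivalent to $n^2-8n+15>0$, i.e.\ $(n-3)(n-5)>0$, which holds for every integer $n\geq 6$. Therefore $F(\al) > 2n-4 = \dim\padm$ for every spike-to-edge arc $\al$ when $n\geq 6$, and all vertices of $\padm$ are non-simple. (The threshold is sharp: for $n\in\{3,4,5\}$, taking $\al$ with the two blocks as balanced as possible gives $F(\al)=2n-4$, i.e.\ a simple vertex.)

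The only real work is the middle step: getting $F(\al)$ exactly right. The two points to be careful about are that the ideal endpoint $a$ of $\al$ lies in both blocks --- so that all $n-1$ horoball connections incident to $a$ survive as facets through the vertex --- and that the edge on which $\al$ lands is \emph{crossed} by $\al$ and therefore must be excluded. Once $F(\al)=\binom{|P|}{2}+\binom{|Q|}{2}$ is secured, the conclusion is immediate from convexity; no hyperbolic geometry beyond the already-quoted behaviour of strip deformations is needed.
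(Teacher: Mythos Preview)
Your proof is correct and follows essentially the same approach as the paper: both identify vertices with spike-to-edge arcs (Lemma~\ref{lem: vip}), identify facets with simple horoball connections (Lemma~\ref{lem: simple}), and reduce the problem to counting horoball connections disjoint from a given spike-to-edge arc, obtaining the same formula $\binom{p}{2}+\binom{q}{2}$ with $p+q=n+1$. The only difference is in the last step: the paper solves $d(k)\le 2n-4$ directly and lists the exceptional pairs $(k,n)$, whereas you bound the count below by convexity and factor the resulting quadratic $(n-3)(n-5)>0$; this is a cosmetic variation, not a different route.
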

\begin{proof}
    Let $v$ be a vertex of $\padm$. It is non-simple if and only if it is contained in more than $2n-4$ facets of $\padm$. From Lemma~\ref{lem: vip}, we know that $v=[f_{\al}(m)]$, where $\al$ is a spike-to-edge arc. From Lemma~\ref{lem: simple}, we know that every facet is of the form $H_\be\cap \padm$, where $\be$ is a simple horoball connection. We also know that the point $v$ is contained in a facet if and only if there is a $\al$ is disjoint from $\be$. So we need to show that for $n\geq 4$, there are at least $2n-4$ horoball connections that are disjoint from $\al$. 
    
    Now, the arc $\al$ divides the surface into two fully decorated polygons: $\Pi^1:=\dep{k}$ and $\Pi^2:=\dep{n-k+1}$, where $2\leq k\leq n-1$. Then, the total number of horoball connections disjoint from $\al$ is given by the \emph{degree}
    \begin{align*}
    d(k):=|H(\Pi^1)|+|H(\Pi^2)|&=\frac{1}{2}[k(k-1)+(n-k+1)(n-k)]\\
    &=\frac{1}{2}[2k^2+n^2-2k-2nk+n].
    \end{align*}
    Then $d(k)\leq 2n-4$ if and only if $(k,n)=(2,3), (2,4), (3,4), (3,5)$. 
So we conclude that for $n\geq 6$, the total number of horoball connections disjoint from $\al$ is more than $2n-4$, thus proving the lemma.
\end{proof}

\begin{figure}
    \centering
    \includegraphics[width=0.7\linewidth]{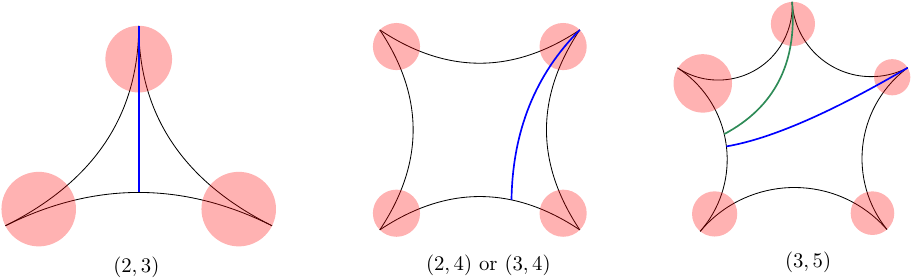}
    \caption{The cases $(k,n)$ for fully decorated ideal polygons where $\padm$ has simple vertices: the arcs corresponding to simple vertices are coloured in blue. In the rightmost figure, the arc corresponding to a non-simple vertex is coloured in green.}
    \label{fig: sip}
\end{figure}
\paragraph{Exceptions.}
The conclusion of Lemma~\ref{lem: sip} fails for $n<6$: in fact for $n=3,4$ \emph{all} vertices are simple, while for $n=5$ the polytope $\padm$ contains a mix of simple and nonsimple vertices.
This is summarized in the table below.
\[
\begin{array}{|c|c|c|c|}
\hline
n & 2n-4 &\#\text{simple vertices} & \#\text{non-simple vertices (degree)}\\
\hline
3 & 2 & 3 & 0 \\
4 & 4 & 4 & 0 \\
5 & 6 & 5 & 10~(7)\\
\hline
\end{array}
\]

\subsection{Once-punctured polygons}
\begin{lemma}\label{lem: vpp}
Let $\Pi= \puncd n$ $(n\geq 2)$ and $m\in \tei \Pi$.
Then all the vertices of the projectivised admissible cone $\padm$ are given by $[f_\al(m)]$, where $\al$ is a spike-to-edge arc.  
\end{lemma}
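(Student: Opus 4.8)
The structure should closely parallel the argument for fully decorated ideal polygons in Lemma~\ref{lem: vip}, with the once-punctured polygon $\Pi=\puncd n$ replacing the ideal polygon. By Proposition~\ref{prop:striphomeo} and Theorem~\ref{thm:mainB}, every vertex of $\padm$ is $[f_\alpha(m)]$ for some arc $\alpha$ that appears as a $0$-simplex in the boundary of $\mathcal{A}(\Pi)$. The arcs of a once-punctured polygon come in three types: spike-to-edge, edge-to-edge, and edge-to-edge arcs that go around the puncture (as well as puncture-parallel loops, which are not permitted arcs). The goal is to eliminate every type except spike-to-edge.

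\textbf{Step 1: no edge-to-edge arc gives a vertex.} Let $\alpha$ be a permitted edge-to-edge arc, joining edges $e,e'$. Cutting along $\alpha$ produces either two pieces (a polygon $\Pi^1\in\pdep{}{}$ and a punctured polygon $\Pi^2\in\puncp{}{}$) or, if $\alpha$ winds around the puncture, a single punctured polygon and a polygon — but in all cases at least one complementary piece is a genuine subsurface carrying the puncture, and (since $n\ge 2$ and the endpoints of $\alpha$ lie on boundary horoball connections) one can slide an endpoint of $\alpha$ lying in $e$ into an adjacent decorated spike on the same side, producing a spike-to-edge arc $\alpha_0$ disjoint from $\alpha$. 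Then every horoball connection disjoint from $\alpha$ is also disjoint from $\alpha_0$ (sliding an endpoint into a spike can only decrease intersections), so the intersection $\bigcap_{H\in\mathcal H_{f_\alpha}}H$ of all length-hyperplanes through $[f_\alpha(m)]$ contains the distinct point $[f_{\alpha_0}(m)]$, hence is not a vertex. The one point to check carefully is that such a slide is always available: this uses that $\Pi$ has a decorated spike adjacent to $e$ on the appropriate side, which is automatic since the arc is permitted (it does not cut off a single undecorated spike) unless the adjacent spike on one side is undecorated — in that case slide toward the other side, and if both adjacent spikes are undecorated then $\alpha$ is boundary-parallel to an undecorated bigon and $[f_\alpha(m)]$ lies in the interior sphere $\mathbb S^{n-r-1}$ of free undecorated motions, hence again not a vertex. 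This case analysis is the first mild obstacle.

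\textbf{Step 2: every spike-to-edge arc gives a vertex.} Let $\alpha$ be a spike-to-edge arc, with finite endpoint on a boundary horoball connection $\beta$ and infinite end converging to a decorated spike $v$. Let $F_\alpha$ be the face of $\padm$ containing $[f_\alpha(m)]$, and suppose for contradiction $[f_\alpha(m)]$ is not a vertex; by Theorem~\ref{thm:mainB} we may pick an edge $(\alpha,\alpha')$ of the simplicial decomposition of $F_\alpha$, so $\alpha'$ is disjoint from $\alpha$ and from every horoball connection that is disjoint from $\alpha$. One then rules out each possible type for $\alpha'$ exactly as in Lemma~\ref{lem: vip}: if $\alpha'$ is edge-to-edge, one of its finite endpoints lies on a boundary horoball connection different from $\beta$ (here the once-punctured case introduces the subtlety that an edge-to-edge arc may wind around the puncture, but it still meets a boundary horoball connection distinct from $\beta$, or else it separates off a configuration still hit by such a $\beta$), which is disjoint from $\alpha$ but not from $\alpha'$ — contradiction; if $\alpha'$ is spike-to-edge with finite endpoint on a boundary component distinct from $\beta$, same contradiction; if $\alpha'$ is spike-to-edge with finite endpoint on $\beta$, its infinite end converges to a decorated spike $v'\ne v$, so (disjointness of $\alpha,\alpha'$ forcing enough spikes) there is a decorated spike $v_0$ on the far side of $\alpha'$ from $\alpha$, and the horoball connection joining $v$ to $v_0$ crosses $\alpha'$ but not $\alpha$ — contradiction.

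\textbf{Main obstacle.} The genuinely new feature compared with Lemma~\ref{lem: vip} is the presence of the puncture and of non-simple (puncture-winding) horoball connections and arcs, so the case analysis in both steps must account for arcs and horoball connections that wrap around the cusp. The cleanest way to handle this uniformly is probably to work in the $(n+2)$-gon covering picture of Section~\ref{sec:notation} (lift everything to the ideal polygon obtained before the parabolic identification, as in Figure~\ref{fig: fillingsurf}), where winding arcs unwrap into ordinary diagonals; the combinatorial statement then reduces to a statement about diagonals of a polygon with a distinguished pair of identified edges, essentially the same diagonal-complex bookkeeping used in the proof of Proposition~\ref{prop:dimofmac}, Case~2. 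I expect the argument to go through with only notational overhead once this lift is set up; the subtle point to get right is that the slide in Step~1 is always possible \emph{as a permitted arc}, i.e.\ never produces an arc cutting off a single undecorated spike, which is where the hypotheses $n\ge 2$ and the "permitted" condition are used.
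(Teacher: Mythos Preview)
Your overall plan matches the paper's: rule out edge-to-edge arcs by a slide argument (Step~1), then show spike-to-edge arcs give vertices by a case analysis on a hypothetical neighbor $\alpha'$ (Step~2). But you transplant the polygon argument of Lemma~\ref{lem: vip} too literally, and this creates real gaps in Step~2.

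First, two minor confusions. In Step~1 you worry about sliding into an \emph{undecorated} spike; but $\Pi=\puncd n$ is fully decorated, so this never happens. The actual obstruction to the slide is different: when the polygon side $\Pi^1$ of $\alpha$ has exactly one spike $w$, sliding the $e$-endpoint into $w$ produces a trivial (boundary-parallel) arc. The paper's fix is simply to slide to the $\Pi^2$ side in that case; your text never isolates or resolves this.

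The substantive gap is in Step~2. You assert that if $\alpha'$ is spike-to-edge with finite endpoint on the same edge as $\alpha$, then its infinite end goes to some $v'\ne v$, and then you produce a decorated spike $v_0$ ``on the far side of $\alpha'$''. Both claims can fail in the punctured case:
\begin{itemize}
\item Two distinct simple spike-to-edge arcs from the \emph{same} spike $v$ to the \emph{same} edge $e$ exist (one on each side of the puncture), so $v'=v$ is possible. The paper explicitly allows this.
\item The region on the far side of $\alpha'$ from $\alpha$ may contain the puncture and \emph{no} spikes at all, so there is no $v_0$ and your horoball connection $v$--$v_0$ does not exist.
\end{itemize}
The paper's key device in both situations is the \emph{maximal horoball connection} based at a spike --- the loop from $v$ (or from some spike in the cut-off disk) around the puncture back to itself. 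This loop is disjoint from $\alpha$ but necessarily crosses any $\alpha'$ that separates the puncture from $\alpha$; it also disposes of the edge-to-edge $\alpha'$ with both endpoints on $e$, which your parenthetical leaves unresolved. Your universal-cover suggestion might be made to work, but the maximal horoball connection is the concrete idea you are missing, and without it the case analysis in Step~2 does not close.
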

\begin{figure}[ht!]
    \centering
    \includegraphics[width=0.8\linewidth]{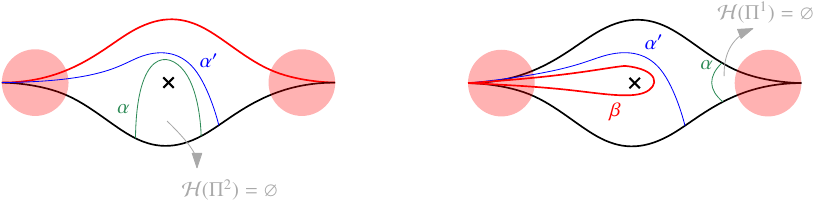}
    \caption{Left panel: The edge-to-edge arc $\al$ separates, from $\puncd 2$, a punctured disk with no horoball connections.
Right panel: The arc $\al$ separates, from $\puncd 2$, a disk with one decorated spike and hence no horoball connections.
In both the cases, the spike-to-edge arc $\al'$ is disjoint from the only horoball connection $\be$ that is disjoint from the edge-to-edge arc $\al$.}
    \label{fig: vpp3}
\end{figure}
\begin{proof}
    Like in the case of ideal polygons, we will firstly show that $[f_\al(m)]$ cannot be a vertex for $\al$ edge-to-edge.
Let $\al \in \partial\ac {\puncd n}$ be an edge-to-edge arc separating the surface into a disk $\Pi^1$ and a punctured disk $\Pi^2$.
Again, let $\mathcal{H}_{f_\al}$ be the maximal family of hyperplanes containing $[f_\al(m)]$ and defined by horoball connections.
Then it is of the form \[
 \mathcal{H}_{f_\al}=\{H_\be\mid \be \in \mathcal H(\Pi^1)\sqcup \mathcal H(\Pi^2)\}.
    \]
    Suppose that $\mathcal H(\Pi^1)\neq \varnothing$.
This is the case in the left panel of Fig.~\ref{fig: vpp3}.
Then $\Pi^1$ has at least 2 decorated spikes.
The spike-to-edge arc that joins one of these two spikes with the edge containing one of the finite endpoints of $\al$ is disjoint from all the horoball connections that are disjoint from $\al$.

\begin{figure}[ht!]
    \centering
    \includegraphics[width=5cm]{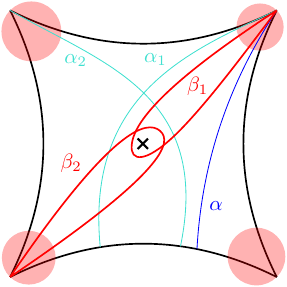}
    \caption{Lemma \ref{lem: vpp} for $\puncd 4$}
    \label{fig: vpp2}
\end{figure}

Now suppose that $\Pi^1$ contains only one decorated spike.
This is illustrated in the right panel of Fig.~\ref{fig: vpp3}.
Let $\al'$ be the spike-to-edge arc obtained from $\alpha$ by sliding one of its endpoints into the opposite puncture, away from $\Pi^1$.
Then $\al'$ is disjoint from all the horoball connections disjoint from $\al$.
Consequently, both $[f_\al(m)]$ and $[f_{\al'}(m)] $ belong to $\bigcap_{H\in \mathcal{H}_{f_\alpha}} H$, which is therefore not a vertex of $\padm$.

Next we prove that $[f_\al(m)]$ forms a vertex for any spike-to-edge arc $\al$ joining a decorated spike $v$ and a boundary edge $e$.
If possible, let $\al'$ be another arc such that the segment joining the two points $[f_\al(m)],[f_{\al'}(m)] $ lies in $\bigcap_{H\in \mathcal H_{f_\al}} H$.
Using the arguments from the proof of Lemma \ref{lem: vip}, we have that $\al'$ cannot be an edge-to-edge arc nor a spike-to-edge arc whose finite endpoint lies on a boundary edge different from $e$.
Let $\al'$ be a spike-to-edge arc joining $v'$ and $e$ where it is possible to have $v=v'$.
Suppose that $\al' \subset \Pi^1$.
Then using the arguments in Lemma \ref{lem: vip} we get that $\al'$ cannot be disjoint from all the horoballs in $H(\Pi^1)$.
Suppose that $\al' \subset \Pi^2$.
This is illustrated in Fig.~\ref{fig: vpp2}.
Let us assume that $\al'$ separates the puncture from $\al$.
This is the case for the arc $\al_2$ in Fig.~\ref{fig: vpp2}.
Then the infinite end of $\al'$ cannot converge to $v$.
So the maximal horoball connection based at $v$ intersects $\al'$ while being disjoint from $\al$.
So $\al'$ separates a disk from $\al$.
This is the case illustrated by $\al_1$ in Fig.~\ref{fig: vpp2}.
Since the surface is fully decorated, the disk has at least one decorated spike, say $v'$.
Then the maximal horoball connection based at $v'$ intersects $\al'$ while being disjoint from $\al$.
Hence $[f_{\al'}(m)] \notin\bigcap_{W\in \mathcal H_{f_\al}} W$.
So $[f_{\al}(m)]$ is a vertex of $\padm$.
\end{proof}

\begin{lemma}\label{lem:spp}
    Let $\Pi=\puncd n$ and $m\in \tei \Pi$.
Then for $n\geq 4$, the vertices of $\padm$ are non-simple.
\end{lemma}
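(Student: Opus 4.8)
The plan is to imitate the proof of Lemma~\ref{lem: sip}, only replacing the ideal‑polygon count of horoball connections by the punctured‑polygon one. By Lemma~\ref{lem: vpp}, every vertex of $\padm$ equals $[f_\al(m)]$ for some spike‑to‑edge arc $\al$; and since $\Pi=\puncd n=\puncp nn$ is fully decorated, $\padm$ is a properly convex polytope of dimension $N_\Pi-1=2n-2$ whose facets are cut out by simple horoball connections, distinct ones giving distinct facets (Lemma~\ref{lem: simple} and the remark following it; non‑simple horoball connections contribute no further facets by Lemma~\ref{lem:nonsimplefacet}). As $[f_\al(m)]\in H_\be^+$ exactly when $\be$ is disjoint from $\al$, the vertex $[f_\al(m)]$ lies on exactly as many facets as there are simple horoball connections disjoint from $\al$, and it is simple iff that number equals $2n-2$. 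So the whole task reduces to showing that strictly more than $2n-2$ simple horoball connections avoid $\al$ whenever $n\geq4$.

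Next I would compute that number. Cutting $\puncd n$ along $\al$ decomposes it into a fully decorated ideal polygon $\Pi^1=\dep j$ and a fully decorated once‑punctured polygon $\Pi^2=\puncd{n+1-j}$, for some $j\in\{2,\dots,n\}$ (the spike of $\al$ being counted on each side; $j=1$ is excluded since it would force $\al$ to be boundary‑parallel). A simple horoball connection of $\puncd n$ avoids $\al$ precisely when it lies entirely inside $\Pi^1$ or inside $\Pi^2$; so, using the tallies of Section~\ref{sec:notation} — which give $|\mathcal H(\dep j)|=\tfrac{j(j-1)}{2}$ and $|\mathcal H(\puncd{n+1-j})|=(n+1-j)^2$, boundary sides that are horoball connections being included — the number of simple horoball connections disjoint from $\al$ is the degree
\[
 d(j)\ =\ |\mathcal H(\Pi^1)|+|\mathcal H(\Pi^2)|\ =\ \tfrac{j(j-1)}{2}+(n+1-j)^2 .
\]
The one point I expect to need a little care — a short case inspection of the same flavour as in the proof of Lemma~\ref{lem: vpp} — is checking that $j$ genuinely runs over all of $\{2,\dots,n\}$ and that the extra side created by the cut does not disturb these counts, e.g.\ at the endpoints $j=2$ (where $\Pi^1$ contributes only its single boundary horoball connection) and $j=n$ (where $\Pi^2=\puncd1$ is rigid and contributes only the loop around the puncture).

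Finally I would minimise $d$ over $\{2,\dots,n\}$. As a function of a real variable, $d$ is a convex parabola whose vertex $j_0=\tfrac{4n+5}{6}$ lies in the open interval $(2,n)$ for every integer $n\geq4$; hence $d(j)\geq d(j_0)=\tfrac{8n^2+8n-1}{24}$ for all integers $j$ in the range. Since $\tfrac{8n^2+8n-1}{24}>2n-2$ is equivalent to $8n^2-40n+47>0$, whose real roots are $\tfrac{10\pm\sqrt6}{4}\approx1.9$ and $3.1$, this strict inequality holds for every integer $n\geq4$. Thus each vertex of $\padm$ lies on strictly more than $2n-2$ facets, i.e.\ is non‑simple, which proves the lemma. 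The bound $n\geq4$ is sharp: for $n=3$ the arc with $j=3$ gives $d(3)=4$, which equals $2n-2$, so that vertex is simple — the analogue of the small‑$n$ exceptions already observed for fully decorated ideal polygons.
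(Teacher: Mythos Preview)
Your proof is correct and follows essentially the same approach as the paper's. You parametrize by $j$, the number of spikes on the polygonal side of $\al$, whereas the paper parametrizes by $k=n+1-j$, the number of spikes on the punctured side; your degree formula $d(j)=\tfrac{j(j-1)}{2}+(n+1-j)^2$ is exactly the paper's $d(k)=k^2+\tfrac{(n-k+1)(n-k)}{2}$ under this substitution. The paper's printed argument simply asserts that $d(k)\le 2n-2$ holds only for $(k,n)\in\{(1,2),(1,3)\}$, while you bound $d$ from below by its continuous minimum $d(j_0)=\tfrac{8n^2+8n-1}{24}$ and solve $8n^2-40n+47>0$ --- this is precisely the alternate computation the authors left commented out in their source, so the two arguments are equivalent.
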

\begin{proof}
    Like in Lemma \ref{lem: sip}, we need to show that for $n\geq 4$, there are more than $2n-2$ horoball connections that are disjoint from every spike-to-edge arc $\al$. Suppose that $\al$ decomposes the surface into two subsurfaces $\Pi^1:=\puncd k$ and $\Pi^2:=\dep {n-k+1}$.
    Let $d(k)$ be the degree of $\al$. Then we have that 
     \begin{align*}
    d(k):=|H(\Pi^1)|+|H(\Pi^2)|&=\frac{1}{2}[(n-k+1)(n-k)]+k^2\\
    &=\frac{1}{2}[3k^2+n^2-k-2nk+n].
    \end{align*}
Then $d(k)\leq 2n-2$ if and only if $(k,n)=(1,2), (1,3)$. Thus for $n\geq 4$, all the vertices of $\padm$ are non-simple.
    \end{proof}
\paragraph{Exceptions.}   
The conclusion of Lemma~\ref{lem:spp} fails for $n<4$: in fact for $n=1,2$ \emph{all} vertices are simple, while for $n=3$ the polytope $\padm$ contains a mix of simple and nonsimple vertices.
This is summarized in the table below.
\[
\begin{array}{|c|c|c|c|}
\hline
n & 2n-2 &\#\text{simple vertices} & \#\text{non-simple vertices (degree)}\\
\hline
1 & 0 & 1 & 0  \\
2 & 2 & 4 & 0  \\
3 & 4 & 6 & 6~(5) \\
\hline
\end{array}
\]
\subsection{Crowns}
\begin{lemma}\label{lem: vcr}
    Let $\Pi=\dholed n$ and $m\in \tei \Pi$.
When $n=1$, the vertices of $\padm$ are given by one spike-to-edge arc and one edge-to-edge arc.
For $n\geq 2$, all the vertices of $\padm$ are given by $[f_\al(m)]$, where $\al$ is a spike-to-edge arc.  
\end{lemma}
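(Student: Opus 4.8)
The plan is to mimic the structure of the proofs of Lemmas~\ref{lem: vip} and~\ref{lem: vpp}: every vertex of $\padm$ is $[f_\alpha(m)]$ for $\alpha$ a boundary point of the arc complex, which here (for a fully decorated crown $\dholed n$) is either a spike-to-edge arc, an edge-to-edge arc, or an arc with an endpoint on the core loop $c$ (a ``spike-to-core'' or ``edge-to-core'' arc). I would first dispose of the $n=1$ case by direct inspection: for $\dholed 1$ we have $N_\Pi=1$, so $\padm$ is a segment with exactly two vertices, and one checks that they are realized by the unique spike-to-edge arc and the unique edge-to-edge arc (the latter being the one that, together with the spike-to-edge arc, fills the crown); this is the genuinely exceptional low-complexity case and is easy to list out by hand.

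For $n\geq 2$, I would rule out each disallowed type of arc by exhibiting, for a candidate vertex $[f_\alpha(m)]$, a second arc $\alpha'$ disjoint from $\alpha$ and from every horoball connection and from $c$ whenever these are disjoint from $\alpha$, so that both $[f_\alpha(m)]$ and $[f_{\alpha'}(m)]$ lie on the same minimal face $\bigcap_{H\in\mathcal H_{f_\alpha}}H$, forcing that intersection to have positive dimension. Concretely: (i) if $\alpha$ is edge-to-edge, it separates $\dholed n$ into a polygon $\Pi^1$ and a sub-crown $\Pi^2$; by full decoration one of these pieces has at least two decorated spikes (or the crown piece always has decorated spikes available near its non-core boundary), and I slide an endpoint of $\alpha$ lying on an edge into an adjacent decorated spike of the appropriate piece, exactly as in Lemma~\ref{lem: vip}, producing a spike-to-edge $\alpha'$ disjoint from all horoball connections and from $c$ that $\alpha$ avoids. (ii) If $\alpha$ has an endpoint on the core loop $c$, then $c$ itself is disjoint from $\alpha$, so $H_c$ belongs to $\mathcal H_{f_\alpha}$; I would slide that core-endpoint of $\alpha$ around, or rather observe that $\alpha$ together with its ``mirror'' across the core gives a second arc disjoint from $\alpha$ and from the same collection — using that the complement of $c$ in the crown is an annulus, so there is room to isotope an endpoint off $c$ onto an edge and still avoid the same horoball connections. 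In each case $\bigcap_{H\in\mathcal H_{f_\alpha}}H$ contains a nondegenerate segment, hence $[f_\alpha(m)]$ is not a vertex.

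Finally I would show that every spike-to-edge arc $\alpha$ joining a decorated spike $v$ to an edge $e$ \emph{does} give a vertex, arguing as in Lemmas~\ref{lem: vip} and~\ref{lem: vpp}: let $\mathcal H_{f_\alpha}$ be the maximal family of hyperplanes $H_\beta$ (for $\beta$ a simple horoball connection, or $\beta=c$) containing $[f_\alpha(m)]$, and suppose $\alpha'$ is another arc with $(\alpha,\alpha')$ an edge of the simplicial decomposition of the face $F_\alpha$ given by Theorem~\ref{thm:mainB}. Then $\alpha'$ is disjoint from $\alpha$ and from every $\beta$ (and from $c$ if $\alpha$ is disjoint from $c$) that $\alpha$ avoids. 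A case analysis on the type of $\alpha'$ (edge-to-edge; spike-to-edge with finite end on a different edge; spike-to-edge with finite end on the same horoball connection; an arc touching the core) produces in every case a horoball connection or the core loop that meets $\alpha'$ while being disjoint from $\alpha$ — for the subtle sub-case where $\alpha'$ and $\alpha$ share the boundary horoball connection at their finite ends, I invoke exactly the four-decorated-spike argument of Lemma~\ref{lem: vip} (using the decorated spike $v_0$ on the far side of $\alpha'$), and when $\alpha'$ dips into the core region I use that $\alpha$ can be surrounded by a maximal horoball connection at $v$ that $\alpha'$ must cross. This contradicts $[f_{\alpha'}(m)]\in\bigcap_{H\in\mathcal H_{f_\alpha}}H$, so $[f_\alpha(m)]$ is a vertex.

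The main obstacle I expect is the presence of the core curve $c$: unlike in the polygon and punctured-polygon cases, the crown has a genuine simple closed geodesic, so arcs can end on $c$ and the relevant hyperplane $H_c$ must be tracked alongside the horoball-connection hyperplanes. Handling the arcs with one or both endpoints on the core — both in showing they are not vertices and in ensuring a spike-to-edge $\alpha$ is not ``unlinked'' from a vertex by some core-touching $\alpha'$ — is where the bookkeeping is most delicate, and where I would lean hardest on Theorem~\ref{thm:mainB} to reduce comparisons to edges of the face decomposition and on Lemma~\ref{lem: simple} to know precisely which simple horoball connections (and that $c$ itself) cut out facets.
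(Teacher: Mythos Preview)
There is a genuine gap, stemming from a misreading of the crown's geometry. In $\dholed n$ the closed geodesic boundary $\gamma$ is a \emph{side} of $\Pi$ in the sense of the arc definitions: an arc with one finite endpoint on $\gamma$ is simply a (non-separating) spike-to-edge or edge-to-edge arc, not a separate ``core'' type. In particular, the spike-to-$\gamma$ arcs are exactly the non-separating spike-to-edge arcs, and the paper shows that they \emph{do} give vertices of $\padm$ (for each decorated spike $v$, the unique arc from $v$ to $\gamma$ is a vertex; cf.\ the last paragraph of the paper's proof). Your plan in~(ii), which attempts to rule out \emph{all} arcs touching $c=\gamma$ as non-vertices, therefore aims at a false conclusion.

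The specific step that fails is the claim ``if $\alpha$ has an endpoint on the core loop $c$, then $c$ itself is disjoint from $\alpha$, so $H_c\in\mathcal H_{f_\alpha}$.'' A strip deformation along an arc ending on $\gamma$ inserts positive width into $\gamma$ itself, so $\mathrm{d}l_\gamma(f_\alpha)>0$ and $[f_\alpha(m)]\notin H_\gamma^+$. Thus the premise of your sliding/mirroring argument is false, and the argument cannot be repaired in that direction. What the paper actually does for $n\geq 2$ is: (a) reduce separating edge-to-edge and separating spike-to-edge arcs to Lemma~\ref{lem: vpp}; (b) for a non-separating \emph{edge-to-edge} arc (one end on $\gamma$, one on an open edge $e$), slide the $e$-endpoint to an adjacent spike to produce a companion spike-to-$\gamma$ arc, showing non-vertex; and (c) for a non-separating \emph{spike-to-edge} arc (spike $v$ to $\gamma$), prove it \emph{is} a vertex by noting the maximal horoball connection at $v$ separates it from every other spike-to-$\gamma$ arc. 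Your final vertex-verification paragraph also misses this last case, since you only analyse arcs landing on open boundary edges. (Minor: for $\dholed 1$ one has $N_\Pi=n+r=2$, not $1$; your conclusion that $\padm$ is a segment is nonetheless correct.)
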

\begin{figure}[ht!]
    \centering
    \includegraphics[width=0.8\linewidth]{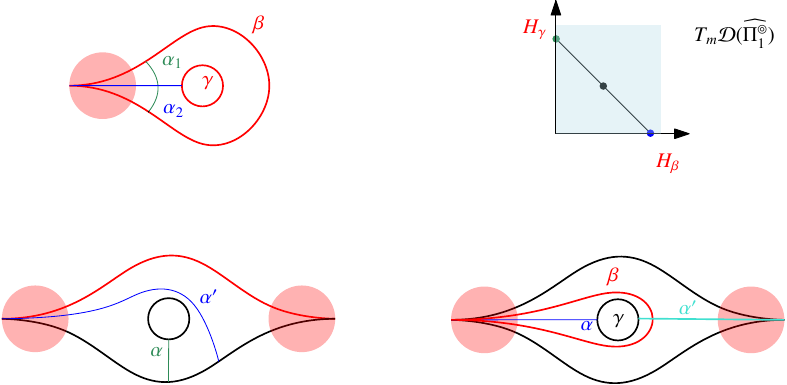}
    \caption{Lemma \ref{lem: vcr}}
    \label{fig: vcr}
\end{figure}

\begin{proof}
    The case $n=1$ is illustrated in Fig.~\ref{fig: vcr}.
The edge-to-edge arc $\al_1$ is the only arc that is disjoint from $\ga$.
Similarly, the spike-to-edge is the only arc that is disjoint from the horoball connection $\beta$.
The projectivised admissible cone is of dimension 1.
This is illustrated in the top right panel.
The points $[f_{\al_1}(m)], [f_{\al_2}(m)] $ form the two vertices of $\padm$.

Now we suppose that $n\geq 2$.
Suppose that $\al \in \partial\ac {\dholed n}$ is an edge-to-edge arc.
From the proof of Lemma \ref{lem: vpp}, we know that $\al$ cannot be a separating arc.
So $\al$ joins the geodesic boundary $\ga$ with a boundary horoball connection $e$.
Since $n\geq 2$, $[f_\al(m)]$ is not an internal point of $\padm$.
Let $v$ be an endpoint of $e$.
Since $n\geq 2$, the spike-to-edge arc $\al'$ joining $v$ with $e$ is disjoint from $\al$ as well as all horoball connections that are disjoint from $\al$.
This is illustrated in the bottom left panel of Fig.~\ref{fig: vcr}.
So $[f_\al(m)]$ is not a vertex. 

Next we prove that for any spike-to-edge arc $\al$, the point $[f_\al(m)]$ forms a vertex.
Using the arguments in the proof of Lemma \ref{lem: vpp}, we already know that $[f_\al(m)]$ forms a vertex for any separating spike-to-edge arc $\al$.
So suppose that $\al$ is non-separating spike-to-edge arc.
So it joins a decorated spike $v$ with $\ga$.
See bottom right panel in Fig.~\ref{fig: vcr}.
If possible, let $\al'$ be another arc that is disjoint from $\al$ as well as all the horoball connections disjoint from $\al$.
We only treat the case when $\al'$ is a spike-to-edge arc joining a decorated spike $v'$ with $\ga$.
Since $\al$ is the only arc that joins $v$ to $\ga$, we have that $v\neq v'$.
Then the maximal horoball connection $\beta$ based at $v$ is disjoint from $\al$ but intersects $\al'$.
Hence $[f_{\al'}(m)]$ does not belong to $\bigcap_{W\in H(\al)} W$.
So $[f_{\al}(m)]$ is a vertex of $\padm$.
\end{proof}

\begin{lemma}
    For $n\geq 4$, all the vertices of $\padm$ of a fully decorated crown $\dholed n$ are non-simple.
\end{lemma}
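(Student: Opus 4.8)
The plan is to follow the pattern of Lemmas~\ref{lem: sip}, \ref{lem:spp} and~\ref{lem: vcr}, but keeping track of one extra facet coming from the core geodesic. First I would record that $\dim \padm = N_{\dholed n}-1 = 2n-1$, and pin down the facets: by Lemma~\ref{lem: simple} every hyperplane $H_\be^+$ with $\be$ a simple horoball connection cuts $\padm$ along a facet (there is no exception here, as $\dholed n$ is not a Möbius strip), and in addition $H_\ga^+$ does so for the core boundary geodesic $\ga$ — indeed $\{\ga\}$ is a spread subset and Proposition~\ref{prop:dimofmac} Case~3 (reducing to Case~2 since $\ga\in\vec\be$) gives $\mathcal{A}(\dholed n,\{\ga\})\cong \mathbb{B}^{2n-2}$, of codimension one. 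Since $\dholed n$ has exactly $n^2$ simple horoball connections, $\padm$ has $n^2+1$ facets. By Lemma~\ref{lem: vcr}, every vertex of $\padm$ is $[f_\al(m)]$ for a spike-to-edge arc $\al$, and $[f_\al(m)]$ lies on a given facet iff the defining curve ($\be$ or $\ga$) is disjoint from $\al$. Thus $[f_\al(m)]$ is simple iff exactly $2n-1$ of those curves avoid $\al$, and it suffices to show that strictly more than $2n-1$ do, for every spike-to-edge $\al$ and every $n\geq 4$. I would then split according to the two types of spike-to-edge arc isolated in the proof of Lemma~\ref{lem: vcr}.

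Second, the \emph{separating} case. Here $\al$ decomposes $\dholed n$ into a fully decorated polygon $\dep p$ and a fully decorated crown $\dholed c$ with $p+c=n+1$, $p\geq 3$, $c\geq 1$, exactly as in the proof of Lemma~\ref{lem:spp}. A simple horoball connection is disjoint from $\al$ iff it lies entirely in one of the two pieces, giving $\binom p2 + c^2$ of them; and $\ga$, which sits in the crown piece $\dholed c$, is also disjoint from $\al$, contributing one more facet. Hence the number of facets through $[f_\al(m)]$ is $d(p)=\binom p2 + (n+1-p)^2 + 1$. As a function of real $p$ this is a convex quadratic (leading coefficient $\tfrac32$) with minimum value $\tfrac1{24}(8n^2+8n+23)$, and one checks that $\tfrac1{24}(8n^2+8n+23) > 2n-1 \iff 8n^2-40n+47>0$, whose real roots are $\tfrac{10\pm\sqrt6}{4}\approx 1.89,\ 3.11$, so the inequality holds exactly for $n\leq 1$ or $n\geq 4$. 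By convexity $d(p)>2n-1$ for every integer $p$ in range once $n\geq 4$: all separating vertices are non-simple.

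Third, the \emph{non-separating} case. By Lemma~\ref{lem: vcr}, $\al$ is then the unique arc from some spike $v$ to $\ga$; in particular $\al$ meets $\ga$, so the $\ga$-facet does \emph{not} pass through $[f_\al(m)]$, and I only count simple horoball connections disjoint from $\al$. Recall the $n^2$ simple horoball connections of $\dholed n$: two for each unordered pair of distinct spikes (cutting off complementary polygons), plus one turning once around $\ga$ for each spike. Cutting $\dholed n$ along $\al$, one finds that those disjoint from $\al$ are exactly: all $2n-1$ simple horoball connections having $v$ as an endpoint; together with, for each unordered pair of the remaining $n-1$ spikes, the unique one of the two whose cut-off polygon avoids $v$, i.e. $\binom{n-1}2$ more. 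This totals $d(\al)= (2n-1) + \binom{n-1}2 = \tfrac{n(n+1)}2$, and $\tfrac{n(n+1)}2 > 2n-1 \iff (n-1)(n-2)>0 \iff n\geq 3$. So all non-separating vertices are non-simple already for $n\geq 3$. Combining the two cases, every vertex of $\padm$ is non-simple whenever $n\geq 4$.

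The step I expect to be the main obstacle is the bookkeeping in the non-separating case: precisely enumerating the simple horoball connections of $\dholed n$ and deciding which survive the cut along the spike-to-$\ga$ arc, including the borderline conventions (a connection sharing the spike $v$ with $\al$ still counts as disjoint, and the boundary sides of $\dholed n$ are among the simple connections being counted) — together with the preliminary verification, via Proposition~\ref{prop:dimofmac}, that $H_\ga^+$ genuinely carves out a facet of $\padm$ of the correct dimension.
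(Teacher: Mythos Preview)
Your proof is correct and follows essentially the same approach as the paper: split vertices into separating and non-separating spike-to-edge arcs via Lemma~\ref{lem: vcr}, count facets through each, and compare to $2n-1$. The only difference is presentational: the paper handles the separating case by a bare reference to Lemma~\ref{lem:spp} (which works because the extra $+1$ you track from the $\gamma$-facet exactly cancels the $+1$ increase in $\dim\padm$ from $2n-2$ to $2n-1$), whereas you make this $\gamma$-contribution explicit and redo the quadratic minimisation; for the non-separating case you both obtain $d(\alpha)=\tfrac{n(n+1)}{2}$, you by direct enumeration and the paper by cutting to $\dep{n+1}$.
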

\begin{proof}
    Let $\al$ be a spike-to-edge arc. 
    The case where $\al$ is separating is already treated in Lemma~\ref{lem:spp}. Suppose that $\al$ is non-separating. Cutting along $\al$ we get a fully decorated ideal polygon with $n+1$ spikes. Then the degree of the vertex $[f_{\al}(m)]$ is given by $ d(\al):=\frac{n(n+1)}{2}$. Then we have
    \begin{align*}
        d(\al) \leq 2n-1& \Leftrightarrow n^2-3n+2\leq 0\\
        & \Leftrightarrow (n-1)(n-2) \leq 0.
    \end{align*}
    So for $n\geq 3$, $d(\al)> 2n-1$.
    \end{proof}

\paragraph{Exceptions.} From Lemma~\ref{lem:spp}, we get that the vertex $[f_{\al}(m)]$, corresponding to a separating spike-to-edge arc, is simple for the cases $(k,n)=(1,2), (1,3)$. For $n=1$, there is one separating simple vertex given by an edge-to-ege arc and one non-separating simple vertex given by a spike-to-edge arc. See top left panel of Fig.\ref{fig: vcr}. When $n=1,2$, every non-separating spike-to-edge arc is a simple vertex. In the case $n=3$, every non-simple vertex is separating. 

\[
\begin{array}{|c|c|c|c|c|}
\hline
n & 2n-1 &\#\text{simple vertices:} & \#\text{simple vertices:} &\#\text{non-simple vertices (degree)}\\
&&  \text{separating} & \text{non-separating} &\\
\hline
1 & 1 & 1 & 1 & 0 \\
2 & 3 & 4 & 2 & 0  \\
3 & 5 & 6 & 0 & 6~(5) \text{ (sep.) }\\
\hline
\end{array}
\]

\subsection{M\"obius strip with spikes}
\begin{lemma}\label{lem: vmb}
Let $\Pi=\mobd n$ and $m\in \tei \Pi$.
When $n=1$, the vertices of $\padm$ are given by one spike-to-edge arc and one edge-to-edge arc.
For $n\geq 2$, all the vertices of $\padm$ are given by $[f_\al(m)]$, where $\al$ is a spike-to-edge arc. 
\end{lemma}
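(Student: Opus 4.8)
The plan is to follow the strategy of Lemmas~\ref{lem: vip}, \ref{lem: vpp} and~\ref{lem: vcr} almost verbatim, the only genuinely new ingredient being the \emph{non-separating} arcs of $\pmob nn$ (those crossing the core curve), which have no analogue in polygons, punctured polygons or crowns. By the second point of Theorem~\ref{thm:mainB}, every vertex of $\padm$ is of the form $[f_\al(m)]$ for a single permitted arc $\al$, and every such $\al$ is edge-to-edge or spike-to-edge, and separating or non-separating; so the proof splits into ruling out edge-to-edge arcs (for $n\geq 2$) and confirming spike-to-edge arcs, treating the separating and non-separating subcases in each. The base case $n=1$ is handled by direct inspection exactly as the $n=1$ crown in Lemma~\ref{lem: vcr}: since $N_\Pi=2$, the cone $\padm$ is a compact segment (by Proposition~\ref{prop:striphomeo}), hence $\mathcal{A}(\pmob 11)$ is a segment with two $0$-simplices, one edge-to-edge and one spike-to-edge (compare Figure~\ref{fig: vcr}).

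Assume now $n\geq 2$. \textbf{Edge-to-edge arcs are not vertices.} Let $\al$ be an edge-to-edge arc. If $\al$ is separating it cuts $\Pi$ into an ideal polygon and a smaller spiked M\"obius strip; using $n\geq 2$ one slides an endpoint of $\al$ lying on an edge into an adjacent decorated spike, on the appropriate side, to obtain a permitted spike-to-edge arc $\al'$ disjoint from $\al$ and from every horoball connection disjoint from $\al$, exactly as in Lemma~\ref{lem: vpp}; then $[f_\al(m)]$ and $[f_{\al'}(m)]$ both lie in the intersection of the hyperplanes $H_\be$ with $\be$ disjoint from $\al$, so $[f_\al(m)]$ is not a vertex. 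If $\al$ is non-separating, then $\Pi\smallsetminus\al$ is a fully decorated ideal polygon and the same sliding argument (again possible since $n\geq 2$) produces such an $\al'$. Full decoration guarantees throughout that no slide ever creates a forbidden ``single undecorated spike'' arc.

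\textbf{Every spike-to-edge arc is a vertex.} Let $\al$ join a decorated spike $v$ to an edge $e$ and let $\mathcal{H}_{f_\al}=\{H_\be\mid \be\text{ disjoint from }\al\}$. If $[f_\al(m)]$ were not a vertex then, by the simplicial decomposition of the face $F_\al\ni[f_\al(m)]$ coming from Theorem~\ref{thm:mainB}, there would be an arc $\al'\neq\al$ with $(\al,\al')$ an edge of $F_\al$, i.e.\ $\al'$ disjoint from $\al$ and from every horoball connection disjoint from $\al$. When $\al$ is separating, the reasoning of Lemma~\ref{lem: vpp} applies directly: one side of $\al$ is an ideal polygon, the other a smaller M\"obius strip, and full decoration forces a decorated spike whose maximal horoball connection is disjoint from $\al$ but meets $\al'$, a contradiction. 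When $\al$ is non-separating, $\Pi\smallsetminus\al$ is a fully decorated ideal polygon, and one argues as at the end of Lemma~\ref{lem: vcr}: the possibilities for $\al'$ (edge-to-edge; spike-to-edge with finite endpoint off $e$; non-separating spike-to-edge joining $v$ to $e$; and so on) are excluded one by one by exhibiting, in each case, a boundary or maximal horoball connection that is disjoint from $\al$ yet crosses $\al'$ --- crucially, $\al$ is, up to isotopy, the only non-separating arc joining $v$ to $e$, which forces any competing spike-to-edge $\al'$ to emanate from a \emph{different} decorated spike $v'$, whose existence is guaranteed by full decoration. Combined with the previous step this gives the second assertion.

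\textbf{The hard part} is the non-separating subcase of the last step: one must carefully verify that no non-separating spike-to-edge arc $\al'$ can simultaneously avoid $\al$ and all horoball connections disjoint from $\al$. This is where the absence of any Dehn twist in $\pmob nn$ (hence the finiteness and rigidity of $\mathcal{A}(\Pi)$), full decoration, and the near-uniqueness of $\al$ among arcs from $v$ to $e$ are all essential. One should also double-check that the endpoint-sliding in the edge-to-edge step always stays within the class of permitted arcs, and treat the small-$n$ degeneracies of the cut-open polygon, which is why $n=1$ is stated separately from $n\geq 2$.
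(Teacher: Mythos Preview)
Your strategy for $n\geq 2$ is the paper's own: separating edge-to-edge arcs are ruled out by the sliding argument of Lemma~\ref{lem: vpp}; non-separating edge-to-edge arcs by the same slide performed in the cut-open polygon; separating spike-to-edge arcs are confirmed via Lemma~\ref{lem: vpp}; and non-separating spike-to-edge arcs via the maximal horoball connection based at~$v$, exactly as at the end of Lemma~\ref{lem: vcr}. That part is correct and essentially identical to the paper.

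Your $n=1$ case, however, has a genuine gap. You deduce from $N_\Pi=2$ and Proposition~\ref{prop:striphomeo} that $\padm$ is a segment, and then assert that $\mathcal{A}(\pmob 11)$ ``is a segment with two $0$-simplices, one edge-to-edge and one spike-to-edge''. This is false: $\pmob 11$ has \emph{three} permitted arcs (one spike-to-edge and two edge-to-edge), so $\mathcal{A}(\pmob 11)$ is a path of two $1$-simplices on three $0$-simplices --- still homeomorphic to a segment, but not a single $1$-simplex. Knowing that $\padm$ is a segment therefore does not tell you \emph{which} two of the three arcs sit at its endpoints, nor what their types are. The paper settles this by actual inspection: the spike-to-edge arc is the unique permitted arc disjoint from the non-separating horoball connection~$\beta$, and one particular edge-to-edge arc is the unique permitted arc disjoint from the core curve~$\gamma$; these two give the vertices, while the third arc corresponds to the interior $0$-simplex of the segment. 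Your argument skips this identification entirely, so as written the $n=1$ assertion is unproved.
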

\begin{figure}[ht!]
    \centering
    \includegraphics[width=0.5\linewidth]{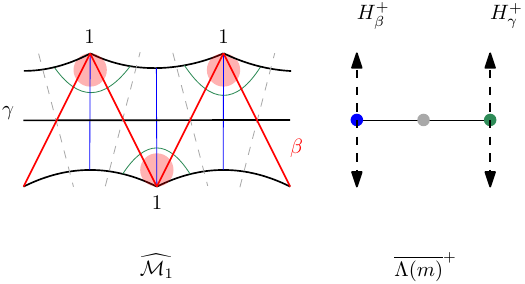}
    \caption{The vertices of $\padm$ for $\mobd 1$ are given by one spike-to-edge arc (in blue) and one edge-to-edge arc (in green).}
    \label{fig: vmb}
\end{figure}

\begin{proof}
The case $n=1$ is illustrated in Fig. \ref{fig: vmb}. There are three permitted arcs. The only spike-to-edge arc, say $\al_1$ is coloured blue. It is disjoint from the non-separating horoball connection $\beta$. The green arc, say $\al_2$, is an edge-to-edge arc disjoint from the core curve $\gamma$. The vertices of the one-dimensional polytope $\padm$ are given by $[f_{\al_1}(m)],[f_{\al_2}(m)]$. 

\begin{figure}[h!]
    \centering
    \includegraphics[width=0.5\linewidth]{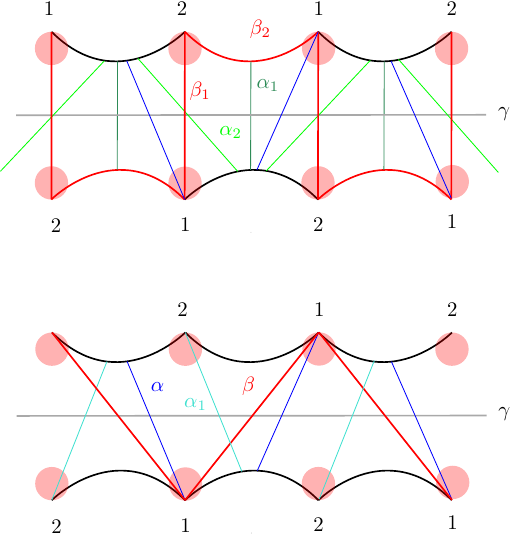}
    \caption{Lemma \ref{lem: vmb}}
    \label{fig: vmb2}
\end{figure}
Next we prove that for $n\geq 2$, the point $[f_{\al}(m)]$ cannot be a vertex of $\padm$ when $\al$ is an edge-to-edge arc. From the proof of Lemma \ref{lem: vpp}, we know that $\al$ cannot be a separating arc.
Since $n\geq 2$, $[f_\al(m)]$ is not an internal point of $\padm$.
Suppose that the endpoints of $\al$ lie on the boundary edges $e,e'$, not necessarily distinct. See Fig.~\ref{fig: vmb2}. Let $v$ be an endpoint of $e$.
Since $n\geq 2$, the spike-to-edge arc $\al'$ joining $v$ with $e$ is disjoint from $\al$ as well as all horoball connections that are disjoint from $\al$.

Next we prove that for any spike-to-edge arc $\al$, the point $[f_\al(m)]$ forms a vertex.
Using the arguments in the proof of Lemma \ref{lem: vpp}, we already know that $[f_\al(m)]$ forms a vertex for any separating spike-to-edge arc $\al$.
So suppose that $\al$ is non-separating spike-to-edge arc.
See bottom panel in Fig.~\ref{fig: vmb2}.
If possible, let $\al_1$ be another arc that is disjoint from $\al$ as well as all the horoball connections disjoint from $\al$.
We only treat the case when $\al_1$ is a non-separating spike-to-edge arc joining a decorated spike $v_1$ with $\ga$.
Since $\al$ is the only non-separating arc that joins $v$ to $\ga$, we have that $v\neq v'$. In the bottom panel, we have $v=1, v_1=2$. 
Then the maximal horoball connection $\beta$ based at $v$ is disjoint from $\al$ but intersects $\al_1$. 
Hence $[f_{\al'}(m)]\notin \bigcap_{W\in \mathcal H_{f_\al}} W$.
So $[f_{\al}(m)]$ is a vertex of $\padm$.
\end{proof}
\begin{lemma}
    For $n\geq 4$, all the vertices of $\padm$ of a fully decorated Möbius strip $\mobd n$ are non-simple.
\end{lemma}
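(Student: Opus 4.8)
The plan is to argue exactly as in Lemmas~\ref{lem: sip} and~\ref{lem:spp} and the crown case above. Write $\Pi=\mobd n=\pmob nn$, so that $N_\Pi=2n$ and $\padm$ is a polytope of dimension $2n-1$; a vertex is non-simple precisely when it lies on strictly more than $2n-1$ facets. By Lemma~\ref{lem: vmb} every vertex of $\padm$ is $[f_\al(m)]$ for some spike-to-edge arc $\al$, and by Lemma~\ref{lem: simple} — together with the remark that the core curve $c$ also cuts out a facet, since $\mathcal{A}(\pmob nn,\{c\})\cong\mathcal{A}(\puncp nn)$ has dimension $2n-2$ by Proposition~\ref{prop:dimofmac} — every facet of $\padm$ is of the form $H_\be^+\cap\padm$, where $\be$ is the core curve or a simple horoball connection that is non-maximal when separating. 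As in the earlier lemmas, $[f_\al(m)]$ lies on the facet associated with $\be$ if and only if $\al$ is disjoint from $\be$ (this is the second point of Theorem~\ref{thm:mainB} with $P=\be$). So I would reduce the statement to proving $N(\al)\geq 2n$, where $N(\al)$ denotes the number of facet-defining curves and horoball connections disjoint from $\al$.

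I would then split into two cases. If $\al$ is non-separating, cutting $\Pi$ along it produces a fully decorated ideal polygon $\dep{n+1}$ (just as for crowns, the ideal endpoint of $\al$ splitting into two), so the horoball connections disjoint from $\al$ are exactly the $\binom{n+1}{2}$ connections of $\dep{n+1}$; moreover none of them is a maximal separating connection of $\Pi$, since such a connection would bound a once-spiked Möbius strip $\pmob 11$ whose core is isotopic to $c$, whereas $i(\al,c)=1$ forces $\al$ to cross that $\pmob 11$ and hence its boundary (and for the same reason $c$ itself is not disjoint from $\al$). This yields $N(\al)=\tfrac12 n(n+1)\geq 2n$, valid already for $n\geq 3$. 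If $\al$ is separating, then one of the two complementary pieces must be non-orientable — otherwise their union along an arc would be orientable — hence is a fully decorated spiked Möbius strip $\mobd p$, the other being a fully decorated ideal polygon $\dep{j+1}$, with $j+p=n$, $j\geq 2$, $p\geq 1$. The curves and connections disjoint from $\al$ are those contained in one of the two pieces, together with the core $c\subset\mobd p$: the polygon contributes $\binom{j+1}{2}$ facets, the Möbius piece contributes $\tfrac12 p(p+1)$ non-separating connections plus $p^2-p$ facet-defining separating ones when $p\geq 2$ (none are lost when $p=1$), and $c$ contributes one more. Substituting $j=n-p$, the inequality $N(\al)\geq 2n$ becomes, for $p\geq 2$, nonnegativity of the convex quadratic $4p^2-(2n+2)p+(n^2-3n+2)$ on $2\le p\le n-2$, whose real minimum is $\tfrac14(3n^2-14n+7)\geq 0$ for $n\geq 5$ (with $n=4$ checked directly); and for $p=1$ it becomes $(n-1)(n-4)\geq 0$. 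In all cases $N(\al)\geq 2n$ when $n\geq 4$.

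The step I expect to be the main obstacle is the facet bookkeeping in the separating case: one must check carefully that the core curve genuinely contributes an extra facet in \emph{every} separating configuration, that a separating horoball connection which is maximal inside the Möbius sub-surface $\mobd p$ stays maximal — hence non-facet-defining — inside the ambient $\mobd n$, and that no maximal separating connection of $\mobd n$ is ever disjoint from a non-separating $\al$. Once these topological points are secured, all that remains is the elementary quadratic estimate and the hand-checks at $n=4,5$, where the bound is tight.
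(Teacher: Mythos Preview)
Your proof is correct and follows the same facet-counting strategy as the paper. The paper's own argument is a two-line deferral --- invoking Lemma~\ref{lem:spp} for separating $\alpha$ and the crown computation for non-separating $\alpha$ --- so your explicit degree count, with its careful tracking of which separating connections stay maximal in the ambient surface and of the core-curve contribution, spells out exactly what those references leave implicit.
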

\begin{proof}
    Let $\al$ be a spike-to-edge arc. 
    The case where $\al$ is separating is already treated in Lemma~\ref{lem:spp}. The remaining case where $\al$ is non-separating is identical to Lemma \ref{lem: vcr}. 
    \end{proof}

 The exceptions are same as in the case of fully decorated crowns.

\newpage

\appendix
\begin{appendices}
\section{Topology of arc complexes}\label{sec:app1}
Here we review some auxiliary material, especially the ballness results used in Section~\ref{sec:mainproofs} to prove Proposition~\ref{prop:dimofmac}. 
Depending on the type of surface some of these proofs are scattered in the literature, but we provide a unified treatment here.

\paragraph{Topological surfaces.} For $n\geq 1$, we denote by $\poly n$ a closed disk with $n\geq 1$ marked points on its boundary. For $n\geq 3$, this surface, when endowed with a convex Euclidean metric, becomes the usual convex polygon. 
For $n\geq 1$, we denote by $\punc n$, a closed once-punctured disk with $n\geq 1$ marked points on its boundary.
For $n\geq 1$, we denote by $\cro n$, the closed disk $\poly n$ with one marked point in its interior labeled as $0$. We will refer to this topological surface $\cro n$ as an orientable \emph{crown}.
For $n\geq 1$, we denote by $\mob n$ the M\"{o}bius strip with $n$ marked point on its boundary. We will refer to this surface as the \emph{non-orientable crown}. 

Given a surface $S$ of the four above types, we consider all possible colourings of its marked points with two colours, say red $(R)$ and blue $(B)$, so that there is a vertex of each colour. Furthermore, in the case of orientable crowns, the internal marked point is always colored blue. Such a colouring is called a \emph{bicoloring}. A bicoloring is called \emph{non-trivial} if there is at least one $R-R$ diagonal and at least one $R-B$ diagonal. 
We denote by $\cac S$ the subcomplex of the arc complex $\ac S$ spanned by the $R-B$ and $B-B$ diagonals only. We call these diagonals \emph{permitted} and the $R-R$ diagonals are called \emph{rejected}. A simplex of $\cac S$ using permitted diagonals is called \emph{permissible}. 

\begin{definition}\label{def: boundary}
In an orientable crown $\cro n$, any arc that does not have an endpoint on the vertex $0$, is called a \emph{boundary arc} or a b-arc in short. In a non-orientable crown $\mob n$, any arc that decomposes the strip into one orientable and one non-orientable subsurface is called a boundary arc or b-arc. In either case, a non-boundary arc is called \emph{core arc} or simply \emph{c-arc}.
\end{definition}
\begin{remark}
A c-arc of a non-orientable crown $\mob n$ always intersects its one-sided core curve. Similarly, any c-arc of a one-holed polygon $\cro n$ intersects the two-sided core curve of $\cro n \smallsetminus \{0\}$.
\end{remark}

%

\begin{notation}\label{notation}
\begin{itemize}
\item A b-arc of a (resp. non-orientable) crown $\cro n$ (resp.\ $\mob n$) decomposes the surface into two tiles: one homeomorphic to a polygon and one homeomorphic to a (resp. non-orientable) crown with at most $n+1$ vertices. Given two vertices $i,j$ such that $1\leq i\leq j \leq n$, there are at most two b-arcs joining them. When $j-i>1$, let $a_i^j$ be the b-arc  whose polygonal tile is homeomorphic to $\poly {j-i+1}$, containing the vertices $i, i+1,\ldots,j$; its non-polygonal tile is homeomorphic to $\cro{n-j+i+1}$ (resp.$\mob{n-j+i+1}$), containing the vertices $j+1,\ldots, n, 1\ldots, i$. When $j-i<n-1$, we denote the second b-arc by $a_j^i$. 
	\item Any c-arc of the surface $\cro n$ joins the internal point $0$ with some vertex $i\in \intbra$ in the boundary. So these arcs are going to be denoted by $c_i$. 
	\item For $i\in \intbra$ and $S=\cro n, \mob n$,  we denote by $M_i$ the maximal b-arc with both its endpoints at the vertex labeled $i$. 
	\item For $i\in \intbra$ and $S= \mob n$, we denote by $L_i$ the maximal c-arc with both its endpoints at the vertex labeled $i$. 
	\item Given two vertices $i,j$ of $\mob n$ such that $1\leq i< j\leq n$, we denote by $(i,j)$ the unique c-arc joining these two vertices. 
\item An arc of $S=\poly n,\cro n, \mob n$ is called \emph{minimal} if it separates a disk homeomorphic to triangle $\poly 3$ from the surface. 
\item The subcomplex $\yacross n$ of $\cac{\mob n}$ generated by permitted c-arcs only is called the permitted internal arc complex. 
\item In the orientable crown $\cro n$, let $\Delta_{n-1}$ be the $(n-1)$-dimensional simplex of $\ac{\cro n}$ generated by all the arcs that join one boundary marked point to the blue marked point in the interior. This simplex is always present in $\cac{\cro n}$, irrespective of the bicoloring.
\end{itemize}
\end{notation}
	\begin{definition}
		Let $\sigma$ be a simplex of the arc complex $\ac S$ of a surface $S$. Let $a_1,\ldots, a_k$ be pairwise disjoint and distinct arcs such that the 0-simplices of $\sigma$ are given by $\sigma^{(0)}=\bigcup\limits_{i=1}^k\{ [a_i]  \}$.
		Then we say that $\sigma$ decomposes the surface $S$ into the \emph{tiles} $\del_1,\ldots, \del_p$ if $$S\setminus \bigcup\limits_{i=1}^k a_i=\del_1\sqcup\ldots\sqcup\del_p,$$ where $\del_1,\ldots, \del_p$ are the connected components. 
\end{definition} 

\subsection{Previous results}
A simplicial complex $X$ is called a \emph{combinatorial $d$-manifold with boundary} if the link of each 0-simplex is either a combinatorial $(d-1)$-sphere or a combinatorial $d-1$-ball and there exists a 0-simplex such that its link is of the latter kind. 

\paragraph{Collapses}Let $X$ be a simplicial complex and $\sigma$ a simplex of $X$. The \emph{face-deletion} of $\sigma$ in $X$ is the subcomplex defined as $\fdel \sigma X:=\{\eta\in X \mid \sigma\nsubseteq \eta\}$. In particular, the face deletion of a 0-simplex $v$ in $X$ is denoted by $X\smallsetminus \{v\}$. 
Let $\sigma\subsetneq\tau \in X$ be two simplices  such that $\tau$ is the only maximal simplex containing $\sigma$.  Then $\sigma$ is called a \emph{free face} of $X$. The tuple $(\sigma, \tau )$ is called a \emph{collapsible pair}. The complex $X$ is said to \emph{simplicially collapse} onto its  subcomplex $\fdel \sigma X$, if $\sigma$ is a free simplex of $X$. The complex $X$ is said to be \emph{collapsible} if there is a finite sequence of simplicial collapses leading to a 0-simplex. By $X\searrow Y$, we mean that the complex $X$ simplicially collapses onto the complex $Y$. 
Whitehead \cite{whitehead} showed that if $X, Y$ are two simplicial complexes such that $X\searrow Y$, then $X$ has the same homotopy type as $Y$. A 0-simplex $v\in X$ is said to be \emph{vertex-dominated} if there exists another 0-simplex $v'\in X$ such that $\Link{v}{X}=v' \Join L,$ where $L$ is a subcomplex of $X$. In other words, any maximal simplex containing $v$ must also contain $v'$. A complex $X$ is said to \emph{strongly collapse} on $ X\smallsetminus \{v\}$ if the 0-simplex $v$ is vertex-dominated in $X$. We will denote this operation as $X\Searrow  X\smallsetminus \{v\}$.

Next we list results that will be used in the proofs:
 \begin{theorem}\label{thm: discol}
The arc complex of a convex polygon $\poly m$ ($m\geq 4$) is PL-homemorphic to a sphere of dimension $m-4$. The arc complex of a once-punctured polygon $\poly m$ ($m\geq 4$) is PL-homemorphic to a sphere of dimension $m-4$. 
 \end{theorem}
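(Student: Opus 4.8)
The plan is to treat both statements at once, since the arc complex of a once-punctured $m$-gon $\punc m$ is known to be simplicially isomorphic to the arc complex of the (unpunctured) $(m{+}2)$-gon, or more precisely to be analyzed by the same inductive machinery; I will spell out the polygon case and indicate the modification for the punctured case. So let $X = \ac{\poly m}$ with $m \geq 4$. First I would recall that $\poly m$ has $m(m-3)/2$ diagonals, that the maximal simplices are the triangulations of $\poly m$ (each using $m-3$ diagonals, so $X$ has dimension $m-4$), and that $X$ is a pseudomanifold: every codimension-one face (a near-triangulation with one quadrilateral left) lies in exactly two maximal simplices, obtained by the two diagonals of that quadrilateral. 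The real content is to promote ``pseudomanifold'' to ``PL sphere'', and the cleanest route is induction on $m$.

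The base case $m = 4$ is immediate: $\poly 4$ has two diagonals, neither pair disjoint, so $X$ is two points $= \mathbb{S}^0$. For the inductive step I would fix a marked vertex $v$ of $\poly m$ and use the link/star decomposition. A diagonal $\delta$ is incident to $v$ iff it separates off, on one side, a subpolygon $\poly k$ with $2 \le k-1$ and, on the other, $\poly{m-k+2}$, with $v$ a vertex of neither tile's interior; there are $m-3$ such diagonals (one for each ``split'' of the fan at $v$). The key computation is that $\operatorname{Link}(\delta, X)$ is the join $\ac{\poly k} \Join \ac{\poly{m-k+2}}$, which by the inductive hypothesis is $\mathbb{S}^{k-4} \Join \mathbb{S}^{m-k-2} = \mathbb{S}^{m-5}$, so every vertex link is a PL sphere of the right dimension and $X$ is a closed combinatorial manifold. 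To identify it as a sphere rather than some other manifold, I would instead argue more directly: the diagonals through $v$ form a subcomplex isomorphic to $\ac{\poly{m-1}}$ (contract the two edges of $\poly m$ at $v$ into one edge — a triangulation using only $v$-diagonals is the same as a triangulation of the $(m-1)$-gon), which by induction is $\mathbb{S}^{m-5}$; and $X$ deformation-retracts appropriately so that $X$ is the suspension-like union of the stars of the $v$-diagonals over this $\mathbb{S}^{m-5}$. Alternatively — and this is probably the slickest — one invokes the standard fact that the arc complex of a polygon is the boundary complex of the (simplicial) associahedron $K_{m-1}$, whose boundary is a PL $(m-4)$-sphere; the excerpt's phrasing ``dual to the associahedron'' in Section~\ref{sec:mainproofs} suggests exactly this identification is available.

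For the once-punctured polygon $\punc m$, the same inductive scheme applies after observing that an arc either misses the puncture (it is then a chord cutting $\punc m$ into a smaller punctured polygon and an ordinary polygon) or is ``peripheral'' around the puncture; tracking links, each vertex link is again a join of strictly smaller arc complexes (of polygons and punctured polygons), which by induction are spheres, and the top dimension is again $m-4$. The main obstacle I anticipate is not the dimension count or the pseudomanifold property — those are bookkeeping — but rather the \emph{global} identification as a sphere: being a closed combinatorial manifold with all the right local data does not by itself force ``sphere'' in general, so one genuinely needs either the associahedron/cyclohedron polytope model or an explicit collapsibility/shelling argument. Given that the excerpt explicitly invokes duality with the associahedron and cyclohedron (and emphasizes collapsibility of the relevant ball triangulations), I expect the intended proof simply cites those polytope structures, reducing Theorem~\ref{thm: discol} to the classical combinatorics of Stasheff's associahedron and Bott–Taubes's cyclohedron.
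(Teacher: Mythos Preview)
The paper does not actually prove this theorem: it appears under the heading ``Previous results'' in Appendix~\ref{sec:app1} and is simply quoted, alongside Theorems~\ref{thm: bicolpoly}--\ref{thm: whitehead}, as input from the literature. So there is no paper-proof to compare against. Your final paragraph is exactly right that the intended justification is duality with the associahedron and cyclohedron, and indeed the paper invokes precisely that identification in the proof of Proposition~\ref{prop:dimofmac}.

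That said, two points in your write-up need repair. First, your attempted ``direct'' argument contains an error: the diagonals of $\poly m$ \emph{through} a fixed vertex $v$ are pairwise disjoint (they form the fan at $v$), so the subcomplex they span is a single $(m-4)$-simplex, not a copy of $\ac{\poly_{m-1}}\simeq\mathbb{S}^{m-5}$. The ``contract the two edges at $v$'' picture you describe produces a bijection between triangulations, not between diagonals-through-$v$ and all diagonals of $\poly_{m-1}$. Your suspension sketch therefore does not go through; as you yourself note at the end, one genuinely needs shellability or the polytope model to upgrade ``closed combinatorial $(m-4)$-manifold'' to ``PL sphere''.

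Second, for the once-punctured $m$-gon the correct dimension is $m-2$, not $m-4$. This is visible elsewhere in the paper: in the proof of Proposition~\ref{prop:dimofmac} the diagonal complex of a punctured disk with $m$ vertices is written as $\mathbb{S}^{m-2}$, dual to the cyclohedron, with $m(m-1)$ diagonals; and Theorem~\ref{thm: bicolpoly} gives the corresponding ball dimension as $m-2$. The ``$\poly m$'' and ``$m-4$'' in the second sentence of the theorem statement are evidently typos (for $\punc m$ and $m-2$), which you have propagated into your sketch. A maximal arc system in $\punc m$ has $m-1$ arcs, not $m-3$.
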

\begin{theorem}\label{thm: bicolpoly}
	The subcomplex $\cac{\poly m}$ of a polygon $\poly m$ with any non-trivial $R-B$ bicoloring is a closed ball of dimension $m-4$. Similarly, the subcomplex $\cac{\punc m}$ of a once-punctured polygon $\punc m$ with any non-trivial bicoloring is a closed ball of dimension $m-2$.
\end{theorem}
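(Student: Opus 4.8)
The plan is to prove both statements together by induction on the number $m$ of marked points, in the sharpened form: for any bicolored $\poly m$ (resp.\ $\punc m$) that is not entirely red, the subcomplex $\cac{\poly m}$ (resp.\ $\cac{\punc m}$) is PL-homeomorphic to $\mathbb{S}^{m-4}$ (resp.\ $\mathbb{S}^{m-2}$) if it has no rejected diagonal, and to $\mathbb{B}^{m-4}$ (resp.\ $\mathbb{B}^{m-2}$) if it has at least one rejected diagonal (with the convention $\mathbb{S}^{-1}=\varnothing$). The sphere alternative is exactly Theorem~\ref{thm: discol} and needs no induction, but it is fed into the inductive step; the substantive content is the ball case, which is what Theorem~\ref{thm: bicolpoly} asserts. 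I would first dispatch the polygon case by induction on $m$, then the once-punctured case by a separate induction on $m$ that invokes the (by then established) polygon statement for the polygonal pieces, since a single arc of $\punc m$ cuts off a polygon which a priori has up to $m+1$ vertices. The base cases $m\le 4$ are checked by inspection.

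For the inductive step the key structural input is the link formula. If $a$ is a permitted diagonal of $\poly m$, cutting along $a$ produces two subpolygons $T_1(a),T_2(a)$ with $|T_1|+|T_2|=m+2$, each carrying the inherited bicoloring; since permittedness of a diagonal depends only on the colors of its endpoints, one reads off directly that $\Link{[a]}{\cac{\poly m}}=\cac{T_1(a)}\Join\cac{T_2(a)}$. As $a$ is permitted, one endpoint of $a$ is blue and lies in both tiles, so neither tile is all red; by the inductive trichotomy each $\cac{T_i}$ is a sphere or ball of dimension $|T_i|-4$, and a join of balls and spheres is a ball unless both factors are spheres, of dimension $(|T_1|-4)+(|T_2|-4)+1=m-5$. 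Hence every vertex link of $\cac{\poly m}$ is a combinatorial $(m-5)$-sphere or $(m-5)$-ball. A short case analysis (using $m\ge 5$ and the existence of a rejected diagonal, whence at least two red vertices) produces a permitted diagonal $a$ one of whose tiles still contains a rejected diagonal, so that $\cac{T_i(a)}$ is a ball and therefore so is $\Link{[a]}{\cac{\poly m}}$. Thus $\cac{\poly m}$ is a combinatorial $(m-4)$-manifold with nonempty boundary.

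It then remains to show that $\cac{\poly m}$ is \emph{collapsible}; granting this, a classical theorem of Whitehead~\cite{whitehead} identifies a collapsible PL manifold with nonempty boundary with a PL ball of the same dimension, completing the induction. For collapsibility I would fix a blue vertex $b$ and use the ear diagonal $\varepsilon=\{b,b+2\}$ (permitted since $b$ is blue): match each permissible simplex $\sigma$ that uses no diagonal incident to the ear tip $b+1$ with $\sigma\cup\{\varepsilon\}$; the surviving simplices are precisely those using some diagonal through $b+1$, and these are disposed of recursively, each such diagonal splitting off a strictly smaller bicolored polygon on which the inductive collapsing scheme applies. Equivalently, one shows $\cac{\poly m}$ strongly collapses onto $\cac{\poly{m-1}}$ obtained by suppressing a well-chosen red vertex, as in~\cite{ppballs}; either way $\cac{\poly m}\searrow\{\mathrm{pt}\}$. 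The once-punctured case runs in parallel: an arc of $\punc m$ either separates a polygon $\poly k$ from a strictly smaller punctured polygon $\punc l$, or (for a loop around the puncture based at a vertex) separates a rigid once-punctured monogon from an $(m+1)$-gon, so the link formula, the dimension count, and the collapsing scheme all go through with $m-4$ replaced by $m-2$ throughout, using the completed polygon case for the polygonal tiles.

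The main obstacle is the collapsibility step: verifying that the recursive ear-toggling matching is acyclic (equivalently, that the chosen sequence of red-vertex suppressions remains a legitimate strong collapse at each stage) is the only delicate point. Once that combinatorial bookkeeping is in place, the identification of $\cac{\poly m}$ and $\cac{\punc m}$ as PL balls of the stated dimensions is formal, via the link formula and Whitehead's theorem.
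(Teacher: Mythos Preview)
The paper does not actually prove Theorem~\ref{thm: bicolpoly}: it is listed under ``Previous results'' and attributed to~\cite{ppballs} (and invoked again as Lemma~\ref{lem:blured}). So there is no in-paper proof to compare against. That said, your outline is exactly the template the paper \emph{does} carry out for the crown and M\"obius cases (Theorem~\ref{strcollhole}, Corollary after it, and Theorem~\ref{thm: strcoll}): compute links via the cut-along-an-arc formula, feed in the inductive sphere/ball dichotomy to get a combinatorial manifold with boundary, establish (strong) collapsibility, and finish with Whitehead's Theorem~\ref{thm: whitehead}. So your strategy is aligned with both the paper's internal methodology and, presumably, with~\cite{ppballs}.

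Two small comments on the collapsibility step you flag. First, the sentence ``strongly collapses onto $\cac{\poly{m-1}}$ obtained by suppressing a well-chosen red vertex'' is not quite right as stated: an arbitrary permitted diagonal through a red vertex $r$ need not be vertex-dominated in $\cac{\poly m}$. What works is to choose $r$ \emph{adjacent to a blue vertex} $b$; then every permitted diagonal $(r,v)$ is dominated by $(b,v)$ (or by the ear $\{b,r{+}1\}$ when $v$ is close), and one removes these diagonals in order of increasing distance from $b$, exactly as in the paper's proof of Theorem~\ref{strcollhole}. Second, your ear-toggling description leaves the unmatched simplices as ``those using some diagonal through $b{+}1$''; this residual subcomplex is a union of stars and is not literally $\cac{\poly{m-1}}$, so the recursion needs to be phrased more carefully (again, the ordered-removal argument above is cleaner). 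With these adjustments your proof goes through, and the once-punctured case is indeed a parallel induction feeding on the completed polygon case, as you say.
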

\begin{theorem}\label{thm: crown}
    For $n\geq 1$, the full arc complexes $\ac{\cro n}$ and $\ac{\mob n}$ of an orientable and a non-orientable crown respectively are combinatorial balls of dimension $n-1$.
\end{theorem}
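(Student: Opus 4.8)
The plan is to prove both statements together by induction on $n$, using only the sphericity of polygon arc complexes (Theorem~\ref{thm: discol}) -- and, where convenient, the ballness of bicoloured polygon complexes (Theorem~\ref{thm: bicolpoly}) -- as external input. The base case $n=1$ is immediate: each of $\cro 1$ and $\mob 1$ carries a single essential arc, so its arc complex is a point, i.e.\ a combinatorial $0$-ball. For $n\geq 2$ the strategy is to show that $\ac{\cro n}$ (and likewise $\ac{\mob n}$) is \emph{(i)} a combinatorial $(n-1)$-manifold with nonempty boundary and \emph{(ii)} collapsible; then I would invoke the classical fact that a collapsible combinatorial manifold with nonempty boundary is a PL ball, which gives the theorem. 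That the dimension is $n-1$ I would read off from an Euler-characteristic count: a maximal system of disjoint arcs triangulates the surface and must contain at least one c-arc (otherwise the interior marked point, resp.\ the one-sided core curve, would lie in the interior of a tile), and counting edges and faces then pins the number of arcs down to $n$.

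For \emph{(i)} I would identify the link of a vertex $[a]$ with the arc complex of the surface obtained by cutting along $a$ (restricted to arcs disjoint from the cut). If $a=c_i$ is a c-arc of $\cro n$, cutting opens the interior point and produces a polygon on $n+2$ vertices, so the link of $[c_i]$ is $\ac{\poly{n+2}}\cong\mathbb{S}^{n-2}$ by Theorem~\ref{thm: discol}: c-arcs are interior vertices. If $a$ is a b-arc, it separates $\cro n$ into a polygon $\poly k$ ($k\geq 3$) and a strictly smaller crown $\cro m$ ($1\leq m<n$, $k+m=n+2$), so by Theorem~\ref{thm: discol} and the inductive hypothesis the link of $[a]$ is $\ac{\poly k}\Join\ac{\cro m}\cong\mathbb{S}^{k-4}\Join\mathbb{B}^{m-1}\cong\mathbb{B}^{n-2}$ (with the conventions $\mathbb{S}^{-1}=\varnothing$, $\varnothing\Join X=X$): b-arcs are boundary vertices, so $\partial\ac{\cro n}$ is the subcomplex spanned by b-arcs, which is nonempty for $n\geq 2$. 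For $\mob n$ the same dichotomy holds: by an Euler-characteristic argument every non-separating arc is a c-arc and cutting along it yields a polygon on $n+2$ vertices, while every separating arc is a b-arc and splits $\mob n$ into a polygon and a strictly smaller spiked Möbius strip; hence the links are again spheres, resp.\ balls, using the orientable-crown case of the induction (or Theorem~\ref{thm: bicolpoly} after cutting further).

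For \emph{(ii)} the idea is to peel the boundary away by a sequence of strong collapses until only the simplex $\Delta_{n-1}$ spanned by the (pairwise disjoint) c-arcs remains -- which is a ball. The seed move is that the maximal b-arc $M_i$ is \emph{vertex-dominated} by $c_i$: the component of $\cro n\smallsetminus M_i$ containing the interior point is a copy of $\cro 1$, whose only arc is $c_i$, so every maximal simplex through $M_i$ contains $c_i$, and $\ac{\cro n}$ strongly collapses by deleting $M_1,\dots,M_n$. One then checks that, in the reduced complex, further b-arcs become dominated -- ordering the deletions by the size of the crown tile cut off by each b-arc and justifying each step via the inductive hypothesis applied to that tile -- and iterates until reaching $\Delta_{n-1}$. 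The Möbius case runs in parallel, with $M_i$ replaced by the maximal core arc $L_i$ and $\cro 1$ by $\mob 1$.

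I expect step \emph{(ii)} to be the real obstacle: whether a given c-arc dominates a given b-arc depends on which arcs have already been removed (for example, a b-arc whose crown tile has three or more spikes is not yet dominated once only the maximal b-arcs are gone), so the deletion order must be chosen carefully and the clean ``smallest tile first'' or ``largest tile first'' orders do not work; finding the correct interleaving is the crux. The non-orientable case adds a further layer of bookkeeping, since the b-/c-arc dichotomy and the way core arcs meet the one-sided curve must be tracked through every cutting operation.
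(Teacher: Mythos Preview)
For the orientable crown $\cro n$, your plan matches the paper's: compute vertex links (c-arcs give spheres via Theorem~\ref{thm: discol}, b-arcs give balls via induction), strongly collapse the b-arcs onto the simplex $\Delta_{n-1}$ spanned by the c-arcs, and apply Whitehead. Your worry about the deletion order, however, is unfounded. The ``smallest crown tile first'' order works cleanly and is exactly what the paper uses: once all b-arcs whose crown tile has at most $k$ boundary vertices have been removed, each b-arc $a_j^i$ with crown tile of size $k+1$ is dominated by $c_i$, because any surviving arc disjoint from $a_j^i$ but crossing $c_i$ would have to be a b-arc lying inside that crown tile with its own crown tile of size at most $k$, hence already deleted. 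No interleaving is needed; the induction on $k$ is entirely straightforward.

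The genuine gap is in the M\"obius case. The c-arcs of $\mob n$ are \emph{not} pairwise disjoint---for $i\neq j$ the loops $L_i$ and $L_j$ must cross (cut along $L_i$ to get a disk; if $L_j$ were disjoint from $L_i$ it would become an embedded arc with both endpoints at the single vertex $j$, hence inessential). So there is no simplex ``$\Delta_{n-1}$ spanned by the c-arcs'' to collapse onto, and your proposed parallel breaks down. Peeling the b-arcs lands instead on the inner arc complex $\across n$ spanned by the c-arcs, which is itself an $(n-1)$-dimensional complex whose collapsibility requires its own argument. The paper handles this in two separate stages: first $\ac{\mob n}\searrow\across n$ (by a b-arc--peeling argument analogous to the crown case), and then a different induction on $n$ showing $\across n$ is strongly collapsible (add a new boundary vertex $n+1$ and remove the new c-arcs $(i,n+1)$ one at a time, each dominated by an old c-arc). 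A smaller slip: in your link computation for a b-arc of $\mob n$, the non-polygonal piece is a smaller M\"obius strip $\mob m$, not an orientable crown, so it is the non-orientable case of the induction hypothesis that is needed there.
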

Barmak-Minian \cite{strong} proved the following theorems about strong collapsibility which will be used later.
\begin{theorem}\label{thm: barmak}
	\begin{enumerate}[label=\alph*)]
	\item \label{simultcoll}
		Let $L$ be a subcomplex of a complex $K$ such that every vertex of $K$ which is not in $L$ is dominated by some vertex in $L$. Then $K\Searrow L$.
	\item \label{strong2simple}
	If $X,Y$ are two simplicial complexes such that $X\Searrow Y$, then $X\searrow Y$.
	\item \label{scjoin}
	Given two simplicial complexes $X,Y$, their join $X\Join Y$ is strongly collapsible if and only if either $X$ or $Y$ is strongly collapsible.
	\end{enumerate}
\end{theorem}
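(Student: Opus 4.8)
These three statements are due to Barmak and Minian \cite{strong}; here is how I would argue them. For (a), the plan is to delete the vertices of $K$ lying outside $L$ one at a time, checking that each is still dominated at the moment it is removed. List them as $v_1,\dots,v_k$, with $v_i$ dominated in $K$ by some $w_i\in L$, so that $\Link{v_i}{K}=w_i\Join L_i'$ for a subcomplex $L_i'$. The only thing to notice is that this cone structure survives the deletion of vertices \emph{outside} $L$: since $w_i\in L$ it is distinct from every $v_j$, so the link of $v_i$ in $K\smallsetminus\{v_1,\dots,v_{i-1}\}$ equals $w_i\Join(L_i'\smallsetminus\{v_1,\dots,v_{i-1}\})$, still a cone on $w_i$; hence $v_i$ remains dominated, and $K\Searrow K\smallsetminus\{v_1\}\Searrow\cdots\Searrow L$.

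For (b) it suffices to realize a single strong collapse $K\Searrow K\smallsetminus\{v\}$, where $v$ is dominated by $v'$ so that $\Link{v}{K}=v'\Join L'$, as a finite chain of elementary simplicial collapses. I would list the simplices $\tau$ of $L'$, including $\tau=\varnothing$, in order of non-increasing dimension, and successively collapse the pairs $(v\ast\tau,\ v\ast v'\ast\tau)$ in that order. When $\tau$ comes up, every simplex of $L'$ strictly containing $\tau$ has already been processed, so within the current complex the only simplex strictly containing $v\ast\tau$ is $v\ast v'\ast\tau$; thus $(v\ast\tau,\ v\ast v'\ast\tau)$ is a genuine free-face pair. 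After all $\tau$ are processed — the last pair being $(\{v\},\{v,v'\})$ — exactly the simplices containing $v$ have been removed, i.e.\ we have reached $K\smallsetminus\{v\}$. Iterating over the vertices deleted in a strong collapse $X\Searrow Y$ then gives $X\searrow Y$.

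For (c), the ``if'' direction is short. If, say, $X$ strong-collapses to a point through successive removals of vertices $u_1,\dots,u_m$ with $u_i$ dominated by $u_i'$ in the $i$-th complex $C_i$ of the sequence, then joining the whole sequence with $Y$ keeps each $u_i$ dominated by $u_i'$, because the link of $u_i$ in $C_i\Join Y$ is $\Link{u_i}{C_i}\Join Y$, still a cone on $u_i'$. Hence $X\Join Y\Searrow\{\mathrm{pt}\}\Join Y$, which is a cone $w\Join Z$; and a cone is strongly collapsible, because every vertex of $Z$ is dominated by $w$, so (a) with $L=\{w\}$ applies. The ``only if'' direction is the one I expect to be hard. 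Here I would pass to the \emph{core}, the minimal complex in the strong homotopy type of a given complex, which Barmak--Minian show is well defined up to isomorphism; a complex is then strongly collapsible precisely when its core is a point. The crux is the combinatorial lemma that \emph{the join of two minimal complexes, each with at least two vertices, is again minimal}: if a vertex of $X_c$ were dominated in $X_c\Join Y_c$ by some $w$, a short case analysis on whether $w$ lies in $X_c$ or in $Y_c$ — comparing the $X_c$- and $Y_c$-parts of the relevant facets — would force one of $X_c,Y_c$ to be a cone, hence to have a dominated vertex, contradicting minimality. Granting this, if neither $X$ nor $Y$ is strongly collapsible then $X_c$ and $Y_c$ are minimal with at least two vertices, so $X_c\Join Y_c$ is minimal and is not a point; since $X\Join Y\Searrow X_c\Join Y_c$, this is the core of $X\Join Y$, which is therefore not strongly collapsible. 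The main obstacle is assembling enough of the strong-homotopy-type/core formalism and proving this minimality-of-join lemma cleanly.
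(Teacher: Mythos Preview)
The paper does not prove Theorem~\ref{thm: barmak} at all: it simply attributes the three statements to Barmak--Minian \cite{strong} and uses them as black boxes. So there is no ``paper's own proof'' to compare against.

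That said, your sketch is sound and tracks the original Barmak--Minian arguments. Part (a) is correct: the key observation that removing vertices outside $L$ preserves the cone structure of $\Link{v_i}{K}$ (because $w_i\in L$ is never deleted) is exactly what is needed. Part (b) is the standard translation of a vertex deletion into a sequence of elementary collapses, and your ordering by non-increasing dimension of $\tau$ guarantees the free-face condition at each step. For part (c), the ``if'' direction is fine; for the ``only if'' direction you correctly identify that one needs the machinery of cores and the lemma that a join of two minimal complexes (each with $\geq 2$ vertices) is minimal. You are right that this is where the real content lies, and your case analysis outline is the right shape, though you stop short of carrying it out. Since the paper only quotes the result, your write-up already goes further than the paper does.
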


The following theorem was proved by Whitehead \cite{whitehead}.
\begin{theorem}\label{thm: whitehead}
A collapsible combinatorial $d$-manifold is a combinatorial $d$-ball.
\end{theorem}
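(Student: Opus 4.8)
This is a classical theorem of Whitehead; the route I would take runs through the theory of regular neighbourhoods in PL manifolds. The plan is to reduce the statement to two standard facts: \textbf{(i)} if a compact PL $d$-manifold $M$ collapses onto a subpolyhedron $X$, then $M$ is PL-homeomorphic to a regular neighbourhood of $X$ in $M$; and \textbf{(ii)} a regular neighbourhood of a single point in a PL $d$-manifold is a combinatorial $d$-ball. Granting these, the theorem is immediate: a collapsible combinatorial $d$-manifold $M$ has finitely many simplices, hence is compact, and by definition it collapses onto a single vertex $v$; by (i) it is PL-homeomorphic to a regular neighbourhood of $\{v\}$ in $M$, and by (ii) that neighbourhood --- hence $M$ itself --- is a combinatorial $d$-ball.

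I would dispose of (ii) first, since it is elementary. After passing to a subdivision in which a regular neighbourhood of $v$ is realised as the closed star of $v$, that neighbourhood is the cone with apex $v$ over the link of $v$; and since $M$ is a combinatorial $d$-manifold with boundary, the link of $v$ is either a combinatorial $(d-1)$-sphere (when $v$ is an interior vertex) or a combinatorial $(d-1)$-ball (when $v$ lies on $\partial M$). In either case the cone over it is a combinatorial $d$-ball. Uniqueness of regular neighbourhoods up to PL homeomorphism then transfers this conclusion to an arbitrary regular neighbourhood of $v$.

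The substance lies in (i), and this is where I expect the main obstacle. The idea is to analyse a collapse $M = M_0 \searrow M_1 \searrow \cdots \searrow M_k = X$ one elementary step at a time: an elementary collapse deleting a free pair $(\sigma,\tau)$ attaches the simplex $\tau$ to $M_{i+1}$ along a ball contained in $M_{i+1}$, and in the presence of a PL manifold structure this attachment is ``ambiently inessential'' --- there should be a PL homeomorphism $M_i \to M_{i+1}$ supported near $\tau$ which carries a regular neighbourhood of $X$ in $M_{i+1}$ onto one in $M_i$. Tracing a fixed regular neighbourhood of $X$ backwards through the $k$ steps then identifies $M_0$ with a regular neighbourhood of $X$ in $M$. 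Making ``ambiently inessential'' precise --- keeping control of the intermediate complexes $M_i$, which are no longer manifolds, and producing the collar structures and general-position moves on which this rests --- is the technical core of Whitehead's argument, and for the complete verification I would refer to \cite{whitehead} (or to a standard textbook treatment of the Regular Neighbourhood Theorem); here we use only the statement.
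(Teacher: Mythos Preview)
The paper does not give a proof of this theorem at all: it simply states the result and attributes it to Whitehead via the citation \cite{whitehead}. Your proposal is consistent with that, since you also ultimately defer the technical core (the Regular Neighbourhood Theorem) to \cite{whitehead}; you go further by sketching the standard reduction via regular neighbourhoods, which is correct and is indeed the classical route. So there is nothing to compare --- the paper's ``proof'' is a bare citation, and your sketch is an accurate expansion of what that citation contains.
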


\subsection{Bicolored crown}
In this section, we prove that the permitted arc complex of a bicolored crown is a closed ball.
\begin{theorem}\label{strcollhole}
	For $n\geq 1$, the subcomplex $\cac{\cro n}$ of a bicolored crown $\cro n$ is strongly collapsible.
\end{theorem}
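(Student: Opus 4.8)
The plan is to prove strong collapsibility of $\cac{\cro n}$ by induction on $n$, using the structural dichotomy between b-arcs and c-arcs together with Theorem~\ref{thm: barmak}\ref{scjoin} (a join is strongly collapsible iff one factor is). The base case $n=1$ is immediate since $\cac{\cro 1}$ is at most a point (there is a single c-arc $c_1$, which is always permitted because the interior vertex $0$ is blue, and the maximal b-arc $M_1$, which is permitted iff vertex $1$ is blue; note the bicoloring hypothesis forces a blue vertex but also a red one, so for $n=1$ the only possibility is vertex $1$ red, giving $\cac{\cro 1}=\{c_1\}$, a point). For the inductive step I would first record the key observation that the simplex $\Delta_{n-1}$ spanned by \emph{all} c-arcs $c_1,\dots,c_n$ is always a face of $\cac{\cro n}$ (Notation~\ref{notation}, last item), since every $c_i$ is permitted. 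Any simplex of $\cac{\cro n}$ containing $c_i$ for some $i$ decomposes the crown into tiles, each of which is either a polygon or a crown with fewer boundary vertices; this will let me recurse.

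The heart of the argument is to exhibit a vertex of $\cac{\cro n}$ that is vertex-dominated, strongly collapse it away, and identify the result as a join or as a bicolored crown/polygon of smaller complexity. Concretely I would look at a c-arc $c_i$ whose endpoint $i$ is blue (one exists since there is a blue boundary vertex, or else we are in the all-blue-boundary situation handled separately). I claim that the link of $c_i$ in $\cac{\cro n}$ is a join of the form $v'\Join L$: cutting along $c_i$ produces a single polygonal tile (a disk with the interior point $0$ absorbed onto the boundary), and inside that tile the arc $c_{i'}$ for a neighboring blue vertex $i'$, or one of the boundary diagonals incident to $i$, is disjoint from \emph{every} permitted arc disjoint from $c_i$ — because any such arc lives in the complement, which is a bicolored polygon in which that particular diagonal is an apex. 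This gives domination of $c_i$ and hence $\cac{\cro n}\Searrow \cac{\cro n}\smallsetminus\{c_i\}$. Iterating over all c-arcs with blue endpoints, and using Theorem~\ref{thm: barmak}\ref{simultcoll} to do these collapses "simultaneously" onto the subcomplex where they are removed, reduces matters to a subcomplex spanned by b-arcs together with the c-arcs at red vertices; that subcomplex should then be recognized, via the decomposition of a b-arc into a polygon tile and a smaller crown tile, as an iterated join/suspension of bicolored polygons (handled by Theorem~\ref{thm: bicolpoly}) and a bicolored crown $\cro{n'}$ with $n'<n$ (handled by the inductive hypothesis). Applying Theorem~\ref{thm: barmak}\ref{scjoin} then finishes the induction.

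I expect the main obstacle to be the careful bookkeeping of \emph{which} vertex dominates $c_i$ in the various cases of the bicoloring — in particular when the blue vertices are sparse or clustered, one must make sure the dominating arc is itself permitted and genuinely disjoint from all of $\Link{c_i}{\cac{\cro n}}$, not merely from $c_i$. A secondary subtlety is the degenerate regime where the boundary is entirely blue (all boundary vertices blue): then there are no rejected diagonals at all, $\cac{\cro n}=\ac{\cro n}$, and I would invoke Theorem~\ref{thm: crown} plus the fact (implicit in its proof, or provable directly by a strong-collapse toward $\Delta_{n-1}$) that this full arc complex is not merely a combinatorial ball but strongly collapsible. Finally, one should double-check that the "simultaneous" strong collapse of all blue-endpoint c-arcs is legitimate, i.e.\ that after removing some of them the others remain dominated; this is exactly the content of Theorem~\ref{thm: barmak}\ref{simultcoll}, so it reduces to verifying the hypothesis that every removed vertex is dominated by a vertex of the target subcomplex, which the apex-of-polygon argument supplies.
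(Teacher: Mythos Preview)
Your central claim --- that a c-arc $c_i$ with $i$ blue is vertex-dominated in $\cac{\cro n}$ --- is false, and this breaks the whole strategy. The link of $c_i$ is the permitted arc complex of the polygon $\poly_{n+2}$ obtained by cutting along $c_i$; in that polygon the vertex $0$ and both copies of $i$ are blue, so there may be very few (or no) rejected diagonals. When all diagonals are permitted, that link is the full associahedral sphere $\ac{\poly_{n+2}}\simeq \mathbb{S}^{n-2}$, which is \emph{never} a cone $v'\Join L$ for $n\geq 2$. Concretely, take $\cro 2$ with vertex $1$ red and vertex $2$ blue: then $\cac{\cro 2}$ is the path $c_1\text{--}c_2\text{--}M_2$, and $c_2$ sits in the two distinct maximal simplices $\{c_1,c_2\}$ and $\{c_2,M_2\}$ with no common second vertex, so $c_2$ is not dominated. (For $\cro 3$ with a single red boundary vertex, the link of a blue $c_i$ is a pentagon $\mathbb{S}^1$, again not a cone.) Your ``apex-of-polygon'' heuristic does not survive these examples; there is no diagonal disjoint from every permitted diagonal of a polygon once enough vertices are blue.

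The direction of collapse is exactly backwards. The simplex $\Delta_{n-1}$ you correctly identified is the \emph{target}, not the thing to remove: the c-arcs are the stable core, and it is the b-arcs that are dominated. The paper's proof removes b-arcs in order of increasing ``depth'' (the size $k$ of the crown piece they cut off): each permitted b-arc $a$ in $\mathcal{B}_{k+1}$ with endpoints $i,j$ is dominated by the c-arc $c_i$, because any permitted arc disjoint from $a$ but intersecting $c_i$ would have to be a b-arc inside the smaller crown piece, i.e.\ lie in some $\mathcal{B}_{k'}$ with $k'\le k$, and those have already been stripped away. After all b-arcs are removed one is left with $\Delta_{n-1}$, which is trivially strongly collapsible. (Incidentally, the ``all boundary vertices blue'' regime you worry about cannot occur: the bicoloring requires a red vertex, and since $0$ is always blue, some boundary vertex must be red.)
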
	
\begin{proof}
	We will prove that the complex $\cac{\cro n}$ strongly collapses onto the simplex $\Delta_{n-1}$. 

	For $k\in \intbra$, let $\mathcal{B}_k$ be the set of all permitted arcs that decompose the surface into two subsurfaces homeomorphic to $\cro {k}$ and $\poly {n-k+2}$, respectively. Define the following subcomplexes of $\ac{\cro n}$:
	\[ 
	Y^k:=\left\{
	\begin{array}{ll}
	\ac{\cro n}, & \text{ when } k=0,\\
	 Y^{k-1}\smallsetminus \{[a] \mid a\in \mathcal{B}_k
 \}& \text{ when } k \in \intbra[1,n-1] \\
	\end{array}
	\right.
	\]
    Then we have the following lemma:
	\begin{lemma}
		For $k=0,\ldots, n-2$, the subcomplex $Y^k$ strongly collapses onto the subcomplex $Y^{k+1}$.
	\end{lemma}
    The set $\mathcal{B}_1$ is empty if and only if all the marked points on the boundary are red. In this case the subcomplex $\cac{\cro n}=\Delta_{n-1}$, which is strongly collapsible. So we suppose that for every $k$, he set $\mathcal{B}_k$ is non-empty.
	Now prove this claim by induction on $k$. \\
	\emph{Base step:} The set $\mathcal{B}_1$ comprises of all the permitted maximal arcs of $\cro n$. 
	Indeed, any maximal arc $M_i$ decomposes the surface $\cro n$ into a subsurface homeomorphic to $\poly{n+1}$ and a subsurface homeomorphic to $\cro 1$, containing the vertex $0$ in its interior and the vertex $i$ in its boundary. So let us suppose that $M_i\in \mathcal{B}_1$ is a permitted maximal arc for some blue vertex $i$.
    
    The link of the 0-simplex $[M_i]$ in the full arc complex is given by 
	\[
	\Link {[M_i]}{Y_0} = \ac{\poly{n+1}} \Join [c_i],\\
	\]
    where $c_i$ is the permitted arc joining the vertex $i$ with the internal vertex 0.
	This shows that whenever $M_i$ is permitted for some $i\in\intbra$, the 0-simplex $[M_i]$ is vertex-dominated by $[c_i]$. 
	Using Theorem \ref{thm: barmak}\ref{simultcoll}, we get that $Y^0\Searrow Y^1$. This finishes the base step $k=0$.\\
	\emph{Induction step:} Suppose that for $k'=0,\ldots, k-1$, we have $Y^{k'}\Searrow Y^{k'+1}$. We need to show that $Y^{k} \Searrow Y^{k+1}$. 
	Let $a\in \mathcal{B}_{k+1}$ be a permitted arc with endpoints on the vertices $i\neq j$. Then following Notation \ref{notation}, we have that $a=a_j^i$. We claim that the 0-simplex $[a]$ is vertex-dominated by the permitted $[c_i]$. Any permitted arc that is disjoint from $a$ but not with the arc $c_{i}$ is a b-arc $\aij {j'}{i'}$ contained in $\cro{k+1}$. Hence, $i<i'\leq j'\leq j$. See Fig. \ref{fig: bicrown}. Such an arc $\aij {j}{i+1}$  lies in $\mathcal{B}_k$. The 0-simplex $[\aij {j}{i+1} ]$ is absent from $Y^k$, by induction hypothesis. Thus every arc in $\mathcal{B}_{k+1}$ is vertex dominated in $Y^k$. 
Using Theorem \ref{thm: barmak}\ref{simultcoll}, as in the base step, we conclude that $Y^k \Searrow Y^{k+1}$.
Since any boundary arc lies in $\mathcal{B}_k$ for some $k$, we get that $Y^{n-1}= \Delta_{n-1}$. This concludes the proof.
	\end{proof}

\begin{figure}[ht!]
    \centering
    \includegraphics[width=0.5\linewidth]{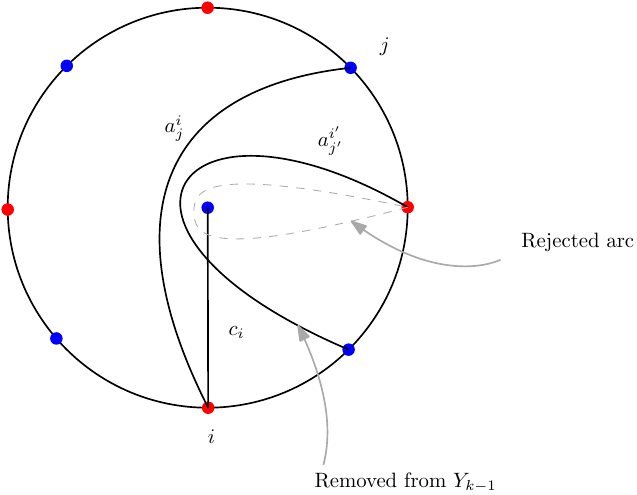}
    \caption{}
    \label{fig: bicrown}
\end{figure}

\begin{cor*}\label{acrownball}
For $n\geq1$, the arc complex of an orientable crown $\cro n$ is a combinatorial ball of dimension $n-1$. 
\end{cor*}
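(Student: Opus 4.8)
The plan is to deduce the corollary from Theorem~\ref{strcollhole} together with the fact that the full arc complex $\ac{\cro n}$ is a combinatorial manifold with boundary, via Whitehead's Theorem~\ref{thm: whitehead}. So the argument has three ingredients: (a) $\ac{\cro n}$ is a combinatorial $(n-1)$-manifold with nonempty boundary; (b) $\ac{\cro n}$ is collapsible; (c) conclude by Whitehead.

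For (a) I would argue by induction on $n$, the base case $n=1$ being trivial since $\ac{\cro 1}$ is a single point $\mathbb{B}^0$. For the inductive step one computes the link in $\ac{\cro n}$ of each $0$-simplex. If $[c_i]$ is a c-arc, cutting $\cro n$ along $c_i$ produces a polygon $\poly{n+2}$, so $\Link{[c_i]}{\ac{\cro n}}=\ac{\poly{n+2}}\cong\mathbb{S}^{n-2}$ by Theorem~\ref{thm: discol}. If $[a]$ is a b-arc, cutting along it decomposes $\cro n$ into a polygon $\poly k$ and an orientable crown $\cro{k'}$ with $k+k'=n+2$ and — crucially — $k'\leq n-1$ (the crown tile is $\cro1$ for a maximal arc $M_i$ and $\cro{k'}$ with $k'\leq n-1$ for a non-maximal b-arc $a^i_j$); hence $\Link{[a]}{\ac{\cro n}}=\ac{\poly k}\Join\ac{\cro{k'}}\cong\mathbb{S}^{k-4}\Join\mathbb{B}^{k'-1}\cong\mathbb{B}^{n-2}$, using Theorem~\ref{thm: discol}, the induction hypothesis, and the standard PL fact that the join of a combinatorial sphere with a combinatorial ball is a combinatorial ball (with the usual conventions $\mathbb{S}^{-1}=\varnothing$, $X\Join\varnothing=X$ to absorb triangular tiles $\poly3$). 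So every vertex link is a combinatorial $(n-2)$-sphere or $(n-2)$-ball, and the maximal arcs $M_i$ supply vertices with ball links; thus $\ac{\cro n}$ is a combinatorial $(n-1)$-manifold with nonempty boundary.

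For (b), the strong-collapse argument in the proof of Theorem~\ref{strcollhole} applies verbatim to the full complex $\ac{\cro n}$ — this is exactly the case in which no arc is ``rejected''. Indeed the argument only uses the identity $\Link{[M_i]}{\ac{\cro n}}=\ac{\poly{n+1}}\Join[c_i]$, which exhibits $[M_i]$ as vertex-dominated by $[c_i]$, and, at the later stages, the analogous fact that each b-arc $a^i_j$ is vertex-dominated by $[c_i]$; neither depends on any bicoloring. Hence $\ac{\cro n}\Searrow\Delta_{n-1}$, and since $\Delta_{n-1}$ is a simplex we get, via Theorem~\ref{thm: barmak}\ref{strong2simple}, that $\ac{\cro n}$ collapses onto a point. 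Finally, step (c): by Whitehead's Theorem~\ref{thm: whitehead} a collapsible combinatorial $(n-1)$-manifold is a combinatorial $(n-1)$-ball, so $\ac{\cro n}$ is a combinatorial ball of dimension $n-1$.

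The main obstacle is step (a): one must set the induction up so that \emph{every} crown tile arising from cutting along a b-arc has strictly fewer than $n$ boundary marked points, and one must track the small degenerate cases ($\poly 3$ giving an empty ``sphere'', bigon tiles, and $\cro 1$) so that the join computation still outputs a ball of the correct dimension $n-2$. Once the combinatorial manifold structure is in hand, collapsibility is immediate from Theorem~\ref{strcollhole} and the rest is Whitehead; the remaining verifications are routine bookkeeping. (One could instead cite the combinatorial manifold structure of $\ac{\cro n}$ from the literature, but the inductive argument above keeps the treatment self-contained.)
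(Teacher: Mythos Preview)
Your proof is correct and follows essentially the same strategy as the paper: an inductive link computation to show the complex is a combinatorial $(n-1)$-manifold, strong collapsibility from Theorem~\ref{strcollhole}, and then Whitehead's Theorem~\ref{thm: whitehead}. Two minor differences: the paper first verifies purity and the pseudo-manifold property before the link computation (redundant once links are known, so your shortcut is fine), and the paper's proof is actually written for the more general bicolored complex $\cac{\cro n}$ rather than the full $\ac{\cro n}$, so the links there involve $\cac{\poly k}$ and $\cac{\cro{k'}}$ and an extra case split according to whether the induced bicoloring on the polygonal tile is trivial; your uncolored version is the cleaner special case.
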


\begin{proof}
Let $\sigma$ be any simplex of $\cac{\cro n}$. If there are no b-arcs in $\sigma$, then it contains only c-arcs that join the interior marked point with some marked points on the boundary. In this case, the simplex $\sigma$ can be completed into the unique triangulation containing only such type of arcs. Now suppose that $\sigma$ contains at least one b-arc. It divides the surface into a bicolored polygon $P^1$ homeomorphic to $\poly k$ and one crown $P^2$ homeomorphic to $\cro{n-k+2}$.  Choose the b-arc, say $b$, such that the crown in its complement has no b-arc. Since, $b$ is permitted, at least one one its endpoints is blue. So both $P^1$ and $P^2$ have at least one blue vertex. A maximal simplex of $\cac{\cro n}$ contains $\sigma$ if and only if $\sigma$ restricted to $P^1, P^2$ are maximal simplices $\sigma_1,\sigma_2$ of $\cac{P^1}, \cac{P^2}$, respectively. So we have that 
\begin{align*}
    \dim \sigma &= \dim \sigma_1+\dim\sigma_2+2\\
    &= k-4 + (n-k+2-1)+2\\
    &=n-1.
\end{align*}
This shows that every maximal simplex of $\cac{\cro n}$ has dimension $n-1$.

Next, we show that every $(n-2)$-simplex $\sigma$ of $\cac {\cro n}$ is contained in at most two $(n-1)$-simplices of $\ac {\cro n}$. Firstly, suppose that $\sigma $ is a codimension 1 boundary simplex of $\cac{\cro n}$ as well as $\ac{\cro n}$. Then it decomposes the surface into triangles and a crown $\mob 1$ with only one vertex, say $i\in \intbra$. The only way to triangulate this tile is to take the c-arc $c_i$. So $\sigma$ is contained in the unique maximal simplex $\sigma\cup \{[c_i]\}$. Next, we suppose that $\sigma\in \cac{\cro n}$ contains an internal arc $c$. Cutting the surface along this arc we get a surface homeomorphic to a non-trivially bicolored convex $n+2$-gon. The restriction of $\sigma$ on this polygon gives a $(n-3)$-simplex, say $\sigma'$, of $\cac{\poly {n+2}}$. Since the latter is a $(n-2)$-pseudo-manifold with boundary (\cite{ppballs}), the restriction of $\sigma'$ is contained in at most two $(n-2)$-simplices of $\ac{\poly{n+1}}$. Taking the join of these two maximal simplices with $[c]$ we get that $\sigma$ is contained in exactly two $(n-1)$-simplices of $\cac {\cro n}$. So we get that the simplicial complex $\ac {\cro n}$ is a pseudo-manifold. \\
\begin{figure}
    \centering
    \includegraphics[width=0.5\linewidth]{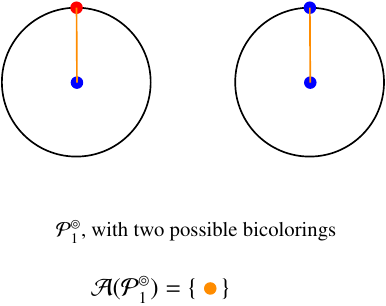}
    \caption{The arc complex of $\cro 1$ with any bicolouring is a ball of dimension zero.}
    \label{fig:acrown}
\end{figure}
Now, we prove that the arc complex is a combinatorial ball of dimension $n-1$, by induction. From Fig. \ref{fig:acrown}, we know that the statement is true for $n=1$. Suppose that the statement is true for $n'=1,\ldots, n-1$. 
Let $\al$ be a c-arc. Then it decomposes the surface into a $\poly {n+2}$. Its link is then given by
\begin{align*}
\Link{[\al]} {\cac{\cro n}}=\cac {\poly{n+2}}&= \left\{
\begin{array}{rl}
 \s{n-2}   & \text{ when $\poly{n+2}$ has a trivial bicoloring,} \\
\ball{n-2} & \text{ otherwise.}
\end{array}
\right.
\end{align*}
Next, suppose that $\al$ is a permissible b-arc of the form $a_i^j$ with $j-i\geq2$. Then it decomposes the surface into a bicolored polygon $\poly{j-i+1}$ and a bicolored crown $\cro{n-j+i +1}$. Then its link is given by
\begin{align*}
\Link{[\al]} {\cac{\cro n}}&=\cac{\poly{j-i+1}}\Join \cac{\cro{n-j+i+1}}\\
&=\left\{
\begin{array}{rl}
 \s{j-i+1-4}\Join \ball{n-j+i+1-1 }    & \text{ when $\poly{j-i+1}$ has a trivial bicoloring,} \\
   \ball{j-i+1-4}\Join \ball{n-j+i+1-1 }  & \text{ otherwise.}
\end{array}
\right.\\
&=\ball{n-2}.
\end{align*}
So we get that $\cac{\cro n}$ is a combinatorial manifold of dimension $n-1$. Finally, using Whitehead's Theorem \cite{whitehead} and our Theorem \ref{strcollhole}, we get that the full arc complex is a combinatorial $(n-1)$-ball.
\end{proof}

\subsection{Bicolored non-orientable crown}
Next we prove the ballness of the permitted arc complex of a bicolored $\mob n$.
\begin{theorem}
For $n\geq 2$, the subcomplex $\cac{\mob n}$ of the arc complex $\ac{\mob n}$ of a non-trivially bicolored non-orientable crown $\mob n$ is a combinatorial ball of dimension $n-1$.
\end{theorem}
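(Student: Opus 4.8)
The plan is to follow, for the non-orientable crown, the same scheme used for the orientable crown in Theorem~\ref{strcollhole} and its Corollary: I would prove that $\cac{\mob n}$ is a combinatorial $(n-1)$-manifold with nonempty boundary, that it is collapsible, and then invoke Whitehead's Theorem~\ref{thm: whitehead}. Both facts are established by induction on $n$; the base case $n=2$ is checked directly (for the unique nontrivial bicoloring, $\cac{\mob 2}$ is a path on three vertices, i.e.\ a combinatorial $1$-ball).

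\emph{Manifold structure.} Fix a nontrivial bicoloring and compute $\Link{[\alpha]}{\cac{\mob n}}$ for every permitted arc $\alpha$. Cutting $\mob n$ along any permitted core arc — whether of type $(i,j)$ with $i<j$, or a maximal core arc $L_i$ based at a blue vertex — yields a bicolored polygon $\poly{n+2}$, so the link is $\cac{\poly{n+2}}$, which is $\s{n-2}$ or $\ball{n-2}$ by Theorems~\ref{thm: discol} and~\ref{thm: bicolpoly}. A permitted boundary arc $a_i^j$ with $j-i\geq 2$ separates a bicolored polygon $\poly{j-i+1}$ from a bicolored non-orientable crown $\mob{n-j+i+1}$ (with $2\leq n-j+i+1<n$), so
\[
\Link{[a_i^j]}{\cac{\mob n}}=\cac{\poly{j-i+1}}\Join\cac{\mob{n-j+i+1}};
\]
by the inductive hypothesis (and Theorem~\ref{thm: crown} when the induced bicoloring on the tile has no red--red arc) the non-orientable factor is the ball $\ball{n-j+i}$, and, since a join with a ball is a ball, the link is a ball of dimension $(j-i-3)+(n-j+i)+1=n-2$. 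Finally, a maximal boundary arc $M_i$ (based at a blue vertex) separates $\mob n$ into $\poly{n+1}$ and the one-vertex tile $\mob 1$, whose arc complex is the single point $[L_i]$, so $\Link{[M_i]}{\cac{\mob n}}=\cac{\poly{n+1}}\Join[L_i]$, a cone, hence a ball of dimension $n-2$. As the links $\Link{[M_i]}{\cac{\mob n}}$ are balls, $\cac{\mob n}$ has nonempty boundary, and it is a combinatorial $(n-1)$-manifold. (Here the verifications that $\cac{\mob n}$ is pure of dimension $n-1$ and that every codimension-one permissible simplex lies in at most two maximal ones are carried out exactly as in the Corollary to Theorem~\ref{strcollhole}, by cutting along a core arc present in the simplex and reducing to the polygon case.)

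\emph{Collapsibility.} By Theorem~\ref{thm: whitehead} it now suffices to show $\cac{\mob n}$ is collapsible, which I would deduce from strong collapsibility by the filtration technique of Theorem~\ref{strcollhole}. For $1\leq k\leq n-1$ let $Y^k$ be the subcomplex of $\cac{\mob n}$ obtained by deleting every permitted boundary arc whose non-orientable tile has at most $k$ vertices, with $Y^0:=\cac{\mob n}$. At each stage the vertex $[a_i^j]$ being removed is dominated in $Y^{k-1}$ by the permitted core arc issuing from its endpoint $i$ into the non-orientable tile: any permitted arc disjoint from $a_i^j$ but meeting that core arc is a boundary arc contained in the non-orientable tile, hence has a strictly smaller non-orientable tile and is already absent. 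By Theorem~\ref{thm: barmak}(a), $Y^{k-1}\Searrow Y^k$, and the filtration terminates at the permitted internal arc complex $\yacross n$. It remains to show $\yacross n$ is strongly collapsible; this is done by a further, analogous vertex-domination argument among the permitted core arcs (equivalently, one exhibits $\yacross n$ as strongly collapsible onto a single simplex). Hence $\cac{\mob n}\Searrow\yacross n\Searrow\{\mathrm{pt}\}$, so $\cac{\mob n}$ is collapsible by Theorem~\ref{thm: barmak}(b).

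\emph{Conclusion and main obstacle.} Being a collapsible combinatorial $(n-1)$-manifold with nonempty boundary, $\cac{\mob n}$ is a combinatorial $(n-1)$-ball by Theorem~\ref{thm: whitehead}. The genuinely non-orientable difficulties are twofold. First, one must correctly identify the homeomorphism types of the two tiles cut off by each permitted arc — in particular that \emph{every} permitted core arc cuts $\mob n$ into a polygon $\poly{n+2}$ — since this is what drives the inductive link computation. Second, and harder, is the strong collapsibility of the permitted internal arc complex $\yacross n$: unlike the orientable case, where the terminal object of the collapse is the single simplex $\Delta_{n-1}$ spanned by all spokes to the interior vertex, the permitted core arcs of $\mob n$ do not span one simplex, so a dedicated collapsing argument is required. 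Beyond these, there is only routine bookkeeping for the small degenerate tiles ($\mob 1$, triangles) appearing at the bottom of the filtration and in the base case.
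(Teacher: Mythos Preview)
Your overall architecture matches the paper's exactly: compute links to establish the combinatorial manifold structure, collapse $\cac{\mob n}$ onto the permitted inner arc complex $\yacross n$, show $\yacross n$ is (strongly) collapsible, and invoke Whitehead. The link computations are fine.

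There is, however, a real gap in your filtration step $\cac{\mob n}\Searrow\yacross n$. You claim that a boundary arc $a$ with non-orientable tile $\mob_k$ is vertex-dominated in $Y^{k-1}$ by ``the permitted core arc issuing from its endpoint $i$ into the non-orientable tile'', arguing that any permitted arc disjoint from $a$ but meeting this core arc must be a b-arc of the tile (hence already deleted). This is precisely where the non-orientable case diverges from the orientable one: in $\cro n$ the spokes $c_i$ are pairwise disjoint, so the only arcs crossing $c_i$ are b-arcs; in $\mob n$ the core arcs intersect one another. Concretely, the link of $[a]$ in $Y^{k-1}$ is
\[
\cac{\poly_{n-k+2}}\;\Join\;\yacross{k}
\]
(with the induced bicoloring on the $\mob_k$ tile), because all b-arcs interior to the $\mob_k$ tile are gone but all of its c-arcs remain. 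For $k\geq 3$ the second factor is in general \emph{not} a cone over any vertex: already for $\mob_3$ with all three tile-vertices blue, the complex $\across 3$ is the $2$-disk made of the four triangles $\{L_1,(1,2),(1,3)\}$, $\{L_2,(1,2),(2,3)\}$, $\{L_3,(1,3),(2,3)\}$, $\{(1,2),(1,3),(2,3)\}$, and no vertex lies in all four (recall $L_i\cap L_j\neq\varnothing$ and $L_i\cap(j,k)\neq\varnothing$ for $i\notin\{j,k\}$). Hence $[a]$ is not dominated and the strong collapse $Y^{k-1}\Searrow Y^k$ fails for $k\geq 3$.

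The paper is aware of this: it proves strong collapsibility only for $\yacross n$ (Theorem~\ref{thm: strcoll}), while for the passage $\cac{\mob n}\searrow\yacross n$ it invokes only ordinary \emph{simplicial} collapse, citing the argument in \cite{ppstrong}. So your plan is right, but the outer filtration cannot be executed by vertex domination alone; you need a genuine sequence of elementary collapses (free-face deletions), not just deletions of dominated vertices. The obstacle you flagged at the end---the collapsibility of $\yacross n$---is indeed one of the two hard points, but the other one is exactly the step you treated as routine.
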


In order to prove the above theorem, firstly we show that,
\begin{theorem}\label{thm: strcoll}
	For $n\geq 2$ and every non-trivial bicoloring, the permitted inner arc complex $\yacross n$ of a non-orientable crown $\mob n$ is strongly collapsible.
\end{theorem}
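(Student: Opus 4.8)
The plan is to run the same machine as in the proof of Theorem~\ref{strcollhole}: exhibit a tower of simplicial strong collapses $\yacross n = Z^0 \Searrow Z^1 \Searrow \dots \Searrow Z^N$ whose terminal complex $Z^N$ is a single simplex — hence strongly collapsible — with each step justified by Theorem~\ref{thm: barmak}\ref{simultcoll} (simultaneous removal of dominated vertices).

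First I would fix notation. Relabel the boundary marked points $1,\dots,n$ cyclically; non-triviality of the bicoloring provides at least one blue vertex, which I fix and call $1$ (so $\yacross n$ is nonempty, e.g.\ $[L_1]\in\yacross n$). The $0$-simplices of $\yacross n$ are the permitted c-arcs: the c-arcs $(i,j)$, $1\le i<j\le n$, not both of whose endpoints are red, together with the maximal c-arcs $L_b$ at the blue vertices $b$. The local input, playing the role of the identity $\Link{[M_i]}{\ac{\cro n}}=\ac{\poly{n+1}}\Join[c_i]$ used in Theorem~\ref{strcollhole}, is that cutting $\mob n$ along a permitted c-arc $\alpha$ yields a bicolored polygon $P_\alpha$ in which the core curve of $\mob n$ becomes a properly embedded arc $\bar c$, and the permitted c-arcs of $\mob n$ disjoint from $\alpha$ are exactly the diagonals of $P_\alpha$ meeting $\bar c$; this describes $\Link{[\alpha]}{\yacross n}$.

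The heart is the domination step. I would order the permitted c-arcs by a ``depth'' complexity relative to the distinguished vertex $1$ — informally, how many marked points one must pass over to bring an endpoint of the arc to $1$ — and let stage $k$ delete the c-arcs of complexity $k$, in decreasing order of $k$. The claim to establish at each stage is the exact counterpart of the one in Theorem~\ref{strcollhole}: once all c-arcs of complexity $>k$ are gone, a c-arc $\alpha$ of complexity $k$ that is not already incident to $1$ is vertex-dominated in the current complex by the c-arc $\alpha'$ obtained from $\alpha$ by pushing the relevant endpoint one step towards $1$ (still permitted, since $1$ is blue), because every surviving permitted c-arc disjoint from $\alpha$ is automatically disjoint from $\alpha'$ — the c-arcs that would separate $\alpha$ from $\alpha'$ all having strictly larger complexity and hence having been deleted already. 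The loop arcs $L_b$ with $b\ne 1$ are absorbed into the same induction by pushing the loop towards $1$. After all stages, $\yacross n$ has strongly collapsed onto the subcomplex spanned by the permitted c-arcs incident to $1$ — the arcs $(1,2),\dots,(1,n)$ and $L_1$ — and this subcomplex, whose generators issue from a common boundary vertex and can be realised pairwise disjoint, is a single simplex; composing with a collapse of that simplex to a point proves the theorem.

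The principal difficulty, which makes the bookkeeping heavier than in the orientable crown of Theorem~\ref{strcollhole}, is the half-twist: a c-arc does not separate $\mob n$, and cutting along one reverses the boundary order of part of the resulting polygon, so ``the side towards $1$'', the right complexity function, and the right dominating arc must all be set up carefully, and the ordinary c-arcs $(i,j)$ versus the loop c-arcs $L_b$ genuinely need separate handling. I would also have to determine precisely which diagonals of $P_\alpha$ re-glue to c-arcs rather than b-arcs of $\mob n$, and to check the one non-obvious disjointness used at the end — that $L_1$ can be realised disjoint from each $(1,j)$. Finally I would verify the small cases $n=2,3$ directly, where $\yacross n$ is a simplex of dimension at most $2$ and the filtration degenerates.
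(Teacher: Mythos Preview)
Your overall shape --- strongly collapsing $\yacross n$ onto the ``fan'' simplex at a fixed blue vertex $1$, spanned by $L_1$ and the $(1,j)$ --- is correct, and that simplex is indeed the terminal object the paper reaches as well. The paper, however, organises the collapse differently: it argues by induction on the number $n$ of boundary vertices, showing that after inserting a new vertex $n{+}1$ one can strongly collapse away the (linearly ordered) new c-arcs $(i,n{+}1)$ and land back on $\yacross{n}$. At each such removal the dominating vertex is $[(1,i)]$, i.e.\ an arc going \emph{all the way} to the fixed blue vertex $1$, not ``one step towards $1$''. This packaging avoids ever having to define a global complexity function on all c-arcs: at each stage only the arcs incident to the newest vertex are in play, and their natural order along the boundary already supplies the filtration.

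That difference also exposes a genuine gap in your sketch. Your dominating arc $\alpha'$ is obtained from $\alpha=(i,j)$ by pushing one endpoint a single step; the parenthetical ``still permitted, since $1$ is blue'' does not apply, because after one step neither endpoint is at $1$. Concretely, if $(i,j)$ is permitted only because $j$ is blue (with $i$ red) and you push $j$ one step to a red vertex $j'$, the resulting $(i,j')$ has two red endpoints and is rejected --- so it is not available as a dominator inside $\yacross n$. To repair this you must either always push the \emph{red} endpoint (keeping the blue one fixed), which then constrains the direction of the push and hence the form your complexity function can take, or abandon the one-step push and dominate directly by an arc to $1$, as the paper does. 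Either fix is plausible, but until the complexity function and the choice of dominating arc are pinned down --- the very difficulties you flag in your last paragraph --- the domination claim ``the c-arcs that would separate $\alpha$ from $\alpha'$ all have strictly larger complexity'' remains unproved.
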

	\begin{proof}
We prove the statement by induction on the number of vertices $n$.
When $n=2$, there is only one non-trivial colouring, given by one blue vertex, say $1$ and one red vertex at $2$. In this case, the inner arc complex is the 1-simplex generated by $[L_1]$ and $(1,2)$, which is strongly collapsible.

Next we suppose that the statement holds for all $n'\in \intbra$ and for all non-trivial bicolorings. Given the surface $\mob n$, without loss of generality, we add a new vertex labeled $n+1$ between the vertices 1 and $n$ to get the surface $\mob {n+1}$. If $\mob {n}$ has no blue vertices, then the vertex labeled $n+1$ has to be blue in order to make the bicoloring of $\mob{n+1}$ non-trivial. In this case the permitted inner arc complex $\yacross{n+1}$ is a $(n-1)$-simplex generated by the permitted c-arcs $a_i:= [(i,n+1)]$, for $i\in \intbra[1,n+1]$. So it is strongly collapsible, thus finishing the proof. So let us suppose that $\mob n$ has at least one blue vertex. Then the bicoloring of $\mob{n+1}$ is nontrivial in both the cases where the vertex $n+1$ is red and blue. By induction hypothesis, $\yacross{n}$ is strongly collapsible. We prove that the complex $\yacross{n+1}$ strongly collapses onto the complex $\yacross{n} $ by removing all new permitted arcs that arise in $\mob{n+1}$ due to the addition of the new vertex. 

\begin{figure}
    \centering
    \includegraphics[width=0.5\linewidth] {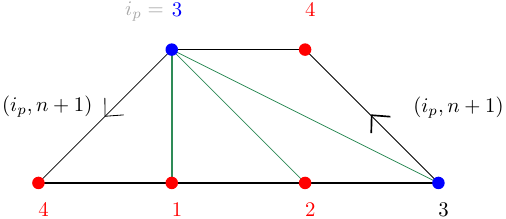}
    \caption{Step 1 of Theorem \ref{thm: strcoll}: $n=3,\, i_p=3$. Any arc disjoint from $(i_p, n+1)$ (coloured in green) is also disjoint from $(1, i_p)$.}
    \label{fig: step1}
\end{figure}
Suppose that the arcs $\{(i_k,n+1)\}_{k=1}^p$ are permitted for $1\leq\ldots<i_1<i_2<\ldots<i_p\leq n+1,$ $ k\in \intbra[1,n+1]$. 

\begin{align*}
\yacross{n+1}  &\Searrow \yacross{n+1}   \smallsetminus \{a_{i_p} \} \\
&\Searrow \yacross{n+1}   \smallsetminus \{a_{i_p}, a_{i_{p-1}} \} \\
& \Searrow \ldots \\
&\Searrow  \yacross{n+1}   \smallsetminus \{a_{i_p}, a_{i_{p-1}},\ldots, a_{i_1}\} \\
&=\yacross{n} .
\end{align*}
		
\begin{step} 
$\across{n+1} \Searrow  \across{n+1}   \smallsetminus \{a_{p} \}$: If the vertex $n+1$ is blue, then $i_p=n+1$ and $a_{i_p}=[L_{n+1}]$. This 0-simplex is contained in a unique maximal simplex, namely, $\ffan [n+1]$. So it is vertex-dominated by $a_i$ for any permissible $i\in \intbra$. Such an $i$ exists because because we are in the case where the initial surface $\mob n$ had a non-trivial bicoloring before the addition of the vertex $n+1$. 

Now suppose that the vertex $n+1$ is red. Then $i_p<n+1$ and it is blue. This is illustrated in Fig.\ref{fig: step1} for $n+1=4$ and $i_p=3$.  The arc $a_{i_p}$ decomposes the Möbius strip into an integral strip whose $x$-vertices are $n+1, 1,\ldots, i_p$ and $y$-vertices are $i_p, i_p+1,\ldots, n+1$. None of the 0-simplices of $\yacross{n+1}$ corresponds to an arc that has an endpoint on the $y$-vertex in $\{i_p+1,\ldots, i_p\}$ because all of them are rejected arcs, by the assumption that $i_p<n+1$. Then any arc that is disjoint from ${(i_p, n+1)}$ is disjoint from the arc $(1, i_p)$. So $a_{i_p}$ is vertex dominated by $(1,i_p)$. This concludes the base step.

Let $i_b\in \intbra[1,i_p]$ be the smallest integer such that the $i_b$-th vertex of $\mob {n+1}$ is blue. In Fig. \ref{fig: step1}, we have $i_b=1$. 
\end{step}
\begin{step} \label{inductionstep}
	Suppose that for $i'=n+1, n, \ldots, i+1$, the 0-simplex $a_{i'}$ is vertex-dominated in $\across{n+1}   \smallsetminus \{a_{n+1}, a_n, \ldots, a_{i'+1} \}.$ Then we need to show that the 0-simplex $[a_{i}]$ is vertex-dominated in $\across{n+1}   \smallsetminus \{a_{n+1}, a_n, \ldots, a_{i+1} \}.$ The arc $a_{i}$ decomposes the Möbius strip into an integral strip whose $x$-vertices are $n+1, 1,\ldots, i$ and $y$-vertices are $i, i+1,\ldots, n+1$. By induction hypothesis, none of the 0-simplices of $ \across{n+1}   \smallsetminus \{a_{n+1}, a_n, \ldots, a_{i} \}$ corresponds to an arc that has an endpoint on the $y$-vertex labeled $n+1$. So we have that
$$\Link{a_{i}  }{ \across{n+1}   \smallsetminus \{a_{n+1}, a_n, \ldots, a_{i+1} \}   } = \acs {i,n-i+2}\Join [(1, i)].$$ So $a_{i}$ is vertex-dominated by $[(1, i)]$. This finishes the induction on $i'$. By induction hypothesis on $n$, the complex $\across{n}$ is strongly collapsible, which implies that the complex $\across{n+1}$ is strongly collapsible. This finishes the induction on $n$. 
	\end{step}

\end{proof}

Then, we show that the full arc complex non-orientable crown is collapsible.
\begin{theorem}
For all $n\geq 1$, the simplicial complex $\ac{\mob n}$ simplicially collapses onto $\across n $.
\end{theorem}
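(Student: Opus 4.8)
My plan follows the template of the orientable case (Theorem~\ref{strcollhole}): filter the b-arcs of $\mob n$ by the size of the non-orientable piece they cut off, and delete them class by class, from the maximal arcs $M_i$ inward, until only simplices made of c-arcs---that is, of $\across n$---survive. For $1\le k\le n-1$ let $\mathcal B_k$ be the set of b-arcs of $\mob n$ whose non-orientable tile is a copy of $\mob k$ (so whose polygonal tile is $\poly{n-k+2}$); put $Y^0:=\ac{\mob n}$, and for $k=1,\dots,n-1$ let $Y^k$ be obtained from $Y^{k-1}$ by deleting every simplex containing a vertex of $\mathcal B_k$. Since the polygonal tile of any b-arc has between $3$ and $n+1$ marked points, every b-arc lies in exactly one $\mathcal B_k$, so $Y^{n-1}$ is the subcomplex spanned by the c-arcs, namely $\across n$ (for $n=1$ there are no b-arcs and nothing to do). It then remains to prove $Y^{k-1}\searrow Y^k$ for each $k$.

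Two routine observations reduce this to a single input. First, $\mathcal B_k$ is an \emph{independent set} of $\ac{\mob n}$: two distinct disjoint b-arcs $a,a'\in\mathcal B_k$ would have $a'$ lying in the non-orientable tile $\mob k$ of $a$ (it cannot lie in the polygonal tile, a disk), which forces the non-orientable tile of $a'$ to have fewer than $k$ marked points, contradicting $a'\in\mathcal B_k$. Hence deleting the star of one vertex of $\mathcal B_k$ leaves the stars of the others untouched, and $Y^{k-1}\searrow Y^k$ follows once I show $Y^{k-1}\searrow Y^{k-1}\smallsetminus\{[a]\}$ for a single $a\in\mathcal B_k$. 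Second, for such $a$ the arcs of $Y^{k-1}$ disjoint from $a$ are exactly the diagonals of its polygonal tile $\poly{n-k+2}$ together with the c-arcs of its non-orientable tile $\mob k$: a b-arc inside that $\mob k$ has a strictly smaller non-orientable tile, hence was already deleted, while a diagonal of the polygonal tile has a larger one and survives. Therefore
$$\Link{[a]}{Y^{k-1}}=\ac{\poly{n-k+2}}\Join\across{k}$$
(for $k=1$ this is $\ac{\poly{n+1}}\Join[L_i]$, a cone, recovering that $[M_i]$ is dominated by $[L_i]$; for $k=n-1$ the first factor is empty and the link is just $\across{n-1}$).

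I now invoke two facts: (i) if the link of a vertex $v$ in a complex $X$ is collapsible, then $X\searrow X\smallsetminus\{v\}$ (a standard property of simplicial collapses, cf.\ \cite{whitehead}); and (ii) a join $A\Join B$ with $B$ collapsible is itself collapsible, since $B\searrow\mathrm{pt}$ yields $A\Join B\searrow A\Join\mathrm{pt}$, a cone. Combining these with the link formula above, the step $Y^{k-1}\searrow Y^k$ reduces to a single assertion:
$$\across{k}\ \text{(the subcomplex of }\ac{\mob k}\text{ spanned by all c-arcs) is collapsible, for every }k\ge1.$$
This is the crux, and I expect it to be the main obstacle. It is a full-complex counterpart of Theorem~\ref{thm: strcoll}, which establishes strong collapsibility of the \emph{bicolored} inner arc complex, and I would prove it by the same induction on the number of marked points: on adding a vertex $n+1$ to $\mob n$, peel off the c-arcs incident to it in the order $L_{n+1},(n,n+1),(n-1,n+1),\dots,(1,n+1)$, checking---via the universal-cover (``integral strip'') analysis of Steps~1--2 in the proof of Theorem~\ref{thm: strcoll}---that each is vertex-dominated, in the partially collapsed complex, by a c-arc based at vertex~$1$; the base case $\across{1}$ is a single point. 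Granting this, $\ac{\mob n}=Y^0\searrow Y^1\searrow\dots\searrow Y^{n-1}=\across n$, which is the desired collapse.
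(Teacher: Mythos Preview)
The paper gives no proof of its own here: it simply writes ``The proof is identical to the one in \cite{ppstrong} in the undecorated case.'' Your argument is a natural fleshed-out version of that deferred proof, following the orientable template of Theorem~\ref{strcollhole}: filter the b-arcs by the size of their non-orientable tile, compute $\Link{[a]}{Y^{k-1}}=\ac{\poly{n-k+2}}\Join\across k$, and use ``link collapsible $\Rightarrow$ vertex deletable'' together with collapsibility of $\across k$. All of this is sound, and your sketched induction for the collapsibility of $\across k$ (the all-blue specialisation of the proof of Theorem~\ref{thm: strcoll}) does go through.

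One small slip to patch. In arguing that $\mathcal B_k$ is an independent set you write that a second arc $a'\in\mathcal B_k$ disjoint from $a$ ``cannot lie in the polygonal tile, a disk''. That is not literally true: a b-arc of $\mob n$ can perfectly well sit inside the polygonal tile $\poly{n-k+2}$ of $a$ as a diagonal. What actually fails in that case is that the non-orientable tile of $a'$ then \emph{contains} all of $\mob k$ and hence has strictly more than $k$ marked points, so $a'\notin\mathcal B_k$ for the opposite reason (too big rather than too small). You make exactly this observation two sentences later when justifying the link formula (``a diagonal of the polygonal tile has a larger one and survives''); you just need to invoke it here as well. With that one-line fix the argument is complete.
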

The proof is identical to the one in \cite{ppstrong} in the undecorated case.
\newpage 

\section{Three-dimensional example: the decorated 2-crown} \label{sec:app2}

In this final appendix we illustrate Theorem~\ref{thm:mainB} and Proposition~\ref{prop:dimofmac} in the case of a fully decorated 2-crown $\dholed 2$. 
This case was chosen because the corresponding arc complex $\mathcal{A}(\dholed 2)$ is 3-dimensional. 
Since the surface is fully decorated, its admissible cone is properly convex, i.e.\ bounded in an affine chart, which also makes it easier to draw realistically: it turns out to be a triangular prism.

Figure~\ref{fig:admdeco2gon} shows: 
\begin{itemize}
\item (Left panel): the surface $\dholed 2$, with some arcs color-coded by their orbits under the mapping class group $(\mathbb{Z}/2\mathbb{Z})^2$. There are 5 orbits.
\item (Middle panel): the associated bicolored disk and its diagonals, according to the conventions used in the proof of Proposition~\ref{prop:dimofmac}.
\item (Right panel): the projectivized admissible cone $\overline{\Lambda(m)}^+$, equal to the image of the arc complex $\mathcal{A}(\dholed 2)$ under the strip map $f$, where $m$ is a hyperbolic metric on the decorated 2-crown $\dholed 2$. Vertex colors match arcs of the left panel.
\end{itemize}

Figures~\ref{fig:facets}-\ref{fig:codim2}-\ref{fig:vertices} show respectively the faces, edges and vertices of $\overline{\Lambda(m)}^+$, using a similar convention, displaying the associated spread subsets in red (with blue interior, where applicable) in the left columns. 
The complexes in the right columns are all subcomplexes of the right panel of Figure~\ref{fig:admdeco2gon}, and we respect the color coding for the vertices.

\begin{figure}[H]
    \centering
    \includegraphics[width=\linewidth] {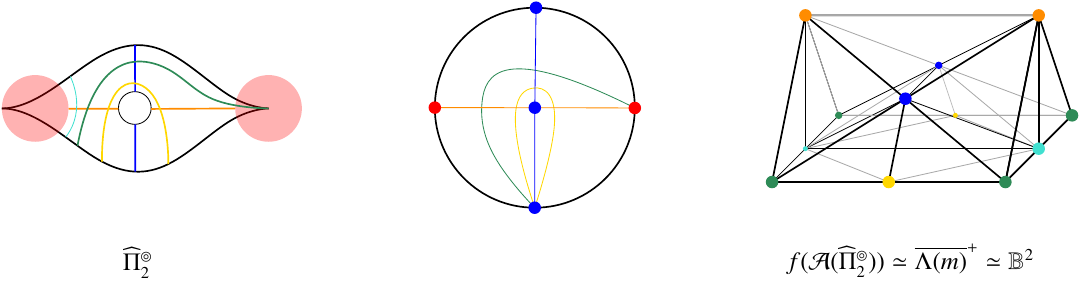}
    \caption{The admissible cone of $\dholed 2$.}
    \label{fig:admdeco2gon}
\end{figure}
\begin{figure}[H]
    \centering
    \includegraphics[height=0.8\textheight] {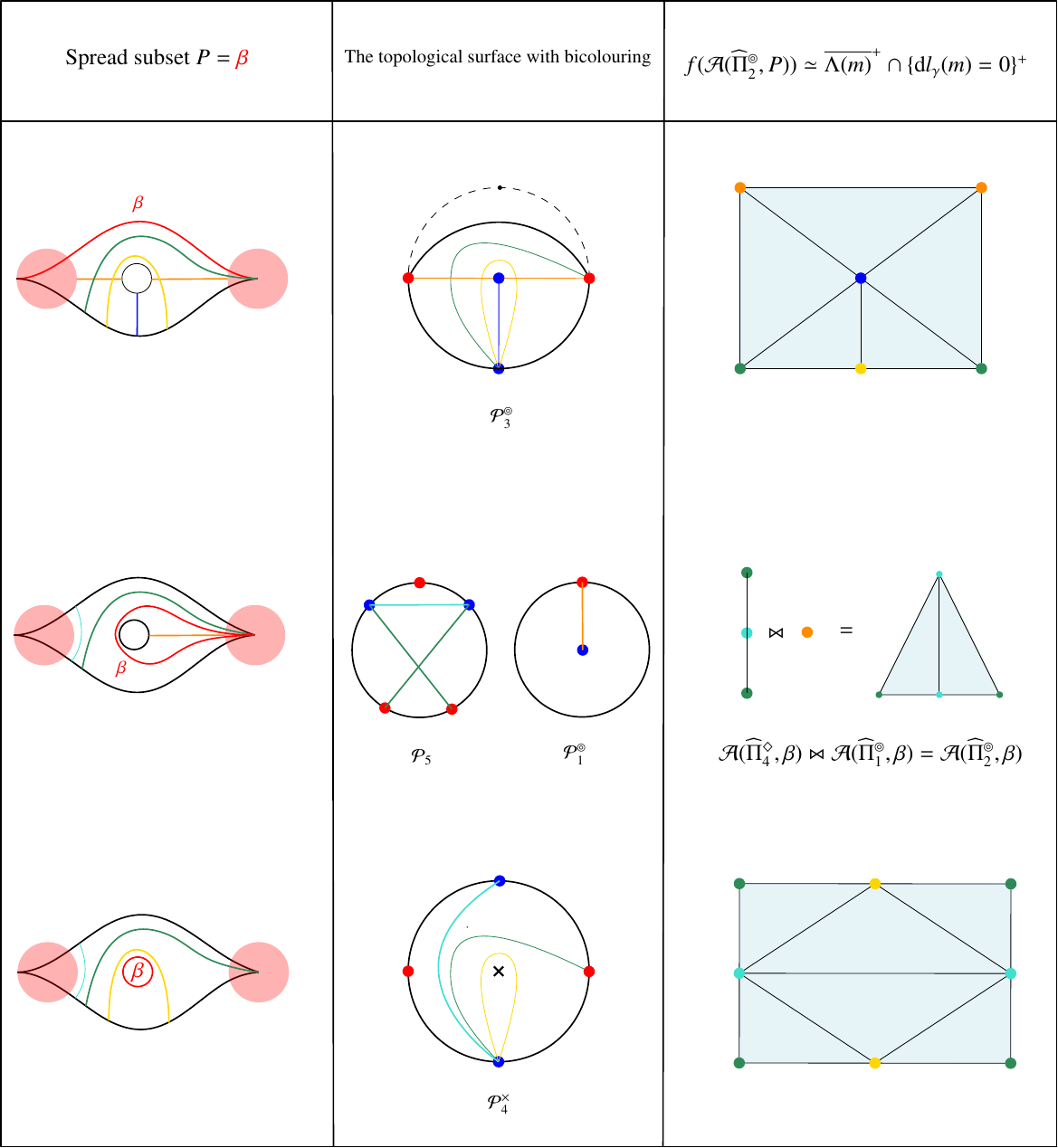}
    \caption{The facets of the admissible cone of $\dholed 2$.}
    \label{fig:facets}
\end{figure}
\begin{figure}[H]
    \centering
    \includegraphics[height=0.8\textheight] {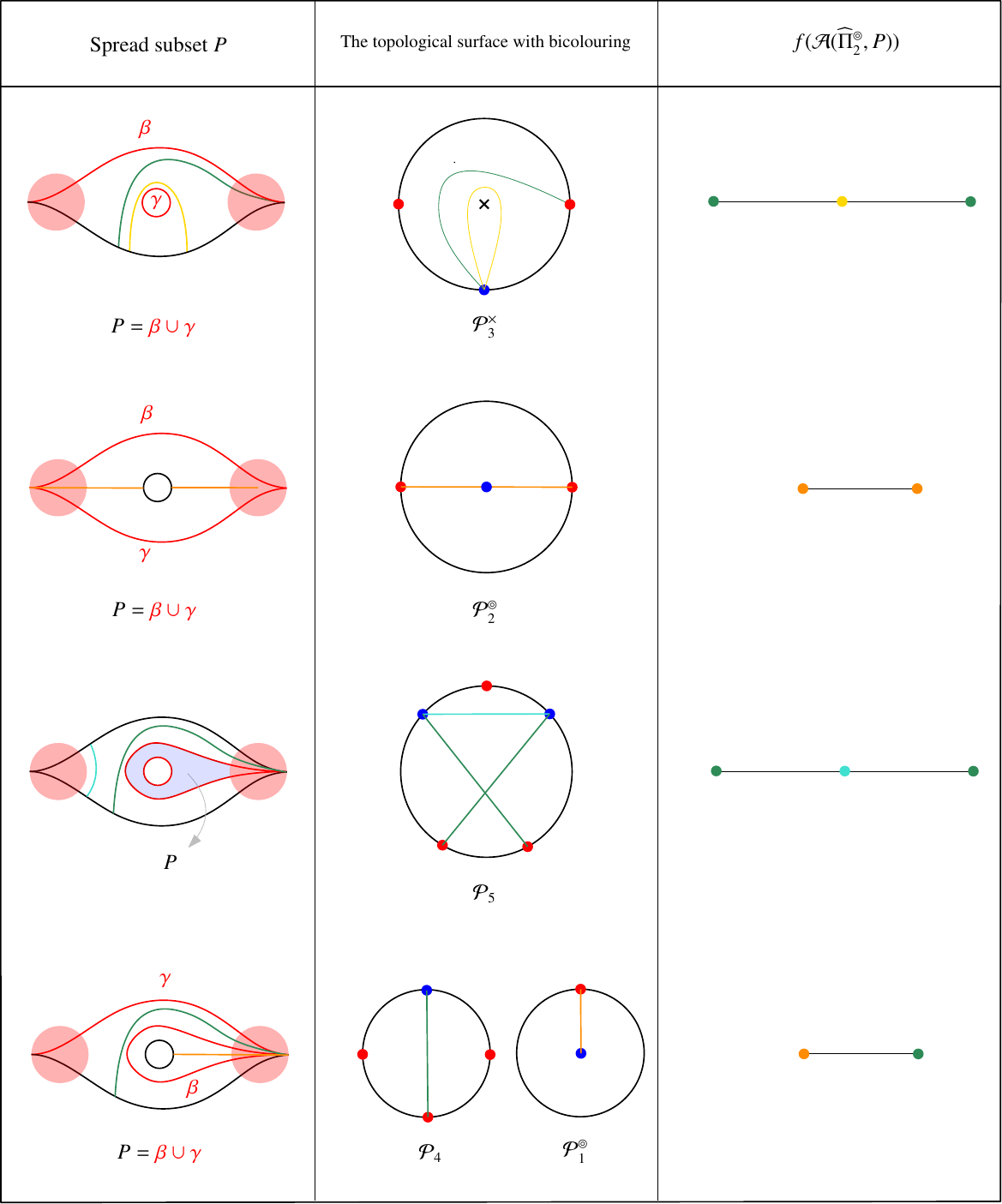}
    \caption{The codimension one faces of the admissible cone of $\dholed 2$.}
    \label{fig:codim2}
\end{figure}
\begin{figure}[H]
    \centering
    \includegraphics[width=0.7\linewidth]{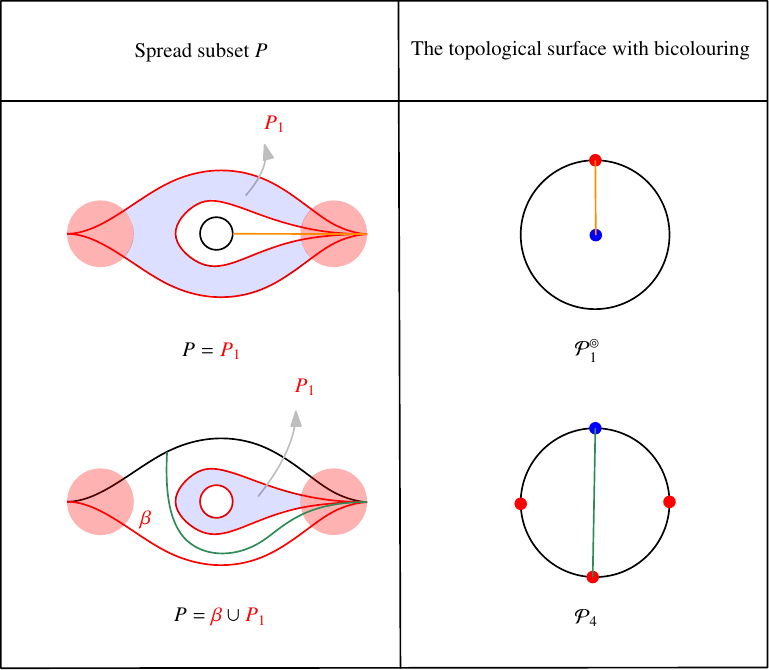}
    \caption{The vertices of the admissible cone of $\dholed 2$.}
    \label{fig:vertices}
\end{figure}
\end{appendices}

	\newpage

	\printbibliography
\end{document}